\documentclass[11pt]{amsart}

\usepackage[top = 1in, left = 1.25in, right = 1.5in, bottom = 1in]{geometry}

\usepackage{etex}
\usepackage{xcolor,colortbl}
\usepackage[all]{xy}
\usepackage{booktabs}
\usepackage{tikz}
\usetikzlibrary{matrix}
\usepackage{array}

\usepackage{amsrefs}
\usepackage{mathrsfs}
\usepackage{verbatim}
\usepackage{graphicx}

\def\be{\begin{equation}}
\def\en{\end{equation}}

\parskip=10pt

\newcommand{\htop}{h_{\text{\normalfont top}}}

\newtheorem{theorem}{Theorem}[section] % 1st argument is your name for it
\newtheorem{lemma}[theorem]{Lemma}     % 2nd argument is what is printed
\newtheorem{corollary}[theorem]{Corollary}
\newtheorem{proposition}[theorem]{Proposition}

\theoremstyle{definition}

\newtheorem*{ack*}{Acknowledgment}

\theoremstyle{remark}
\newtheorem{remark}[theorem]{Remark}

\numberwithin{equation}{section}

\title{Entropy on regular trees}
%\author{Karl Petersen}

%    author one information
\author{Karl Petersen}
\address{Department of Mathematics,
	CB 3250 Phillips Hall,
	University of North Carolina,
	Chapel Hill, NC 27599 USA}
%\curraddr{}
\email{petersen@math.unc.edu}
%\thanks{}

%    author two information
\author{Ibrahim Salama}
\address{School of Business, North Carolina Central University, Durham, NC 27707 USA}
\email{isalama@nccu.edu}
%\thanks{}

\date{\today}

\begin{document}
	
		\subjclass[2010]{37B10, 37B40, 54H20}
	\keywords{Tree shift, complexity function, entropy}

\begin{abstract}
	We show that the limit in our definition of tree shift topological entropy is actually the infimum, as is the case for both the topological and measure-theoretic entropies in the classical situation when the time parameter is $\mathbb Z$. 
	As a consequence, tree shift entropy becomes somewhat easier to work with. 
	For example, the statement that the topological entropy of a tree shift defined by a one-dimensional subshift dominates the topological entropy of the latter can now be extended from shifts of finite type to arbitrary subshifts. 
	Adapting to trees the strip method already used to approximate the hard square constant on $\mathbb Z^2$, we show that the entropy of the hard square tree shift on the regular $k$-tree {\em increases} with $k$, in contrast to the case of $\mathbb Z^k$. 
	We prove that the strip entropy approximations increase strictly to the entropy of the golden mean tree shift for $k=2,\dots,8$ and propose that this holds for all $k \geq 2$. 
	We study the dynamics of the map of the simplex that advances the vector of ratios of symbol counts as the width of the approximating strip is increased, providing a fairly complete description for the golden mean subshift on the $k$-tree for all $k$. 
	This map provides an efficient numerical method for approximating the entropies of tree shifts defined by nearest neighbor restrictions. 
	Finally, we show that counting configurations over certain other patterns besides the natural finite subtrees yields the same value of entropy for tree SFT's. 	
	\end{abstract}
	
\maketitle
\section{Introduction}
Entropy is a single number attached to a topological or measure-theoretic dynamical system that in a limited but precise way describes the complexity or richness of the system. 
In recent years increasing attention has been paid to the calculation of entropy for systems in which the ``time" is not $\mathbb Z$ nor $\mathbb R$, but perhaps $\mathbb Z^d$ for some $d \geq 2$, or an arbitrary amenable group, or even a free or arbitrary countable group. 
We will not attempt to review the extensive and rapidly developing literature here (nor the connections with information theory, statistical mechanics, and other areas), referring only to \cites{KerrLi2016, Bowen2017, HochmanMeyerovitch2010} for background and references. 

Aubrun and B\'eal \cites{AB1,AB2,AB3,AB4,AB5} proposed studying subshifts on trees, since for such systems the ``time" has both higher-dimensional and directional aspects, making them perhaps somehow between one- and higher-dimensional subshifts.
Steve Piantadosi \cite{Piantadosi2008} studied the topological entropy of the hard square model on free groups $\mathbb F_k$. He obtained an explicit formula in terms of a rapidly converging infinite series and used it to show numerically but rigorously that the entropy increases with $k$ for a range of $k$. 
Here we investigate some of these same questions for trees, with different methods but with some closely related results. 

In a previous paper \cite{PS2018} we gave a definition of entropy for tree shifts and showed that the limit in the definition exists. 
We proved that for a $2$-tree shift defined by nearest neighbor constraints, the tree-shift entropy dominates the entropy of the corresponding one-dimensional shift of finite type. 
We also provided estimates for the entropies of various $2$-tree shifts, especially the ones determined by the  ``golden mean" (or ``hard square" or ``hard core") condition that no two adjacent nodes have identical labels. 

One of our main results here (Theorem \ref{thm:lim=inf}) is that the limit in the definition is actually an infimum. As a corollary (Corollary \ref{cor:entcomp}) we show that the entropy comparison between a one-dimensional shift of finite type and the tree shift it defines holds for all subshifts. 
Then we adapt the ``strip method" used for lattice shifts \cites{Pavlov2012, MarcusPavlov2013} to study the entropy $h^{(k)}$ of the golden mean subshift on the regular $k$-tree. 
Generalizing and improving the result in \cite{PS2018}, we show in Theorem \ref{th:dimincrease} that $h^{(k)}$ is strictly increasing in $k$.
This contrasts with the apparent {\em decrease} of the entropy for the golden mean SFT's on $\mathbb Z^k$ for $k=1,2,3,4$ \cite{GamarnikKatz2009}. 
In Theorems \ref{th:striplim} and \ref{th:stripincrease} we show that for each fixed $k=2,\dots,8$ the strip entropies $h_n^{(k)}$ {increase strictly} to $h^{(k)}$. 
We believe that the statement holds for all $k \geq 2$. 
As in \cite{Piantadosi2008}, a related map of the interval (or, for the case of more general tree subshifts, simplex) appears as one considers ratios of symbol counts in the improving approximations. 
We produce a thorough analysis of this map for the case of golden mean restrictions (see Theorem \ref{thm:intervalmap}) and show in Section \ref{sec:general} how to use it to obtain rapidly converging approximations to the entropies of more general tree shifts.
Finally, we count configurations over extensions of the patterns that in this setting correspond to intervals in the one-dimensional case, showing in Corollary \ref{cor:entcomp} that for tree SFT's the resulting entropy is the same. 

Apparently the definition of entropy considered here and in \cites{Piantadosi2008,PS2018} is not the same as sofic entropy (see \cites{KerrLi2016,Bowen2017}), since the latter is a conjugacy invariant while the entropy considered here can increase under higher block codes.

\subsection{Notation and setup}\label{sec:setup}
Let $k \geq 2$ and let $\Sigma_k=\{0,1,\dots ,k-1\}$. The set $\Sigma_k^*$ of all finite words on the alphabet $\Sigma_k$ is the {\em $k$-tree}, which is naturally visualized as the Cayley graph of the free semigroup on $k$ generators. 
The empty word $\epsilon$ corresponds to the root of the tree and the neutral element of the semigroup. Let $A=\{0,1,\dots ,d-1\}$ be an alphabet on $d$ symbols. 
A {\em labeled tree} is a function $\tau : \Sigma_k^* \to A$. 
For $w \in \Sigma_k^*$, we think of $\tau(w)$ as the label attached to the node determined by $w$. 
For each $n \geq 0$ let $\Delta_n= \cup_{0 \leq i \leq n} \Sigma_k^i$ denote the initial height-$n$ subtree of the $k$-tree. 
The cardinality of $\Delta_n$ is $|\Delta_n|=1+k+ \dots +k^n$.
An {\em $n$-block} is a function $B: \Delta_n \to A$, which we think of as a labeling of $\Delta_n$ or a configuration on $\Delta_n$. 
We say that an $n$-block $B$ {\em appears} in a labeled tree $\tau$ if there is a node $x \in \Sigma_k^*$ such that $\tau(xw)=B(w)$ for all $w \in \Delta_n$. 
A {\em tree shift} $Z$ is the set of all labeled trees which omit all of a certain set (possibly infinite) of forbidden blocks. These are exactly the closed shift-invariant subsets of the full tree shift space $T(A)=A^{\Sigma_k^*}$. 
A tree shift $Z$ is called {\em transitive} if it contains a labeled tree $\tau$ such that for every $\xi \in Z$ every block that appears in $\xi$ also appears in $\tau$. 
Such a labeled tree $\tau$ is called a {\em transitive point}. 

The {\em complexity function} $p_\tau$ of the labeled tree $\tau$ assigns to each $n \geq 0$ the number of $n$-blocks that appear in $\tau$. 
The {\em complexity function} $p_Z(n)$ of a tree shift $Z$ gives for each $n \geq 0$ the number of $n$-blocks among all labeled trees in $Z$. 
We are interested in studying the complexity functions of trees that are labeled according to certain restrictions, in particular nearest-neighbor constraints specified by $d$-dimensional $0,1$ transition matrices.
 In \cite{PS2018} it was proved that for any labeled tree $\tau$ the limit
\be
h(\tau) = \lim_{n \to \infty} \frac{\log p_\tau (n)}{1+k+ \dots +k^n}
\en
exists. 
This limit is called the {\em topological entropy} of the labeled tree $\tau$. 
The topological entropy $h(Z)$ of a transitive tree shift $Z$ is defined to be the topological entropy of any of its transitive points.

\section{The limit in the definition of tree shift entropy is the infimum}
 
 \begin{theorem}\label{thm:lim=inf} 
 	The limit in the definition of tree shift topological entropy is actually the infimum:
 \be
 h(\tau) = \lim_{n \to \infty} \frac{\log p_\tau (n)}{1+k+ \dots +k^n}= 
 \inf \left\{\frac{\log p_\tau (n)}
 	{1+k+ \dots +k^n} : n \in \mathbb N\right\}.
 \en
 \end{theorem}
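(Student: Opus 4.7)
The approach is a Fekete-type argument: I will derive a subadditivity inequality from a natural decomposition of $\Delta_{m+n+1}$ and then combine it with the already-known existence of the limit to conclude.

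The decomposition writes $\Delta_{m+n+1}$ as the top subtree $\Delta_m$ (nodes at depth $\le m$) together with the $k^{m+1}$ disjoint subtrees rooted at the nodes of depth $m+1$, each isomorphic to $\Delta_n$. Any $(m+n+1)$-block $B$ appearing at a node $x$ of $\tau$ is uniquely determined by its restriction to the top (an $m$-block appearing at $x$) together with its restriction to each bottom subtree (an $n$-block appearing at the corresponding depth-$(m+1)$ descendant of $x$), so the assignment of $B$ to this tuple is injective and yields
\[
p_\tau(m+n+1) \le p_\tau(m) \cdot p_\tau(n)^{k^{m+1}}.
\]
The additive identity $|\Delta_{m+n+1}| = |\Delta_m| + k^{m+1}|\Delta_n|$ lets me rewrite this, after taking logarithms and dividing by $|\Delta_{m+n+1}|$, as the weighted-average bound
\[
\frac{\log p_\tau(m+n+1)}{|\Delta_{m+n+1}|} \le \alpha_m \cdot \frac{\log p_\tau(m)}{|\Delta_m|} + (1-\alpha_m) \cdot \frac{\log p_\tau(n)}{|\Delta_n|},
\]
where $\alpha_m = |\Delta_m|/|\Delta_{m+n+1}|$.

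To finish, I fix $n$ and let $m \to \infty$. A short computation using $|\Delta_m| \sim k^{m+1}/(k-1)$ gives $\alpha_m \to k^{-(n+1)} < 1$, and since $\log p_\tau(m)/|\Delta_m| \to h(\tau)$ by the theorem from \cite{PS2018}, passing to the limit in the weighted-average inequality produces
\[
h(\tau) \le k^{-(n+1)} h(\tau) + \bigl(1 - k^{-(n+1)}\bigr) \cdot \frac{\log p_\tau(n)}{|\Delta_n|},
\]
which rearranges to $h(\tau) \le \log p_\tau(n)/|\Delta_n|$. Since $n$ was arbitrary, $h(\tau) \le \inf_n \log p_\tau(n)/|\Delta_n|$, and the reverse inequality is automatic because the infimum of a real sequence is always at most its limit. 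I expect the only delicate point to be getting the decomposition count right: identifying that the exponent in the product bound is exactly $k^{m+1}$ (the number of depth-$(m+1)$ nodes) and matching it with the $k^{m+1}|\Delta_n|$ summand in $|\Delta_{m+n+1}|$ is precisely what makes the right-hand side of the logarithmic inequality a genuine convex combination and drives the whole argument.
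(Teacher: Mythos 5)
Your proof is correct and rests on the same key idea as the paper's: decompose a large initial subtree into disjoint translated copies of $\Delta_n$, use the resulting multiplicative bound on the number of blocks, and invoke the already-established existence of the limit to conclude $h(\tau)\leq \log p_\tau(n)/|\Delta_n|$ for every $n$. The only difference is bookkeeping: the paper tiles all of $\Delta_{j(n+1)-1}$ by copies of $\Delta_n$ and lets $j\to\infty$, obtaining the bound directly, whereas you keep a growing top $\Delta_m$ with a single layer of $\Delta_n$'s beneath it and let $m\to\infty$, absorbing the top's limiting contribution (weight $k^{-(n+1)}$ times $h(\tau)$) by a convex-combination rearrangement -- both steps are valid.
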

 \begin{proof}
 		 In the proof of the existence of the limit for $h$ in \cite{PS2018}, $\Delta_{jm}$ was decomposed into a union of shifts of $\Delta_m$. But these copies of $\Delta_m$ did not have independent entries; in fact they were not disjoint, since the last row of one formed the root vertices of the next ones. So here we improve the estimate 2.2 in \cite{PS2018} by making them disjoint. 

Fix $n \geq 1$ and consider $\Delta_n$. 
Its last row has $k^n$ entries. The next row has $k^{n+1}$ entries, and we use these as root vertices for new shifts of $\Delta_n$. This new row ends with $k^{n+1} k^n$ entries, so the next row has $k^{2n+2}$ entries, which we use as vertices of new shifts of $\Delta_n$. The last row now has $k^{2n+2} k^n=k^{3n+2}$ entries, the next row has $k^{3n+3}$ and each of these becomes the root of a new shift of $\Delta_n$. 

We have formed a $\Delta_{4(n+1)-1}$ out of $1 + k^{n+1} + k^{2(n+1)} + k^{3(n+1)}$ disjoint copies of $\Delta_n$. In general, for each $j \geq 1$ we have
\be
p(j(n+1)-1) \leq p(n)^{(k^{j(n+1)}-1)/(k^{n+1}-1)}.
\en
 (In formula 2.2 of \cite{PS2018} we find the same estimate, except with $\leq p(n)$ replaced by $\leq p(n+1)$. So this estimate is better.)

Then take logarithms, divide by $(k^{j(n+1)} -1)/(k-1)$, and
take the limit as $j \to \infty$, to find that
\be
 h \leq \frac{\log p(n)}{1+k+k^2+ \dots + k^n}. 
 \en
 Therefore 
 \be
 h=\inf_n \frac{\log p(n)} {1+k+k^2+\dots +k^n}.
 \en 
 \end{proof}

 A first consequence of this result is a generalization of Theorem 3.3 of \cite{PS2018} for $2$-trees from shifts of finite type to arbitrary subshifts. 
 Using the same argument and Theorem \ref{th:dimincrease} below, the statement extends to $k$-trees. 
 Let $M$ be a $d \times d$ matrix with entries from $\{0,1\}$. The matrix $M$ defines a {\em one-step shift of finite type} (SFT) $X_M$ on the alphabet $A$ and a {\em tree shift} $Z_M$ consisting of all $k$-trees labeled by $A$ with the property that for every $w=w_0 w_1 \dots w_j \in \Sigma_k^*$, $M_{w_iw_{i+1}}=1$ for all $0 \leq i < j$. 
 In \cite{PS2018}*{Theorem 3.3} it was also proved that $h(Z_M)=\sup\{h(\tau): \tau \in Z_M\}$ dominates the entropy $h(X_M)$ of the associated shift of finite type. 
 
 More generally, given any subshift $X \subset A^{\mathbb Z}$, there is a naturally associated tree shift $Z(X)$ defined as follows. 
 Denote by $\mathcal L(X)$ the {\em language} of $X$, namely the set of all finite words on $A$ found as subwords of sequences in $X$. 
 The $k$ shifts on $\Sigma_k^*$  are defined by $\sigma_s (w) = sw, w \in \Sigma_k^*, s\in \Sigma_k$. 
 For $s_1, s_2, \dots , s_m \in \Sigma_k$ and $s=s_1 s_2 \dots s_m \in \Sigma_k^*$, define 
 {$\sigma_s= \sigma_{s_1} \dots \sigma_{s_m}$.}
 On a labeled tree $\tau$, define $(\sigma_s \tau )(w) = \tau(\sigma_s w) =\tau(sw), s \in \Sigma_k^*$.
 
 We define the {\em one-dimensional language} of a tree shift $Z$ to be the set $\mathcal L^{(1)}(Z)$ of strings on the alphabet $A$ found along paths in the tree:
 \be
 \begin{gathered}
 \mathcal L^{(1)}(Z) = \epsilon \cup \{\tau (s): s \in \Sigma_k^*\} 
  \cup \{\tau (w) \tau({w s_1 }) \tau (w s_1 s_2) \dots \tau (w s_1 s_2 \dots s_m):\\
  m \in \mathbb N; w \in \Sigma_k^*; s_1,s_2,  \dots, s_m \in \Sigma_k\}.
 \end{gathered}
 \en
 Given a subshift $X \subset A^{\mathbb Z}$, we define the {\em tree shift associated to $X$} to be
 the unique tree shift $Z(X)$ such that
 \be
 \mathcal L^{(1)}(Z(X)) = \mathcal L(X).
 \en

 \begin{corollary}\label{cor:entcomp}
 	Let $X \subset A^{\mathbb Z}$ be a subshift on a finite alphabet, let $k = 2$, and let $Z(X)$ be the tree shift on the binary tree associated with $X$. Then
 	\be
 	\htop (X) \leq h(Z(X)).
 	\en
 
 \end{corollary}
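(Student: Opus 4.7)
The plan is a measure-theoretic argument: lift a shift-invariant measure of maximal entropy on $X$ to a Borel probability measure on $Z(X)$ by sampling each vertex label conditionally on its tree-ancestors, and use the Shannon inequality together with Theorem \ref{thm:lim=inf} to transfer the measure-theoretic entropy into a lower bound on the counting entropy $p_{Z(X)}(n)$.

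First, by the variational principle for symbolic systems, I would fix a shift-invariant Borel probability $\mu$ on $X$ with $h_\mu(X) = \htop(X)$, and let $\pi_d(\,\cdot\,|\,a_0,\ldots,a_{d-1})$ be a regular conditional distribution of $X_d$ given $X_0,\ldots,X_{d-1}$ under $\mu$. I would then define $\tilde{\mu}$ on $A^{\Sigma_k^*}$ by prescribing its finite-dimensional distributions on each $\Delta_n$: the root label is drawn from the one-dimensional marginal of $\mu$, and for each $\sigma \in \Delta_n$ at depth $d \geq 1$, conditional on the labels of its tree-ancestors at positions $\sigma|_0,\ldots,\sigma|_{d-1}$, the label of $\sigma$ is drawn from $\pi_d$. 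These prescriptions are consistent across nested $\Delta_n$, so Kolmogorov extension yields a Borel probability $\tilde{\mu}$ on $A^{\Sigma_k^*}$. Along any fixed infinite path from the root, the label sequence is distributed exactly as $\mu$ and hence lies in $X$ almost surely; intersecting over the countably many such paths gives $\tilde{\mu}(Z(X)) = 1$.

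Next, the chain rule along $\Delta_n$ gives
\be
H_n(\tilde{\mu}) \;=\; \sum_{d=0}^{n} k^d\, h_d(\mu), \qquad h_d(\mu) \;:=\; H_\mu(X_d \mid X_0,\ldots,X_{d-1}),
\en
with $h_0(\mu) = H_\mu(X_0)$. By the Kolmogorov--Sinai theorem, $h_d(\mu) \geq h_\mu(X)$ for every $d \geq 0$, so $H_n(\tilde{\mu}) \geq V_n\, h_\mu(X)$ with $V_n = 1 + k + \cdots + k^n$. Since Shannon entropy is bounded above by the logarithm of the support size, $\log p_{Z(X)}(n) \geq H_n(\tilde{\mu}) \geq V_n\, h_\mu(X)$ for every $n$. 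Invoking Theorem \ref{thm:lim=inf} to identify $h(Z(X))$ with the infimum of $\log p_{Z(X)}(n)/V_n$ over $n$, we conclude
\be
h(Z(X)) \;=\; \inf_{n}\frac{\log p_{Z(X)}(n)}{V_n} \;\geq\; h_\mu(X) \;=\; \htop(X).
\en

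The main obstacle will be the construction of $\tilde{\mu}$ when $\mu$ is not Markov: one must produce the regular conditional kernels $\pi_d$ (standard disintegration), verify their consistency along the tree in order to apply Kolmogorov extension, and confirm via the countable intersection over infinite rays that $\tilde{\mu}(Z(X)) = 1$. Once $\tilde{\mu}$ is in hand, the entropy decomposition is routine thanks to the conditional-independence of a vertex's label from its non-ancestors, and Theorem \ref{thm:lim=inf} neatly packages the per-$n$ inequality into a bound on $h(Z(X))$. The same argument, combined with Theorem \ref{th:dimincrease} below, extends the result to $k$-trees.
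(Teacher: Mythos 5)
Your proposal is correct in substance but takes a genuinely different route from the paper. The paper argues combinatorially: it writes $X$ as the decreasing intersection of the SFTs $X_r$ having the same $r$-blocks as $X$, recodes each $X_r$ to a one-step SFT $Y_r$ on the alphabet of $r$-blocks, applies the SFT case already proved in \cite{PS2018}, and then uses Theorem \ref{thm:lim=inf} to interchange $\inf_r$ with $\inf_n$, exploiting that $p_{Z(X_r)}(n)=p_{Z(X)}(n)$ for $r\geq n$. You instead invoke the variational principle (a measure of maximal entropy exists since subshifts are expansive), lift $\mu$ to a measure $\tilde\mu$ on the tree by sampling each node conditionally on its ancestors (the kernels are elementary here because the alphabet is finite), and bound $\log p_{Z(X)}(n)\geq H_n(\tilde\mu)\geq |\Delta_n|\,h_\mu(X)$ via the chain rule and the monotonicity $H_\mu(X_d\mid X_0,\dots,X_{d-1})\geq h_\mu(X)$. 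This buys a per-$n$ inequality valid for every $n$, so you do not actually need Theorem \ref{thm:lim=inf} (the limit defining $h$ already exists by \cite{PS2018}) nor the SFT theorem from \cite{PS2018}, and the argument is uniform in $k\geq 2$, which makes your closing appeal to Theorem \ref{th:dimincrease} unnecessary; the paper's route, in exchange, stays purely combinatorial and reuses its earlier results. One small repair: the binary tree has uncountably many infinite rays, so you cannot literally ``intersect over the countably many such paths''; instead observe that membership in $Z(X)$ is equivalent to the countably many finite conditions that each root-to-node word lie in $\mathcal L(X)$, and each such word is distributed according to $\mu$, hence is legal $\tilde\mu$-almost surely, giving $\tilde\mu(Z(X))=1$. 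Note also that your final step identifies $h(Z(X))$ with $\inf_n \log p_{Z(X)}(n)/|\Delta_n|$, which is exactly the convention the paper itself uses in the last line of its proof, so that step is consistent with the text.
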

\begin{proof}
 Given any subshift $X$, for each $r \in \mathbb N$ let $X_r$ be the shift of finite type which has the same $r$-blocks as $X$. Then $X$ is the decreasing intersection of the $X_r$. 
  Denote by $p_X$, $p_{X_r}$, and $p_Z$ the complexity functions of the subshifts $X$ and $X_r$ and the tree shift $Z$, respectively.
 Then for $r \geq n$ we have $p_{X_r}(n)=p_X(n)$, and similarly for $p_{Z(X_r)}$ and $p_{Z(X)}$. Thus 
 \be\label{eq:infs}
 \begin{aligned}
 \htop(X) &\leq \lim_{r \to \infty} \htop (X_r) = \inf_r \htop(X_r)\\
          &=\inf_r \inf_n \frac{1}{n} \log p_{X_r}(n) = \inf_n \inf_r \frac{1}{n} \log p_{X_r}(n)\\
          &= \inf_n \frac{1}{n} \log p_{X}(n) =\htop(X).
          \end{aligned}
 \en
 
In \cite{PS2018}*{Theorem 3.3} it was proved that if $X$ is a one-step SFT, then
\be
\htop (X) \leq h(Z(X)).
\en
Each $X_r$ is an $(r-1)$-step SFT and is topologically conjugate to a one-step SFT $Y_r$ on the alphabet $A^{(r)}(X)$ of $r$-blocks which appear in $X$. 
In a labeled tree in $Z(Y_r)$, we think of the {\em last} entry of the $r$-block labeling a node as being attached to that node. 
A labeling by elements of $A$ of the vertices of the $k$-tree is consistent with the restrictions from $X_r$ if and only if it is consistent with the restrictions from $Y_r$, so 
$p_{Z(X_r)}=p_{Z(Y_r)}$. 
Using (\ref{eq:infs}), \cite{PS2018}*{Theorem 3.3} applied to $Y_r$ and $Z(Y_r)$, 
and Theorem \ref{thm:lim=inf}, we then have 
 \be
 \begin{aligned}
 \htop(X) &= \inf_r \htop (X_r) = \inf_r \htop (Y_r) \leq \inf_r h(Z(Y_r)) =\inf_r h(Z(X_r))\\
          &= \inf_r \inf_n \frac{1}{n} \log p_{Z(X_r)}(n)  = \inf_n \frac{1}{n}\inf_r \log p_{Z(X_r)}(n) \\
          &= \inf_n \frac{1}{n} \log p_{Z(X)}(n) = h(Z(X)).
           	\end{aligned}
           \en
   \end{proof}        
 
 \begin{remark}
 	A similar statement applies to the entropy of an invariant measure $\mu$ on  a tree shift $Z$ and can be proved ny a similar argument. 
 	Denote by $\alpha_n$ the partition of $Z$ according to all possible labelings of the nodes of $\Delta_n$ 
 	by elements of the alphabet $A$. Then
 	\be
 	\begin{aligned}
 	h_\mu (Z) &= \lim_{n \to \infty} \frac{1}{|\Delta_n|} H_\mu(\alpha_n) = \lim_{n \to \infty} \frac{1}{|\Delta_n|} -\sum_{\Lambda \in \alpha_n} \mu(\Lambda) \log \mu (\Lambda)\\
    &=\inf_n \frac{1}{|\Delta_n|} H_\mu(\alpha_n).
    \end{aligned}
\en
    \end{remark}

\section{Strict increase with dimension}
	We use the strip method for the golden mean shift of finite type on the $k$-tree, $k \geq 2$, to show that the entropy $h^{(k)}$ is strictly increasing in $k$.

For each $n=0,1,2,\dots$ we define a $1$-dimensional SFT $\Sigma_n^{(k)}$ whose alphabet consists of the legal (no adjacent $1$'s) labelings of {the subtree with $k^n$ nodes consisting of a vertex with $\Delta_{n-1}^{(k)}$ attached.}
 ($\Delta_{-1}^{(k)}=\emptyset$.)
{$\Sigma_0^{(k)}$ consists of allowed labelings of the left edge $0^*$ (see Section \ref{sec:setup}) of the $k$-tree and is conjugate to the ordinary one-dimensional golden mean SFT. 
	$\Sigma_1^{(k)}$ is the one-dimensional SFT formed by the strip of adjacent pairs of vertices $0^i,0^i1$ down the left edge. 
	For $k=2$, thinking of the possible labels on vertices as $0$ and $1$, we may represent it as the set of all one-sided sequences on an alphabet $\{a=00,b=01,c=10\}$, with the restriction that the block $cc$ does not occur.
	The alphabet for $\Sigma_2^{(2)}$ consists of the allowed labelings of a $4$-vertex tree as in Figure \ref{fig:2strip}, etc.} 
	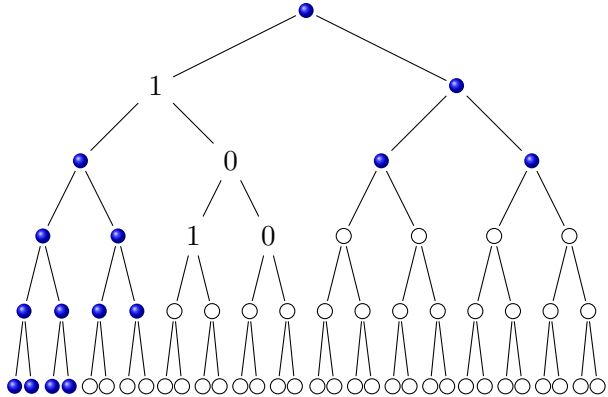
\begin{figure}[h]
				\begin{tikzpicture}[scale=1]
			\node at (0,0) (v) {};
			\shade [ball color=blue] (v) circle (0.1);
			%\draw  (v) circle(0.1);
			\node at (-2,-1) (v0) {1};
			%\shade [ball color=blue]  (v0) circle (0.1);
			\node at (2,-1)  (v1) {};
			\shade [ball color=blue]  (v1) circle (0.1);
			\node at (-3,-2) (v00) {};
			\shade [ball color=blue]  (v00) circle (0.1);
			\node at (-1,-2) (v01) {0};
			%\shade [ball color=blue]  (v01) circle (0.1);
			\node at (1,-2) (v10) {};
			\shade [ball color=blue]  (v10) circle (0.1);
			\node at (3,-2) (v11) {};
			\shade [ball color=blue]  (v11) circle (0.1);
			\node at (-3.5,-3) (v000) {};
			\shade [ball color=blue]  (v000) circle (0.1);
			\node at (-2.5,-3) (v001) {};
			\shade [ball color=blue]  (v001) circle (0.1);
			\node at (-1.5,-3) (v010) {1};
			%\shade [ball color=blue]  (v010) circle (0.1);
			\node at (-.5,-3) (v011) {0};
			%\shade [ball color=blue]  (v011) circle (0.1);
			\node at (.5,-3) (v100) {};
			\draw  (v100) circle (0.1);
			\node at (1.5,-3) (v101) {};
			\draw  (v101) circle (0.1);
			\node at (2.5,-3) (v110) {};
			\draw  (v110) circle (0.1);
			\node at (3.5,-3) (v111) {};
			\draw  (v111) circle (0.1);
			%Left half of last row
			\node at (-3.75,-4) (v0000) {};
			\shade [ball color=blue]  (v0000) circle (0.1);
			\node at (-3.25,-4) (v0001) {};
			\shade [ball color=blue]  (v0001) circle (0.1);
			\node at (-2.75,-4) (v0010) {};
			\shade [ball color=blue]  (v0010) circle (0.1);
			\node at (-2.25,-4) (v0011) {};
			\shade [ball color=blue]  (v0011) circle (0.1);
			\node at (-1.75,-4) (v0100) {};
			\draw  (v0100) circle (0.1);
			\node at (-1.25,-4) (v0101) {};
			\draw  (v0101) circle (0.1);
			\node at (-.75,-4) (v0110) {};
			\draw  (v0110) circle (0.1);
			\node at (-.25,-4) (v0111) {};
			\draw  (v0111) circle (0.1);
			
			\node at (.25,-4) (v1000) {};
			\draw  (v1000) circle (0.1);
			\node at (.75,-4) (v1001) {};
			\draw  (v1001) circle (0.1);
			
			\node at (1.25,-4) (v1010) {};
			\draw  (v1010) circle (0.1);
			\node at (1.75,-4) (v1011) {};
			\draw  (v1011) circle (0.1);
			
			\node at (2.25,-4) (v1100) {};
			\draw  (v1100) circle (0.1);
			\node at (2.75,-4) (v1101) {};
			\draw  (v1101) circle (0.1);
			
			\node at (3.25,-4) (v1110) {};
			\draw  (v1110) circle (0.1);
			\node at (3.75,-4) (v1111) {};
			\draw  (v1111) circle (0.1);
			
			\node at (-3.875,-5) (v00000) {};
			\shade [ball color=blue]  (v00000) circle (0.1);
			\node at (-3.65,-5) (v00001) {};
			\shade [ball color=blue]  (v00001) circle (0.1);
			
			\node at (-3.375,-5) (v00010) {};
			\shade [ball color=blue]  (v00010) circle (0.1);
			\node at (-3.15,-5) (v00011) {};
			\shade [ball color=blue]  (v00011) circle (0.1);
			
			\node at (-2.875,-5) (v00100) {};
			\draw  (v00100) circle (0.1);
			\node at (-2.65,-5) (v00101) {};
			\draw  (v00101) circle (0.1);
			
			\node at (-2.375,-5) (v00110) {};
			\draw  (v00110) circle (0.1);
			\node at (-2.125,-5) (v00111) {};
			\draw  (v00111) circle (0.1);
			
			\node at (-1.875,-5) (v01000) {};
			\draw  (v01000) circle (0.1);
			\node at (-1.65,-5) (v01001) {};
			\draw  (v01001) circle (0.1);
			
			\node at (-1.375,-5) (v01010) {};
			\draw  (v01010) circle (0.1);
			\node at (-1.15,-5) (v01011) {};
			\draw  (v01011) circle (0.1);
			
			\node at (-.875,-5) (v01100) {};
			\draw  (v01100) circle (0.1);
			\node at (-.65,-5) (v01101) {};
			\draw  (v01101) circle (0.1);
			
			\node at (-.375,-5) (v01110) {};
			\draw  (v01110) circle (0.1);
			\node at (-.15,-5) (v01111) {};
			\draw  (v01111) circle (0.1);

			\draw (v) -- (v0);
			\draw (v) -- (v1);
			\draw (v0) -- (v00);
			\draw (v0) -- (v01);
			\draw (v1) -- (v10);
			\draw (v1) -- (v11);
			\draw (v00) -- (v000);
			\draw (v00) -- (v001);
			\draw (v01) -- (v010);
			\draw (v01) -- (v011);
			\draw (v10) -- (v100);
			\draw (v10) -- (v101);
			\draw (v11) -- (v110);
			\draw (v11) -- (v111);
			%To last row
			\draw (v000) -- (v0000);
			\draw (v000) -- (v0001);
			\draw (v001) -- (v0010);
			\draw (v001) -- (v0011);
			\draw (v010) -- (v0100);
			\draw (v010) -- (v0101);
			\draw (v011) -- (v0110);
			\draw (v011) -- (v0111);
			
			\draw (v100) -- (v1000);
			\draw (v100) -- (v1001);
			
			\draw (v101) -- (v1010);
			\draw (v101) -- (v1011);
			
			\draw (v110) -- (v1100);
			\draw (v110) -- (v1101);
			
			\draw (v111) -- (v1110);
			\draw (v111) -- (v1111);
			
			\draw (v0000) -- (v00000);
			\draw (v0000) -- (v00001);
			
			\draw (v0001) -- (v00010);
			\draw (v0001) -- (v00011);
			
			\draw (v0010) -- (v00100);
			\draw (v0010) -- (v00101);
			\draw (v0011) -- (v00110);
			\draw (v0011) -- (v00111);
			\draw (v0100) -- (v01000);
			\draw (v0100) -- (v01001);
			\draw (v0101) -- (v01010);
			\draw (v0101) -- (v01011);
			\draw (v0110) -- (v01100);
			\draw (v0110) -- (v01101);
			\draw (v0111) -- (v01110);
			\draw (v0111) -- (v01111);
			
			%Right half of last row:
			
			\node at (.15,-5) (v10000) {};
			\draw  (v10000) circle (0.1);
			\node at (.375,-5) (v10001) {};
			\draw  (v10001) circle (0.1);
			
			\node at (.65,-5) (v10010) {};
			\draw  (v10010) circle (0.1);
			\node at (.875,-5) (v10011) {};
			\draw  (v10011) circle (0.1);
			
			\node at (1.15,-5) (v10100) {};
			\draw  (v10100) circle (0.1);
			\node at (1.375,-5) (v10101) {};
			\draw  (v10101) circle (0.1);
			
			\node at (1.65,-5) (v10110) {};
			\draw  (v10110) circle (0.1);
			\node at (1.875,-5) (v10111) {};
			\draw  (v10111) circle (0.1);
			
			\node at (2.125,-5) (v11000) {};
			\draw  (v11000) circle (0.1);
			\node at (2.375,-5) (v11001) {};
			\draw  (v11001) circle (0.1);
			
			\node at (2.65,-5) (v11010) {};
			\draw  (v11010) circle (0.1);
			\node at (2.875,-5) (v11011) {};
			\draw  (v11011) circle (0.1);
			
			\node at (3.15,-5) (v11100) {};
			\draw  (v11100) circle (0.1);
			\node at (3.375,-5) (v11101) {};
			\draw  (v11101) circle (0.1);
			
			\node at (3.65,-5) (v11110) {};
			\draw  (v11110) circle (0.1);
			\node at (3.875,-5) (v11111) {};
			\draw  (v11111) circle (0.1);

			\draw (v1000) -- (v10000);
			\draw (v1000) -- (v10001);
			
			\draw (v1001) -- (v10010);
			\draw (v1001) -- (v10011);
			
			\draw (v1010) -- (v10100);
			\draw (v1010) -- (v10101);
			\draw (v1011) -- (v10110);
			\draw (v1011) -- (v10111);
			\draw (v1100) -- (v11000);
			\draw (v1100) -- (v11001);
			\draw (v1101) -- (v11010);
			\draw (v1101) -- (v11011);
			\draw (v1110) -- (v11100);
			\draw (v1110) -- (v11101);
			\draw (v1111) -- (v11110);
			\draw (v1111) -- (v11111);
						\end{tikzpicture}
		\caption{$\Sigma_2^{(2)}$}
		\label{fig:2strip}
	\end{figure}

{Denote by $h(\Sigma_n^{(k)})$ the topological entropy of the one-dimensional SFT $\Sigma_n^{(k)}$.} 
{Recall that the alphabet of $\Sigma_n^{(k)}$ consists of allowed labelings of $k^n$ sites.}
We will show that the {\em site specific entropies} $h_n^{(k)}=h(\Sigma_n^{(k)})/k^n$ of the strip SFT's $\Sigma_n^{(k)}$ have limit $h^{(k)}$=the entropy of the golden mean labeled $k$-tree. 
Moreover, with the help of computer algebra, we can prove that for each $k=2,\dots,8$ the $h_n^{(k)}$ are {\em strictly increasing}, and we believe that this holds for all $k \geq 2$. 

For each $n=0,1,\dots$, denote by $B_n^{(k)}$ the number of different labelings of $\Delta_n^{(k)}$ by the alphabet $\{ 0,1\}$, by $B_n^{(k)}(0)$ the number of such labelings that have $0$ at the root, and by $r_n^{(k)}=r_n^{(k)}(0)$ the ratio $B_n^{(k)}(0)/B_n^{(k)}$.

While $k$ is fixed, we will suppress the exponents ${(k)}$. 

The labeling counts satisfy the recursions
\be
\begin{gathered}
B_0(0)=B_0(1)=1, B_0=2, r_0=1/2, \\
B_{n+1}(0)=[B_n(0)+B_n(1)]^k=B_n^k,B_{n+1}(1)=(B_n(0))^k.
\end{gathered}
\en
{We define $B_{-1}(0)=1, B_{-1}(1)=0$, so that $B_{-1}=1$.}
{
The labels at the nodes $i$ on the left edge respect the SFT restriction no $11$.
Let $a_i(0)$ denote the number of ways to label the first $i$ levels of the strip of width $n \geq 0$ with $0$ labeling node $i$, and define $a_i(1)$ similarly.
For $n \geq 1$ the $\Delta_{n-1}$ attached to node $i+1$ can be labeled in $B_{n-1}$ ways if node $i+1$ has label $0$, and in $B_{n-1}(0)$ ways if node $i+1$ has label $1$.}
Therefore
\be
\begin{bmatrix} 
a_{i+1}(0) \\
a_{i+1}(1) 
\end{bmatrix}
=
\begin{bmatrix} 
B_{n-1}^{k-1} & B_{n-1}^{k-1} \\
B_{n-1}(0)^{k-1} & 0 
\end{bmatrix} 
\begin{bmatrix} 
a_{i}(0) \\
a_{i}(1) 
\end{bmatrix} .
\en
{Thus for $n \geq 0$ the number $a_i(0)+a_i(1)$ of $i$-blocks in $\Sigma_n^{(k)}$ has the same asymptotic growth rate as the entries of $i$'th powers of the matrix}
\be
\begin{bmatrix} 
	B_{n-1}^{k-1} & B_{n-1}^{k-1} \\
	B_{n-1}(0)^{k-1} & 0 
\end{bmatrix} .
\en
The latter is given by powers of the Perron-Frobenius eigenvalue $\lambda_{n-1}$, which is the maximum root of the characteristic equation
\be
{f_{n-1}(\lambda)= \lambda^2 - B_{n-1}^{k-1} \lambda - [B_{n-1} B_{n-1}(0)]^{k-1}}, 
\en
namely
\be
{\lambda_{n-1}= \frac{B_{n-1}^{k-1}+\sqrt{B_{n-1}^{2k-2}+4[B_{n-1}B_{n-1}(0)]^{k-1}}}{2} = B_{n-1}^{k-1} c_{n-1},}
\en
with
\be
{c_n=(1+\sqrt{1+4r_{n}(0)^{k-1}})/{2}.}
\en
Thus
{$c_n=\lambda_n/B_{n}^{k-1}=c_n(r)$ is the largest root of $x^2-x-r_{n}^{k-1}$
 and satisfies $c_n^2-c_n=r_{n}^{k-1}$.}
 
 	We have 
 	\be
 	{h_n^{(k)}=\frac{\log \lambda_{n-1}}{k^n}.}
 	\en
 	Note that all $h_n^{(k)} \leq \log k$ because $B_n \leq k^{k^n}$. 

\begin{theorem}\label{th:striplim}
	For each fixed $k=2,3,\dots$ the site specific strip approximation entropies $h_n^{(k)}$ converge to the entropy $h^{(k)}$ of the golden mean SFT on the $k$-tree.
\end{theorem}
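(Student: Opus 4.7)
The plan is to unpack the formula $h_n^{(k)} = \log\lambda_{n-1}/k^n$ via the factorization $\lambda_{n-1} = B_{n-1}^{k-1} c_{n-1}$ already exhibited in the text, observe that $c_{n-1}$ contributes negligibly after dividing by $k^n$, and recognize the surviving piece as a multiple of $\log B_{n-1}/|\Delta_{n-1}|$, which converges to $h^{(k)}$ by Theorem \ref{thm:lim=inf}.

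First I would take logarithms to split
\[
h_n^{(k)} \;=\; \frac{(k-1)\log B_{n-1}}{k^n} \;+\; \frac{\log c_{n-1}}{k^n}.
\]
Since $r_{n-1} \in (0,1]$ and $c_{n-1}$ is the larger root of $x^2 - x - r_{n-1}^{k-1} = 0$, a direct computation gives $1 < c_{n-1} \leq (1+\sqrt{5})/2$, so $\log c_{n-1}$ is uniformly bounded in $n$ and the second summand tends to $0$. Using $|\Delta_{n-1}| = (k^n-1)/(k-1)$, I would then rewrite the remaining summand as
\[
\frac{(k-1)\log B_{n-1}}{k^n} \;=\; \frac{\log B_{n-1}}{|\Delta_{n-1}|}\,\Bigl(1 - \frac{1}{k^n}\Bigr),
\]
reducing the theorem to showing $\log B_n/|\Delta_n| \to h^{(k)}$.

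For this last step I would pick a transitive point $\tau$ of the golden mean tree shift $Z^{(k)}$; such a $\tau$ exists because every legal labeling of a finite initial subtree extends to a legal labeling of the whole $k$-tree by padding with $0$'s, so a standard enumeration produces a tree in which every allowed block occurs. By transitivity $p_\tau(n) = p_{Z^{(k)}}(n) = B_n$, and Theorem \ref{thm:lim=inf} applied to $\tau$ gives $\log B_n/|\Delta_n| \to h(\tau) = h^{(k)}$. Assembling the three pieces then yields $h_n^{(k)} \to h^{(k)}$. The argument is essentially bookkeeping once the Perron--Frobenius eigenvalue has been factored; the only mild subtleties are the uniform bound on $c_{n-1}$ and the identification $p_\tau(n) = B_n$, both of which are routine, so there is no genuine obstacle here.
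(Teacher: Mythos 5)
Your proposal is correct and follows essentially the same route as the paper: factor $\lambda_{n-1}=B_{n-1}^{k-1}c_{n-1}$, discard the bounded $\log c_{n-1}$ term after dividing by $k^n$, and identify the remaining term with $\log B_{n-1}/|\Delta_{n-1}|\to h^{(k)}$. The extra details you supply (the explicit bound $1<c_{n-1}\le(1+\sqrt5)/2$ and the padding-by-zeros construction of a transitive point giving $p_\tau(n)=B_n$) are correct fillings-in of steps the paper leaves implicit.
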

\begin{proof}
Since $c_n$ is bounded,  
\be 
\begin{aligned}
\lim h_n^{(k)} &= \lim \frac{\log \lambda_{n-1}}{k^n} 
= \lim \frac{\log B_{n-1}^{k-1}}{k^n} + \lim \frac{\log c_{n-1}}{k^n} \\
&= \lim \frac {(k-1)\log B_{n-1}}{k^n-1} = h^{(k)}.
\end{aligned}
		\en
\end{proof}

We develop for golden mean tree shifts the analogue of Piantadosi's \cite{Piantadosi2008} infinite series formula for the entropy of the golden mean SFT on a free group. 
Then we use the formula to prove that the entropy $h^{(k)}$ of the golden mean SFT on the $k$-tree is strictly increasing in $k$, for all $k \geq 2$. 
(Piantadosi used his infinite series formula and rigorous numerical estimates to prove the strict increase for a range of $k$.) 
Our infinite series formula follows from two easy lemmas.

\begin{lemma}
\be
h_{n+1}^{(k)}=h_n^{(k)}+\frac{1}{k^{n+1}}\log(1+r_{n-1}^k)^{k-1}+\frac{1}
{k^{n+1}}\log(c_{n})-\frac{1}{k^n}\log(c_{n-1}).
\en
\end{lemma}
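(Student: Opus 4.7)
The plan is to unfold the defining formula $h_n^{(k)} = \log \lambda_{n-1}/k^n$ together with the factorization $\lambda_{n-1} = B_{n-1}^{k-1} c_{n-1}$ at the two consecutive indices $n$ and $n+1$, and then use the recursion for $B_n$ coming from the tree structure to rewrite $\log B_n$ in terms of $\log B_{n-1}$.

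First I would record the basic recursion. Since $B_{n+1}(0) = B_n^k$ and $B_{n+1}(1) = B_n(0)^k$, we have
\begin{equation*}
B_n = B_{n-1}^k + B_{n-1}(0)^k = B_{n-1}^k \bigl(1 + r_{n-1}^k\bigr),
\end{equation*}
where the second equality uses $r_{n-1} = B_{n-1}(0)/B_{n-1}$. Taking logarithms and multiplying by $k-1$ yields
\begin{equation*}
(k-1)\log B_n = k(k-1)\log B_{n-1} + \log(1+r_{n-1}^k)^{k-1}.
\end{equation*}

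Next I would expand both sides of the desired identity. By definition,
\begin{equation*}
h_{n+1}^{(k)} = \frac{\log \lambda_n}{k^{n+1}} = \frac{(k-1)\log B_n + \log c_n}{k^{n+1}},
\end{equation*}
and similarly $h_n^{(k)} = \bigl[(k-1)\log B_{n-1} + \log c_{n-1}\bigr]/k^n$. Substituting the formula for $(k-1)\log B_n$ into the expression for $h_{n+1}^{(k)}$ gives
\begin{equation*}
h_{n+1}^{(k)} = \frac{(k-1)\log B_{n-1}}{k^n} + \frac{\log(1+r_{n-1}^k)^{k-1}}{k^{n+1}} + \frac{\log c_n}{k^{n+1}}.
\end{equation*}
Finally, I would replace $(k-1)\log B_{n-1}/k^n$ by $h_n^{(k)} - \log c_{n-1}/k^n$, which is just the definition of $h_n^{(k)}$ rearranged. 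The result is exactly the claimed identity.

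There is no real obstacle here: the lemma is a bookkeeping identity that follows directly from the two ingredients already established, namely the multiplicative recursion for $B_n$ coming from attaching $k$ copies of $\Delta_{n-1}$ under the root and the factorization $\lambda_n = B_n^{k-1} c_n$ of the Perron--Frobenius eigenvalue. The only thing to be careful about is the index shift between $n$ and $n-1$ in the subscripts of $B$, $r$, and $c$, so I would double-check that the factor $r_{n-1}^k$ (rather than $r_n^k$) appears correctly, which is consistent with the fact that $B_n/B_{n-1}^k = 1 + r_{n-1}^k$.
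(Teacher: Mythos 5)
Your proof is correct and is essentially the paper's own argument: both rest on the recursion $B_n=B_{n-1}^k(1+r_{n-1}^k)$ and the factorization $\lambda_n=B_n^{k-1}c_n$, with the paper carrying out the same bookkeeping multiplicatively inside the logarithm (inserting and removing a factor $c_{n-1}^k$) while you do it additively after taking logs. The index check you flag ($r_{n-1}^k$ rather than $r_n^k$) is exactly right.
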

\begin{proof}
\be
\begin{aligned}
h_{n+1} & =  \frac{1}{k^{n+1}}\log(\lambda_{n})  =  \frac{1}{k^{n+1}}\log(B_{n}^{k-1}c_{n})  \\
& =  \frac{1}{k^{n+1}}\log[B_{n-1}^{k(k-1)}(1+r_{n-1}^k)^{k-1}c_{n}]  \\
& =  \frac{1}{k^{n+1}}\log[B_{n-1}^{k(k-1)}c_{n-1}^{{k}}\frac{(1+r_{n-1}^k)^{k-1}c_{n}}
{c_{n-1}^k}]  \\
& =  \frac{1}{k^n}\log(B_{n-1}^{k-1}c_{n-1})+\frac{1}{k^{n+1}}\log(1+r_{n-1}^k)^{k-1}
+\frac{1}{k^{n+1}}\log(c_{n})-\frac{1}{k^n}\log(c_{n-1})  \\
& =  h_n^{(k)}+\frac{1}{k^{n+1}}\log(1+r_{n-1}^k)^{k-1}+\frac{1}{k^{n+1}}
\log(c_{n})-\frac{1}{k^n}\log(c_{n-1}). 
\end{aligned}
\en
\end{proof}

Note
\be
\begin{aligned}
h_1^{(k)} & =  \frac{1}{k}\log(B_0^{k-1}c_0)  \\
& =  \frac{1}{k}\log(2^{k-1})+\frac{1}{k}\log(c_0)  \\
& =  \frac{k-1}{k}\log 2+\frac{1}{k}\log(c_0) 
\end{aligned}
\en
and
\be
\begin{aligned}
h_2^{(k)} & =  h_1^{(k)}+\frac{1}{k^2}\log(1+r_0^k)^{k-1}+\frac{1}{k^2}
\log(c_1)-\frac{1}{k}\log(c_0)  \\
& =  \frac{k-1}{k}\log 2+\frac{1}{k}\log(c_0)+\frac{1}{k^2}
\log(1+r_0^k)^{k-1}+\frac{1}{k^2}\log(c_1)-\frac{1}{k}\log(c_0)  \\
& =  \frac{k-1}{k}\log 2+\frac{1}{k^2}\log(1+r_0^k)^{k-1}+\frac{1}{k^2}
\log(c_1). 
\end{aligned}
\en

\begin{lemma}
\be
h_n^{(k)}=\frac{k-1}{k}\log 2+\sum_{i=1}^{n-1}\frac{1}{k^{i+1}}
\log(1+r_{i-1}^k)^{k-1}+\frac{1}{k^n}\log(c_{n-1}).
\en
\end{lemma}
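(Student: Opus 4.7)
The plan is to proceed by induction on $n \geq 1$, using the recursion of the previous lemma as the engine of the inductive step. For the base case $n=1$ the sum $\sum_{i=1}^{0}$ is empty, so the claimed formula collapses to $h_1^{(k)} = \frac{k-1}{k}\log 2 + \frac{1}{k}\log(c_0)$, which is precisely the identity displayed immediately after the previous lemma.

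For the inductive step, I would assume the formula at level $n$ and substitute it into the recursion
\[
h_{n+1}^{(k)} = h_n^{(k)} + \frac{1}{k^{n+1}}\log(1+r_{n-1}^k)^{k-1} + \frac{1}{k^{n+1}}\log(c_n) - \frac{1}{k^n}\log(c_{n-1}).
\]
Three things then happen at once: the closing term $\frac{1}{k^n}\log(c_{n-1})$ coming from the inductive hypothesis cancels against the $-\frac{1}{k^n}\log(c_{n-1})$ supplied by the recursion; the extra term $\frac{1}{k^{n+1}}\log(1+r_{n-1}^k)^{k-1}$ extends the running sum from upper limit $n-1$ to $n$; and $\frac{1}{k^{n+1}}\log(c_n)$ appears as the new closing term. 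Collecting these gives exactly the asserted expression with $n$ replaced by $n+1$, completing the induction.

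There is really no substantive obstacle here: the previous lemma was arranged in precisely the telescoping shape needed, and the present lemma is nothing more than its explicit closed-form solution. The only point requiring genuine attention is bookkeeping of the index shifts, so that the upper limit of the sum, the subscripts $r_{i-1}$, and the transition $c_{n-1}\mapsto c_n$ all line up correctly. Since the real analytic content was already paid for in the derivation of the recursion, I would expect a short and purely arithmetic argument to suffice.
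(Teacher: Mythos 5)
Your proposal is correct and matches the paper's own argument: both proceed by induction, with the explicit computation of $h_1^{(k)}$ as the base case and the recursion of the preceding lemma driving the inductive step, where the $\frac{1}{k^n}\log(c_{n-1})$ terms cancel and the new term joins the sum. The only cosmetic difference is that the paper phrases the step as passing from $n-1$ to $n$ rather than from $n$ to $n+1$.
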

\begin{proof}
 Assume the formula holds for $n-1$. Then
\be
h_{n-1}^{(k)}=\frac{k-1}{k}\log 2+\sum_{i=1}^{n-2}\frac{1}{k^{i+1}} \log(1+r_{i-1}^k)^{k-1}+
\frac{1}{k^{n-1}}\log(c_{n-2}),
\en
and
\be
\begin{aligned}
h_n^{(k)} & =  h_{n-1}^{(k)}+\frac{1}{k^n}\log(1+r_{n-2}^k)^{k-1}+
\frac{1}{k^n}\log(c_{n-1})-\frac{1}{k^{n-1}}\log(c_{n-2})  \\
& =  \frac{k-1}{k}\log 2+\sum_{i=1}^{n-1}\frac{1}{k^{i+1}}
\log(1+r_{i-1}^k)^{k-1}+ \frac{1}{k^n}\log(c_{n-1}). 
\end{aligned}
\en
\end{proof}

\begin{theorem}\label{th:infseries}
	For all $k \geq 2$ the entropy of the golden mean SFT on the $k$-tree is given by the following formula:
	\be
	h^{(k)}=\frac{k-1}{k}\log 2+(k-1)\sum_{i=1}^{\infty}\frac{1}{k^{i+1}}\log(1+r_{i-1}^k).
	\en
\end{theorem}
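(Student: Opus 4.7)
The plan is to pass to the limit in the closed-form expression for $h_n^{(k)}$ supplied by the preceding lemma, using Theorem~\ref{th:striplim} to identify the limit with $h^{(k)}$. Concretely, the lemma gives
\be
h_n^{(k)} = \frac{k-1}{k}\log 2 + \sum_{i=1}^{n-1}\frac{1}{k^{i+1}}\log(1+r_{i-1}^k)^{k-1} + \frac{1}{k^n}\log(c_{n-1}),
\en
so I would argue separately that (a) the finite sum converges, as $n \to \infty$, to the infinite series in the theorem, and (b) the tail term $k^{-n}\log c_{n-1}$ tends to $0$. The first term is already in the desired form.

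For (a), I would observe that every $r_i \in [0,1]$: indeed $B_i(0) \leq B_i$ by definition, so $r_i \leq 1$. Hence $\log(1+r_{i-1}^k)^{k-1} \leq (k-1)\log 2$, which dominates the general term of the series by the convergent geometric series $(k-1)\log 2 \cdot k^{-(i+1)}$. The series therefore converges absolutely and the partial sums converge to $(k-1)\sum_{i=1}^{\infty} k^{-(i+1)}\log(1+r_{i-1}^k)$, which is exactly the series appearing in the statement.

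For (b), the boundedness of the $c_n$ is already noted in the text preceding Theorem~\ref{th:striplim}: since $r_n \in [0,1]$, we have $c_n = (1+\sqrt{1+4r_n^{k-1}})/2 \leq (1+\sqrt{5})/2$, and $c_n \geq 1$. Hence $\log c_{n-1}$ is uniformly bounded, and $k^{-n}\log c_{n-1} \to 0$.

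Combining (a) and (b) yields $\lim_{n \to \infty} h_n^{(k)} = \tfrac{k-1}{k}\log 2 + (k-1)\sum_{i=1}^{\infty} k^{-(i+1)}\log(1+r_{i-1}^k)$, and Theorem~\ref{th:striplim} identifies the left-hand side with $h^{(k)}$. There is no substantive obstacle here; the only point requiring a moment of care is verifying $r_i \leq 1$ (immediate from $B_i(0) \leq B_i$) so that both the series bound and the bound on $c_n$ are available.
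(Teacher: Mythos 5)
Your proposal is correct and follows essentially the same route as the paper: Theorem \ref{th:infseries} is obtained there precisely by letting $n \to \infty$ in the closed-form expression for $h_n^{(k)}$ from the preceding lemma, identifying the limit with $h^{(k)}$ via Theorem \ref{th:striplim}, and discarding the term $k^{-n}\log c_{n-1}$ because the $c_n$ are bounded. Your added checks that $r_i \in [0,1]$ and $1 \leq c_n \leq (1+\sqrt{5})/2$ are exactly the (unstated) details the paper relies on.
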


Examination of this infinite series formula provides useful upper and lower bounds for $h^{(k)}$.

\begin{lemma}
	For each $k \geq 2$, let
	\be\label{eq:L1}
	L(k)= \frac{k-1}{k}\log2+\frac{k-1}{k^2}\log(1+r_0^k)+
	\frac{k-1}{k^3}\log(1+r_1^k)
	\en
	and
	\be\label{eq:U1}
	U(k)= \frac{k-1}{k}\log 2+\frac{k-1}{k^2}\log(1+r_0^k)+
	\frac{1}{k^2}\log(1+r_1^k).
	\en
	Then
	\be
	{L(k) < h^{(k)} < U(k).}\en
\end{lemma}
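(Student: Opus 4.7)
The plan is to work directly from the infinite-series formula in Theorem \ref{th:infseries}, peeling off the first two summands of the infinite sum and then bounding the tail $\sum_{i \geq 2}$ in two ways. Writing
\[
h^{(k)} = \frac{k-1}{k}\log 2 + \frac{k-1}{k^2}\log(1+r_0^k) + (k-1)\sum_{i=2}^{\infty} \frac{1}{k^{i+1}}\log(1+r_{i-1}^k),
\]
the lower bound $L(k) < h^{(k)}$ is nearly immediate: since every $r_n \in (0,1)$, each summand $\log(1+r_{i-1}^k)$ is strictly positive, so discarding all terms with $i \geq 3$ and keeping the $i=2$ term (which contributes exactly $\frac{k-1}{k^3}\log(1+r_1^k)$) gives a strict lower bound that matches $L(k)$ term for term.

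For the upper bound $h^{(k)} < U(k)$, the key structural ingredient is the claim that $r_j < r_1$ for every $j \geq 2$. The recursions in the text give $r_{n+1} = B_n^k/(B_n^k + B_n(0)^k) = 1/(1+r_n^k)$. A one-line induction then shows $r_n \in (1/2,1)$ for all $n \geq 1$: starting from $r_0 = 1/2$, one has $r_n = 1/(1+r_{n-1}^k) < 1$ (because $r_{n-1}>0$) and $r_n > 1/(1+1) = 1/2$ (because $r_{n-1}<1$). Since the map $r \mapsto 1/(1+r^k)$ is strictly decreasing on $[0,1]$, and $r_{j-1} > 1/2 = r_0$ for every $j \geq 2$, it follows that $r_j = 1/(1+r_{j-1}^k) < 1/(1+r_0^k) = r_1$.

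Granted this domination, the tail estimate follows from a geometric-series computation:
\[
(k-1)\sum_{i=2}^{\infty}\frac{1}{k^{i+1}}\log(1+r_{i-1}^k) < \log(1+r_1^k)\cdot(k-1)\sum_{i=2}^{\infty}\frac{1}{k^{i+1}} = \frac{1}{k^2}\log(1+r_1^k),
\]
the inequality being strict because $r_{i-1} < r_1$ for every $i \geq 3$. Adding the two fixed summands then produces exactly $U(k)$. The main obstacle, such as it is, is conceptual rather than computational: because $r \mapsto 1/(1+r^k)$ is a \emph{decreasing} map, the orbit $(r_n)$ oscillates around its unique fixed point in $(1/2,1)$ instead of converging monotonically, so one cannot simply argue that $(r_n)$ is monotone. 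What rescues the argument is the asymmetry of the initial value $r_0 = 1/2$, which is strictly smaller than every subsequent $r_n$; a single application of the decreasing map then pushes $r_1$ above all later iterates, making $r_1$ the maximum of the sequence and giving the uniform tail bound.
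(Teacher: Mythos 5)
Your proof is correct and follows essentially the same route as the paper: truncate the infinite-series formula of Theorem \ref{th:infseries} to get $L(k)$, and bound the tail by replacing each $\log(1+r_{i-1}^k)$ with $\log(1+r_1^k)$ and summing the geometric series to get $U(k)$. The only difference is that you prove the needed domination $r_i < r_1$ for $i>1$ directly by an elementary induction on the recursion $r_{n+1}=1/(1+r_n^k)$, whereas the paper simply cites this fact from the proof of Theorem \ref{thm:intervalmap}; your self-contained argument is a perfectly valid substitute.
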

\begin{proof}
	Formula (\ref{eq:L1}) is immediate by truncation of the series.
To prove Formula (\ref{eq:U1}), recall that $r_0=1/2, r_1=1/(1+1/2^k)$, and note that $r_1 > r_i$ for all $i > 1$ (see the proof of Theorem \ref{thm:intervalmap}). 
\end{proof}

These formulas give us 
\be
U(k)-L(k)=\frac{1}{k^3}\log(1+r_1^k).
\en
We also need the estimate $U(k-1)<L(k)$.

\begin{lemma}
	For each $k \geq 2$, 
	\be\label{eq:L2}
	L(k)=\frac{k-1}{k}\log 2+\frac{k-1}{k^3}\log[1+(1+r_0^k)^k]
	\en
	and
	\be\label{eq:U2}
	U(k)=\frac{k-1}{k}\log 2+\frac{1}{k^2}\log[1+(1+r_0^k)^k]
	-\frac{1}{k^2}\log(1+r_0^k).
	\en
\end{lemma}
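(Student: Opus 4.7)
The plan is to rewrite the terms $\log(1+r_1^k)$ appearing in both previous formulas for $L(k)$ and $U(k)$ in terms of $r_0$, using the one-step recursion for the ratios $r_n$, and then to collect like terms. Since both $L(k)$ and $U(k)$ as defined in \eqref{eq:L1} and \eqref{eq:U1} already have the first summand $\frac{k-1}{k}\log 2$, the work is entirely in reorganizing the middle and last summands.

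The first step is to establish the basic recursion $r_{n+1}=1/(1+r_n^k)$. This comes directly from the defining relations
\be
B_{n+1}=B_n^k,\qquad B_{n+1}(1)=B_n(0)^k,
\en
wait — I mean from $B_{n+1}(0)=B_n^k$ and $B_{n+1}(1)=B_n(0)^k$, so that $B_{n+1}=B_n^k+B_n(0)^k$ and
\be
r_{n+1}=\frac{B_{n+1}(0)}{B_{n+1}}=\frac{B_n^k}{B_n^k+B_n(0)^k}=\frac{1}{1+r_n^k}.
\en
Specializing to $n=0$ gives $r_1=1/(1+r_0^k)$, in agreement with the value noted in the previous lemma.

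The second step is the key algebraic identity. From $r_1=1/(1+r_0^k)$,
\be
1+r_1^k=1+\frac{1}{(1+r_0^k)^k}=\frac{1+(1+r_0^k)^k}{(1+r_0^k)^k},
\en
hence
\be
\log(1+r_1^k)=\log\bigl[1+(1+r_0^k)^k\bigr]-k\log(1+r_0^k).
\en

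The third and final step is to substitute this identity into \eqref{eq:L1} and \eqref{eq:U1} and collect the coefficients of $\log(1+r_0^k)$. For $L(k)$ the coefficient of $\log(1+r_0^k)$ becomes $\frac{k-1}{k^2}-\frac{(k-1)k}{k^3}=0$, leaving exactly \eqref{eq:L2}. For $U(k)$ the coefficient becomes $\frac{k-1}{k^2}-\frac{k}{k^2}=-\frac{1}{k^2}$, leaving exactly \eqref{eq:U2}. There is no substantive obstacle here; the whole lemma is a bookkeeping consequence of the single recursion $r_1=1/(1+r_0^k)$, and the only thing to watch is keeping the exponents on $k$ straight when distributing the factor $k\log(1+r_0^k)$ across the $1/k^3$ and $1/k^2$ prefactors.
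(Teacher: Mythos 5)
Your proposal is correct and follows essentially the same route as the paper: both substitute $r_1=1/(1+r_0^k)$, rewrite $1+r_1^k=\bigl[1+(1+r_0^k)^k\bigr]/(1+r_0^k)^k$, and cancel or combine the $\log(1+r_0^k)$ terms, with your coefficient bookkeeping ($0$ for $L(k)$, $-1/k^2$ for $U(k)$) matching the paper's line-by-line computation. The only cosmetic point is the mid-sentence self-correction about $B_{n+1}(0)=B_n^k$, which you should clean up, but the mathematics is right.
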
 
\begin{proof}
	\be\begin{aligned}
	L(k) & =  \frac{k-1}{k}\log 2+\frac{k-1}{k^2}\log(1+r_0^k)+
	\frac{k-1}{k^3}\log(1+r_1^k)  \\
	& =  \frac{k-1}{k}\log 2+\frac{k-1}{k^2}\log(1+r_0^k)+
	\frac{k-1}{k^3}\log[1+\frac{1}{(1+r_0^k)^k}]  \\
	& =  \frac{k-1}{k}\log 2+\frac{k-1}{k^2}\log(1+r_0^k)+
	\frac{k-1}{k^3}\log[\frac{1+(1+r_0^k)^k}{(1+r_0^k)^k}]  \\
	& =  \frac{k-1}{k}\log 2+\frac{k-1}{k^3}\log[1+(1+r_0^k)^k].
	\end{aligned}
\en
	\be
	\begin{aligned}
		U(k)&= \frac{k-1}{k}\log 2+\frac{k-1}{k^2}\log(1+r_0^k)+
		\frac{1}{k^2}\log(1+r_1^k)  \\
		& = \frac{k-1}{k}\log 2+\frac{k-1}{k^2}\log(1+r_0^k)+
		\frac{1}{k^2}\log\left[\frac{1+(1+r_0^k)^k}{(1+r_0^k)^k}\right]  \\
		& =  \frac{k-1}{k}\log 2+\frac{k-1}{k^2}\log(1+r_0^k)+
		\frac{1}{k^2}\log([1+(1+r_0^k)^k]-\frac{1}{k^2}\log(1+r_0^k)^k  \\
		& =  \frac{k-1}{k}\log 2+\frac{1}{k^2}\log[1+(1+r_0^k)^k]
		-\frac{1}{k^2}\log(1+r_0^k).
		\end{aligned}
	\en
\end{proof}

\begin{theorem}\label{th:dimincrease}
	The entropy $h^{(k)}$ of the golden mean SFT on the $k$-tree is strictly increasing in $k$.
\end{theorem}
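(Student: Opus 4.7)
The plan is a sandwich argument using the bounds $L(k) < h^{(k)} < U(k)$ from the preceding lemmas together with the comparison $U(k-1) \le L(k)$ announced just before the theorem. Given that comparison, for every $k \ge 3$,
\[
h^{(k-1)} < U(k-1) \le L(k) < h^{(k)},
\]
which is the theorem. The strict outer inequalities follow from Theorem \ref{th:infseries}: the series defining $h^{(k)}$ has infinitely many strictly positive terms beyond the two constituting $L(k)$, while $U(k)$ strictly overestimates the tail by replacing every $r_{i-1}$ ($i \ge 3$) with the dominating value $r_1$. The strict inequalities $r_i < r_1$ for $i \ge 2$ are easy consequences of the recursion $r_{n+1} = 1/(1+r_n^k)$ and are part of Theorem \ref{thm:intervalmap}.

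The heart of the proof is to establish $U(k-1) \le L(k)$. Using the closed forms (\ref{eq:L2}) and (\ref{eq:U2}), direct subtraction yields
\[
\begin{aligned}
L(k) - U(k-1) = {} & \frac{\log 2}{k(k-1)} + \frac{k-1}{k^3}\log\bigl[1+(1+r_0^k)^k\bigr] \\
& - \frac{1}{(k-1)^2}\log\bigl[1+(1+r_0^{k-1})^{k-1}\bigr] + \frac{1}{(k-1)^2}\log(1+r_0^{k-1}),
\end{aligned}
\]
with $r_0 = 1/2$. I would verify positivity by isolating the leading gain $\log 2/(k(k-1))$ and bounding the remainder against it. Since $(1+2^{-k})^k \to 1$ and $(1+2^{1-k})^{k-1} \to 1$ (both controlled explicitly via $(1+x)^n \le e^{nx}$), the two central logarithms approach $\log 2$, and the problem reduces to an elementary rational-function inequality in $k$, valid for every $k$ beyond some explicit threshold $k_0$. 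For the residual cases $k = 3, \ldots, k_0$ the strict increase $h^{(k-1)} < h^{(k)}$ is verified by truncating the series of Theorem \ref{th:infseries} at sufficient depth $N$, using the rigorous geometric tail bound
\[
(k-1)\sum_{i=N+1}^{\infty}\frac{1}{k^{i+1}}\log(1+r_{i-1}^k) \le \frac{\log(1+r_1^k)}{k^{N+1}},
\]
and computing the truncation from the explicit recursion for the $r_i$.

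The main obstacle is calibrating the analytic estimate tightly enough that $k_0$ is small. The crude bound $(1+2^{1-k})^{k-1} \le e$ loses too much for modest $k$; a sharper expansion $(1+x)^n = 1 + nx + O(n^2 x^2)$ with $x = 2^{1-k}$ keeps the correction to the logarithm of order $k/2^{k-1}$, which should be enough to push the analytic argument down into the single digits. Balancing this tightness against the amount of numerical checking at the low end is the only real technical delicacy, since the rapid convergence of the series and the simplicity of the recursion for $r_i$ make whatever finite verification remains purely mechanical.
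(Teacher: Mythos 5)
Your proposal is correct in outline and rests on the same two pillars as the paper's argument: the sandwich $L(k)<h^{(k)}<U(k)$ coming from the series of Theorem \ref{th:infseries} together with $r_1>r_i$ for $i>1$, and the key comparison $U(k-1)<L(k)$ for large $k$, with a finite list of small $k$ handled separately. Where you differ is in execution. For large $k$ the paper does not expand $L(k)-U(k-1)$ additively; it exponentiates (\ref{eq:9}) to get (\ref{eq:4est}) and then proves the cleaner, stronger inequality (\ref{eq:5est}), $2^{\frac{k-1}{k}+(\frac{k-1}{k})^3}>1+x_{k-1}^{k-1}$, by observing that the left side increases in $k$ while $1+x_{k-1}^{k-1}$ decreases (via an elementary alternating-series bound on $\log(1+2^{-k})^k$), so a single check at $k=6$ settles all $k\geq 6$; your additive route with $(1+x)^n\leq e^{nx}$ works but is more delicate, since $L(k)-U(k-1)$ is genuinely small for modest $k$ (about $0.0016$ at $k=3$), so you must retain the positive term $\frac{1}{(k-1)^2}\log(1+2^{1-k})$ or accept a larger threshold $k_0$. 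For the residual small cases the two proofs genuinely diverge: the paper invokes Corollary \ref{cor:strict}, whose lower bounds $h_n^{(k)}\leq h^{(k)}$ depend on the computer-algebra monotonicity result (Theorem \ref{th:stripincrease}) and whose upper bounds use Theorem \ref{thm:lim=inf}, whereas you bound $h^{(k)}$ two-sidedly from the series itself (truncation from below, truncation plus the geometric tail bound $\frac{1}{k^{N+1}}\log(1+r_1^k)$ from above). Your tail bound is valid for $N\geq 1$, and this treatment of small $k$ is more self-contained, needing nothing beyond Theorem \ref{th:infseries} and the elementary fact $r_1>r_i$; the price is that your large-$k$ estimate is less crisp than the paper's one-line monotonicity argument.
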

\begin{proof}
	The inequality is verified in Corollary \ref{cor:strict} by direct computation for $2 \leq k \leq 5$. 
	{Recall that $r_0=1/2$.}
		It is enough to show that for all $k>5$ we have $L(k)>U(k-1)$, i.e.
	\be\label{eq:9}
	\begin{gathered}
	\frac{k-1}{k}\log 2+\frac{k-1}{k^3}\log[1+(1+\frac{1}{2^k})^k]  > \\ 
	\frac{k-2}{k-1}\log 2+\frac{1}{(k-1)^2}\log[1+(1+\frac{1}{2^{k-1}})^{k-1}]-
		 \frac{1}{(k-1)^2}\log(1+\frac{1}{2^{k-1}}).
	 \end{gathered}
	 \en
	Letting $x_k=1+1/2^k$ and exponentiating, (\ref{eq:9}) is equivalent to
	\be\label{eq:4est}
	2^{\frac{k-1}{k}}(1+x_k^k)^{\left(\frac{k-1}{k}\right)^3}>\frac{1+x_{k-1}^{k-1}}{x_{k-1}}.
		\en
		 We claim that
		 \be\label{eq:5est}
		 2^{\frac{k-1}{k}+\left(\frac{k-1}{k}\right)^3}> 1+x_{k-1}^{k-1} \quad\text{ for } k \geq 6,
	 \en
	 from which (\ref{eq:4est}) will follow, because then
	 \be
	 	2^{\frac{k-1}{k}}(1+x_k^k)^{\left(\frac{k-1}{k}\right)^3}>
	 	2^{\frac{k-1}{k}+\left(\frac{k-1}{k}\right)^3}> 1+x_{k-1}^{k-1} >
	 	\frac{1+x_{k-1}^{k-1}}{x_{k-1}}.
	 \en
	 Note that the left side of (\ref{eq:5est}) increases (to 4) in $k$, while $1+x_{k-1}^{k-1}$ decreases (to 2), because, using the alternating series for $\log (1+t)$,
	 \be
	 \frac{k}{2^k}-\frac{1}{2}\frac{k}{2^{2k}}< \log (1+\frac{1}{2^k})^k<\frac{k}{2^k},
	 \en
	 so that $x_k^k>x_{k+1}^{k+1}$ follows from
	 \be
	 \frac{k+1}{2^{k+1}}<k\left(\frac{1}{2^k}-\frac{1}{2}\frac{1}{2^{2k}}\right),
	 \en
	 equivalently
	 \be
	 k>1+\frac{k}{2^k},
	 \en
	 which is true for $k \geq 2$,
	 because $k/2^k$ is decreasing in $k$ and $2>1+2/2^2$. 
	 
	 Now (\ref{eq:5est}) holds for $k=6$, because then the left side is $2.6611\dots$ while the right side is $2.16633\dots$, and therefore it holds for all $k \geq 6$.	 
\end{proof}

\begin{comment}
We are trying to prove that $c(Tr)\geq(Tr)^{k-1}c(r)^k$ with equality only when $Tr=r$, equivalently 
$c^{2k}\leq(1+r^k)^{k-1}(c^k+1)$, with equality only when $Tr=r$.

Expanding (\ref{eq:eqa}) above (with $>$), isolating the square root, squaring both sides, simplifying, and using $c^2=c+r^{k-1}$ (a generalization of $\gamma^2=\gamma+1$), we get
\be\label{eq:toinduct}
\left(\frac{c+r^{k-1}}{v}\right)^k < \frac{c^k+1}{v}.
\en
This is easily checked for $k=1,2$.

For arbitrary $k$, it easily holds at $r=0$.

At $r=1$ (where $u=\gamma$), I proved it by induction on $k$. 

Mathematica supports the statement for all tested values of $k=2,3,\dots$ and $r \in [0,1]$.

So we need to prove any of (\ref{eq:toinduct}); that any of (\ref{eq:eqa}), (\ref{eq:eqb}), or (\ref{eq:eqc}) implies any one of (\ref{eq:eq1})--(\ref{eq:eq4}); or that any of (\ref{eq:eqa})--(\ref{eq:eqc}) has {\em at most one} solution $r \in [0,1]$. Any one of these would show that $\lambda_n^k<\lambda_{n+1}$ for all $n$, and hence the site-specific strip entropies $h_n$ increase strictly to the entropy $h^{(k)}$ of the golden mean on the $k$-tree. 
\end{comment}

\section{Monotonicity of the strip approximation entropies}\label{sec:mon}
Now we turn to the study for fixed $k$ of the strip approximation entropies $h_n^{(k)}$
{for the entropy of the golden mean $k$-tree shift (with labeling alphabet $\{0,1\}$).}
Numerical calculations indicate that these approximations {\em increase strictly} with $n$. 
We can prove this rigorously with the help of computer algebra for $k=2,\dots,8$, and we believe that it is so for all $k \geq 2$. 
The strict increase allows for another proof of Theorem \ref{th:dimincrease}, for rigorous lower bounds for $h^{(k)}$, and efficient approximations to $h^{(k)}$, in the range of $k$ for which strict increase is known to hold. 
{Rigorous upper bounds are easy because the limit in the definition of entropy is the infimum.}
\begin{theorem}\label{th:stripincrease}
	For $k=1,2,\dots ,8$ and {$n=0,1,\dots$,} we have
	\be
	\lambda_n^k < \lambda_{n+1},
	\en
	and hence 
	\be
{	h_n^{(k)} < h_{n+1}^{(k)}.}
	\en
	Moreover
	\be\label{eq:limit}
	\lim_{k \to \infty} h^{(k)} = \log 2.
	\en
\end{theorem}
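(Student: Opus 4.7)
The plan is to handle the two assertions separately, with the monotonicity statement requiring the bulk of the work.

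For the strict monotonicity $\lambda_n^k < \lambda_{n+1}$, I would begin by rewriting the ratio $\lambda_{n+1}/\lambda_n^k$ in closed form. Using the recursion $B_{n+1} = B_n^k + B_n(0)^k = B_n^k(1+r_n^k)$ and the factorization $\lambda_n = B_n^{k-1}c_n$, one finds
\be
\frac{\lambda_{n+1}}{\lambda_n^k}
=(1+r_n^k)^{k-1}\,\frac{c_{n+1}}{c_n^k}, \nonumber
\en
so the inequality becomes $(1+r_n^k)^{k-1}c_{n+1} > c_n^k$, a condition in only two successive ratios linked by $r_{n+1}=1/(1+r_n^k)$. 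Setting $c=c_n$, $c'=c_{n+1}$, $s=(1+r_n^k)^{k-1}$, writing $c'=(1+\sqrt{1+4/s})/2$, isolating the square root, and squaring (checking first that the inequality is nontrivial, i.e.\ $2c^k-s>0$, else we are already done), the inequality collapses to the purely algebraic statement
\be
c^{2k} \;<\; (1+r_n^k)^{k-1}\bigl(1+c^k\bigr), \nonumber
\en
which is exactly the inequality hinted at in the deleted remark block.

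Since $c$ satisfies $c^2-c = r_n^{k-1}$, one can iterate this identity to express $c^k$ as an explicit polynomial $P_k(c,r_n)$ linear in $c$ (of the form $A_k(r_n)+B_k(r_n)c$, whose coefficients are polynomials in $r_n$). Substituting and then eliminating $c$ against $c^2-c-r_n^{k-1}=0$ (via a resultant, or equivalently by separating the rational and irrational parts after replacing $c$ by $(1+\sqrt{1+4r_n^{k-1}})/2$ and squaring once more) yields a univariate polynomial inequality $Q_k(r)>0$ that must be verified on the subinterval of $[0,1]$ containing the orbit $\{r_n\}$. Since $r_0=1/2$ and $T(r)=1/(1+r^k)$ sends $[0,1]$ into itself and has a unique fixed point $r^*\in(1/2,1)$, the orbit is confined to the compact interval with endpoints $1/2$ and $r^*$, which suffices. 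For each $k\in\{2,\dots,8\}$, $Q_k$ has bounded degree with explicit integer coefficients, and positivity on the relevant interval is certified rigorously by Sturm sequences or cylindrical algebraic decomposition in a computer algebra system. The main obstacle is precisely this: the degree of $Q_k$ grows with $k$, and the finer structure of the zero set of $Q_k$ is not apparent from general principles, so I do not see a uniform proof for all $k\geq 2$---this is why the result is only proved through $k=8$ and conjectured beyond. The passage from $\lambda_n^k<\lambda_{n+1}$ to $h_n^{(k)}<h_{n+1}^{(k)}$ is automatic: take logarithms, divide by $k^{n+2}$, and apply $h_n^{(k)}=\log\lambda_{n-1}/k^n$.

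For the limit $\lim_{k\to\infty} h^{(k)} = \log 2$, I would use Theorem \ref{th:infseries}. Since every $r_{i-1}\in(0,1)$ we have $0\le \log(1+r_{i-1}^k)\le \log 2$, so
\be
0 \;\le\; (k-1)\sum_{i=1}^{\infty}\frac{1}{k^{i+1}}\log(1+r_{i-1}^k)
\;\le\; (k-1)\log 2\cdot\frac{1}{k(k-1)} \;=\; \frac{\log 2}{k}. \nonumber
\en
Combined with the leading term $\tfrac{k-1}{k}\log 2$ in the series formula, this sandwiches $h^{(k)}$ between $\tfrac{k-1}{k}\log 2$ and $\log 2$, yielding the claimed limit. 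This second half is a short estimate once Theorem \ref{th:infseries} is available.
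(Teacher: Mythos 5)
Your reduction is the same as the paper's: $\lambda_{n+1}/\lambda_n^k=(1+r_n^k)^{k-1}c_{n+1}/c_n^k$, then isolating the radical in $c_{n+1}$ and squaring to reach $c^{2k}<(1+r_n^k)^{k-1}(1+c^k)$, then eliminating $c$ via $c=(1+\sqrt{1+4r^{k-1}})/2$ to get a univariate polynomial condition checked by computer algebra for $k\le 8$; and your proof of $\lim_k h^{(k)}=\log 2$ from the series formula (sandwiching between $\frac{k-1}{k}\log 2$ and $\log 2$) is correct and even more self-contained than the paper, which cites its earlier work for that limit. But the certification step has a genuine gap. First, the orbit is \emph{not} confined to the interval with endpoints $1/2$ and $r^*$: since $T$ is decreasing and $1/2<r^*$, already $r_1=T(1/2)=1/(1+2^{-k})>r^*$, and in general the even iterates lie below $r^*$ and the odd ones above (for $k>k_0$ they approach a $2$-cycle straddling $r^*$). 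Second, and more seriously, \emph{any} interval containing the orbit contains $r^*$, and at $r=r^*$ the inequality degenerates to equality (this is exactly Proposition \ref{prop:eq}: $Tr=r$ forces $c^{2k}=(1+r^k)^{k-1}(1+c^k)$), so the eliminated polynomial you call $Q_k$ necessarily vanishes inside your verification interval. Hence ``positivity of $Q_k$ on the relevant interval, certified by Sturm sequences'' is false as stated and cannot be rescued by shrinking the interval.

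The missing ideas are precisely what the paper supplies: after separating the rational and irrational parts ($n_k=A_k+B_k$ with $B_k$ carrying $\sqrt{1+4r^{k-1}}$), checking the sign condition, and forming $d_k=A_k^2-B_k^2$, one must factor out the forced double zero, writing $-d_k(r)=q_k(r)\,p_k(r)^2$ with $p_k(r)=r^{k+1}+r-1$, and certify $q_k>0$ on all of $[0,1]$; this shows $n_k\le 0$ with equality exactly at the root $r^*$ of $p_k$. One then needs a separate argument that the orbit never hits $r^*$ — e.g.\ each $r_n$ is rational (it is a ratio of the integers $B_n(0)$ and $B_n$) while $r^*$ is irrational — to conclude the \emph{strict} inequality $\lambda_n^k<\lambda_{n+1}$ at every $n$. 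With those two additions (and noting that the case $n=-1$, i.e.\ $r=1$, is also needed if you want $h_1^{(k)}<h_2^{(k)}$ to start the chain at the bottom), your outline becomes the paper's proof.
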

\begin{remark}
	The analogue of the final statement (\ref{eq:limit}) was conjectured for the golden mean on free groups by Piantadosi \cite{Piantadosi2008} and proved already (in more generality, for $k$-tree shifts determined by arbitrary irreducible one-dimensional shifts of finite type) by the present authors in a previous paper \cite{PS2018}. 
	
	For subshifts on the lattices $\mathbb Z^d$, this ``asymptotic entropy" was known to be the increasing limit in $d$ and was proved in \cites{LouidorMarcusPavlov2013, MeyerovitchPavlov2014} to equal the ``independence entropy". 
	\begin{comment}
	At present we do not have a general proof for all $k$ that $h^{(k)}<h^{(k+1)}$. However, given $k$ we can approximate $h^{(k)}$ from above by $\log B_m^{(k)}/|\Delta_m^{(k)}|$ for any $m$, and we can approximate $h^{(k+1)}$ from below by $h_n^{(k+1)}$ for any $n$, so checking numerically that $\log B_m^{(k)}/| \Delta_m^{(k)}| < h_n^{(k+1)}$ will show rigorously that $h^{(k)},h^{(k+1)}$ for that $k$. 
	For example,
	\be
	h^{(1)}\approx .481 < h^{(2)}\approx .509 < .536 < h^{(3)}.
	\en
	\end{comment}
\end{remark}	
\begin{proof}
	Fix $k=2,3,\dots$.	
	Note that {for $n \geq 0$}
	\be r_{n+1}=\frac{1}{1+r_n^k},
	\en
	so we define $T=T_k: [0,1] \to [0,1]$ by
	\be
	Tx=\frac{1}{1+x^k}, \quad 0 \leq x \leq 1.
	\en
	Then $c_{n+1}(r)=c_n(Tr)$. 
	Define also
	\be
	\begin{aligned}
		c&=c(r)=\frac{1}{2}\left[1+\sqrt{1+4r^{k-1}}\right],\\
		v&=v(r)=1+r^k.
	\end{aligned}
	\en
	Note that
	\be
	c^2-c=r^{k-1}.
	\en
		 Abbreviate {$r_{n}=r, c_n(r)=c(r)$,} and use $(Tr)^{k-1}=c(Tr)^2-c(Tr)$. 
	The statements 
	\be\label{eq:ineqa}
	c(Tr)-(Tr)^{k-1}c(r)^k>0,
	\en
	\be\label{eq:ineqb}
	n_k(r)=c^{2k}-(1+r^k)^{k-1}(c^k+1)<0,
	\en
	\be\label{eq:ineqb0}
	c^{2k}-[(c(c-1))^{k/(k-1)}+1]^{k-1}(c^k+1) <0,
	\en
	\be\label{eq:ineqc}
	\left(\frac{c+r^{k-1}}{v}\right)^k<\frac{c^k+1}{v}.
	\en
	are equivalent to one another. 
{	Moreover, when $r=r_n, n \geq -1$,} each is equivalent to
	\be\label{eq:ineqa0}
	\lambda_n^k<\lambda_{n+1}
	\en
	and implies
	\be
{\log \lambda_n < \frac{\log\lambda_{n+1}}{k}, \quad\text{ so } h_{n+1}^{(k)}=\frac{\log\lambda_{n}}{k^n} < \frac{\log\lambda_{n+1}}{k^{n+1}}=h_{n+2}^{(k)}.}
	\en

	We can prove by hand in the cases $k=2,3$ that $n_k(r) \leq 0$ for all $r \in [0,1]$,
	with equality only at the one point where $p_k(r)=r^{k+1}+r-1=0$. 
	For $k=4,5,6,7,8$, to prove this we need the help of {\em Sage} and {\em Mathematica}. We believe that given time and sufficient computer memory, (\ref{eq:ineqb}) can be verified similarly for each individual $k$. 
	So far we have not been able to prove algebraically that (\ref{eq:eqb}) has at most one solution $r \in [0,1]$ (which would suffice to prove that the statement holds for all $k$---see below).
	
	The calculation for each fixed $k$ proceeds as follows. 
	Expand $n_k(r)$ as a function of $r$, using $c(r)=(1+\sqrt{1+4r^{k-1}})/2$. 
	Group into $B_k(r)$ all the terms that contain a factor of $\sqrt{1+4r^{k-1}}$ to write $n_k(r)=A_k(r)+B_k(r)$. 
	We verify that $A_k(r)$ and $B_k(r)$ have opposite signs on $[0,1]$, take $B_k(r)$ to the right side of the formula, square both sides and take the difference. 
	Thus the left side of (\ref{eq:ineqb}) is nonpositive if and only if $d_k(r)=A_k(r)^2-B_k(r)^2 \leq 0$. 
	We define 
	\be
	q_k(r)=-\frac{d_k(r)}{p_k(r)^2} 
	\en
	and observe that $q_k(r)>0$ on $[0,1]$, concluding the argument. 
	(In some of the cases a constant is factored out, making no difference in the sign.)
	
	\begin{remark}
		It appears that $q_k(r) \to 1+2r+3r^2+\dots=1/(1-r)^2$ as $k \to \infty$ for each $r \in [0,1)$.
	\end{remark}
	
	Figure \ref{fig:diff_6,r_} is a graph of $n_6(r)$, and Figure \ref{fig:dq5} is a graph of $q_5(r)$.
	\begin{figure}
		\begin{center}
			\includegraphics[width=3.5in]{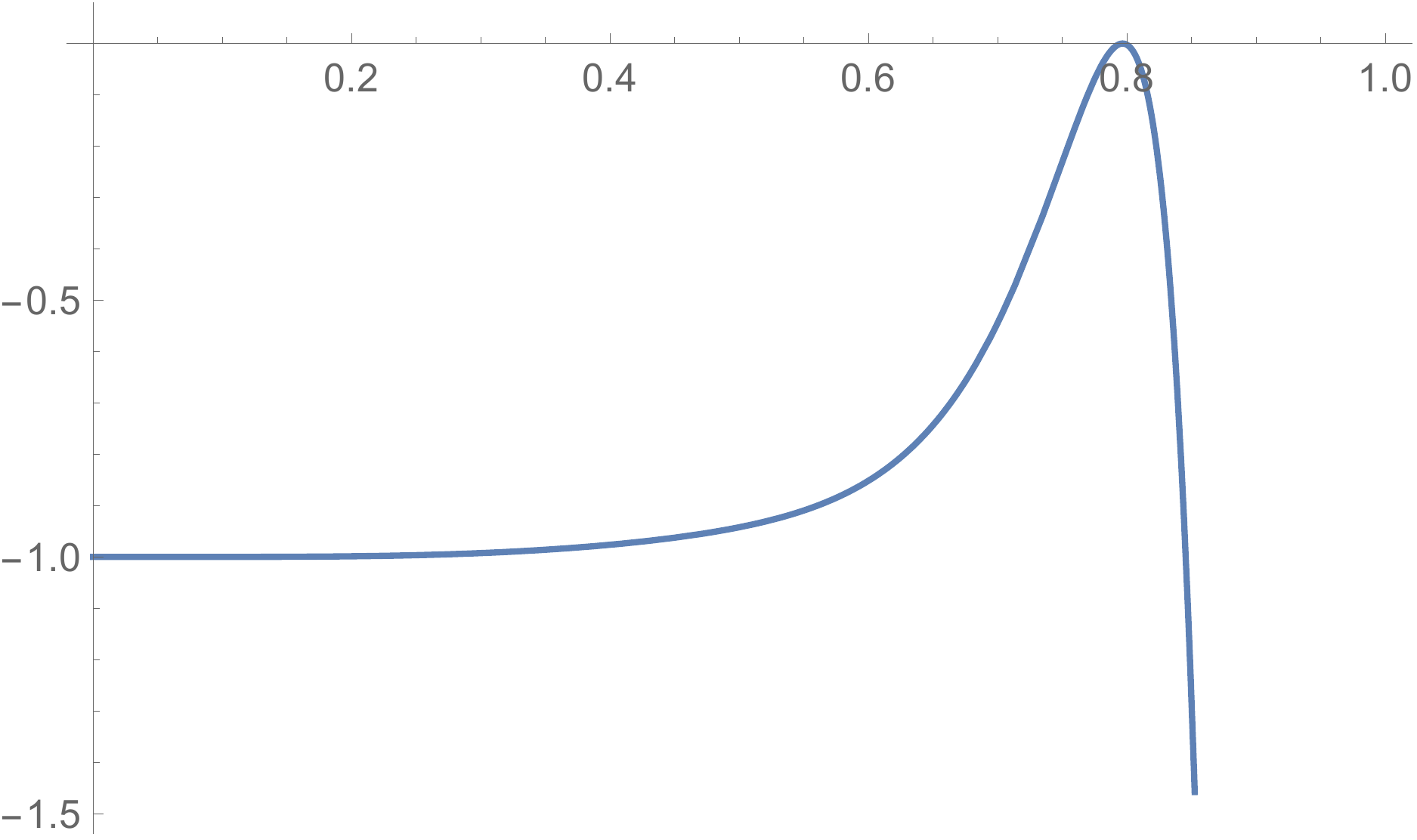}
			\caption{Graph of the difference $n_6(r)$}
			\label{fig:diff_6,r_}
		\end{center}
	\end{figure}

	\begin{figure}
		\begin{center}
			\includegraphics[width=3.5in]{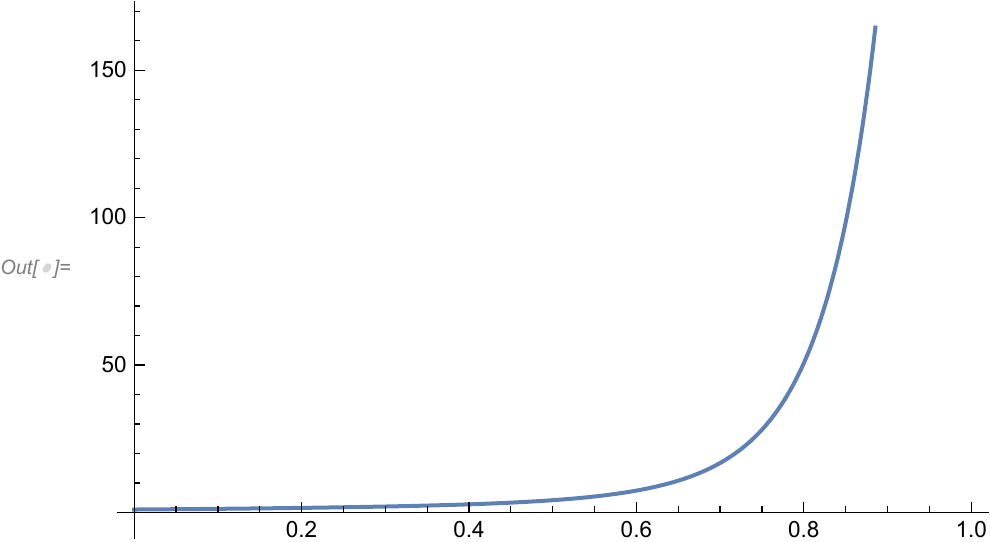}
			\caption{The nonnegative polynomial $q_k(r)$ remaining after dividing out $p_k(r)^2$ from $d_k(r)$, $k=5$}
			\label{fig:dq5}
		\end{center}
	\end{figure}

	{The results for $k=2, 3,4$ are as follows; the cases $k=5,6,7,8$ are in the Appendix. 
	Maybe contemplation of these formulas can help to find a proof for all $k$.}
	\be
	\begin{aligned}
		n_2(r)&= -1+r-\frac{r^2}{2}-r^3+r \sqrt{1+4 r}-\frac{1}{2} r^2 \sqrt{1+4 r}  \\
		A_2(r)&= -1+r-\frac{r^2}{2}-r^3   \\
		B_2(r)&= (r-r^{2}/2)\sqrt{1+4r}   \\
		d_2(r)&= 1-2 r+r^2-2 r^3+2 r^4+r^6   \\
		q_2(r)&= 1   
	\end{aligned}
	\en
	\be
	\begin{aligned}
		n_3(r)&= \frac{1}{2} (-2+3 r^2-6 r^3+9 r^4-6 r^5-r^6-3 r^8+3 r^2 \sqrt{1+4 r^2}-2 r^3 \sqrt{1+4 r^2}+\\
		&3 r^4 \sqrt{1+4 r^2}-2 r^5 \sqrt{1+4r^2}-r^6 \sqrt{1+4 r^2}-r^8 \sqrt{1+4 r^2})  \\
		A_3(r)&= -2+3 r^2-6 r^3+9 r^4-6 r^5-r^6-3 r^8   \\
		B_3(r)&= \sqrt{1+4 r^2} \left(3 r^2-2 r^3+3 r^4-2 r^5-r^6-r^8\right)   \\
		d_3(r)&= 4-12 r^2+24 r^3-36 r^4+36 r^6-72 r^7+60 r^8+8 r^9-36 r^{10}+72 r^{11}-\\
		&16 r^{12}+24 r^{14}-16 r^{15}-4 r^{18} \\  
		q_3(r)/4&= 1+2 r+4 r^3+r^4+4 r^6-2 r^7-r^{10}\\
		&= 1+2r+4r{}^{3}+r{}^{4}(1-r{}^{6})+2r{}^{6}(2-r))  
	\end{aligned}
	\en
	\be
	\begin{aligned}
		n_4(r)&= -1+2 r^3-\frac{9 r^4}{2}+9 r^6-6 r^7-\frac{9 r^8}{2}+8 r^9-3 r^{10}-6 r^{11}\\
		&-\frac{r^{12}}{2}-3 r^{14}-2 r^{15}-r^{18}+2 r^3 \sqrt{1+4
			r^3}-\frac{3}{2} r^4 \sqrt{1+4 r^3}+\\
		&5 r^6 \sqrt{1+4 r^3}-3 r^7 \sqrt{1+4 r^3}-\frac{3}{2} r^8 \sqrt{1+4 r^3}+\\
		&2 r^9 \sqrt{1+4 r^3}-3 r^{11} \sqrt{1+4	r^3}-\\
		&\frac{1}{2} r^{12} \sqrt{1+4 r^3}-r^{15} \sqrt{1+4 r^3}  \\
		A_4(r)&=  -1+2 r^3-\frac{9 r^4}{2}+9 r^6-6 r^7-\frac{9 r^8}{2}+8 r^9-3 r^{10}-6 r^{11}-\\
		&\frac{r^{12}}{2}-{3 r^{14}-2 r^{15}-r^{18}}  \\
		B_4(r)&= \sqrt{1+4 r^3} (2 r^3-\frac{3 r^4}{2}+5 r^6-3 r^7-\\
		&\frac{3 r^8}{2}+2 r^9-3 r^{11}-\frac{r^{12}}{2}-r^{15})  \\
		d_4(r)&=  1-4 r^3+9 r^4-18 r^6+27 r^8-16 r^9-48 r^{10}+36 r^{11}+37 r^{12}-48 r^{13}-\\
		&30 r^{14}+68 r^{15}+27 r^{16}-48 r^{17}+18 r^{18}+48 r^{19}+\\
		&18	r^{20}-16 r^{21}+24 r^{22}+12 r^{23}+20 r^{24}+6 r^{26}+15 r^{28}+6 r^{32}+r^{36}  \\
		q_4(r)&=  1+2 r+3 r^2+6 r^4+14 r^5+6 r^6+15 r^8+26 r^9+4 r^{10}+18 r^{12}+18 r^{13}+\\
		&4 r^{14}+9 r^{16}+6 r^{17}+6 r^{18}+2 r^{21}+4 r^{22}+r^{26}  
	\end{aligned}
	\en

\end{proof}

\begin{remark}
	We have shown that for $k=2,\dots,8$ the $h_n^{(k)}$ increase strictly to a limit, which is $h^{(k)}$, the entropy of the golden mean $k$-tree shift.
	{It is clear that $\lambda_n < \lambda_{n+1}$, but it is not clear that $h_n^{(k)} = \log \lambda_{n-1}/k^n < h_{n+1}^{(k)} = \log \lambda_{n}/k^{n+1}$.}
	The $h_n^{(k)}$ supply {\em strict lower bounds} for $h^{(k)}$, and they increase to $h^{(k)}$ very quickly, providing efficient numerical approximations. For $k$=2, we find $h^{(2)} \approx .509  > .481 \approx h^{(1)}$.
	\begin{table}[h]\label{table:approxs}
		\centering
		\begin{tabular}{l l l l l l l l l l l}
			$n$  &$B_{n-1}$  &$B_{n-1}(0)$   &$r_{n-1}(0)$ &$\log B_{n-1}/(2^n-1)$ &$\log B_{n-1}/2^n$ &$h_n^{(2)}$  \\
			\hline
			$1$  &$2$          &$1$         &$.5$        &$.693$    &$.347$    &.5025    \\  
			$2$  &$5$          &$4$         &$.8$        &$.536$    &$.402$    &.5078    \\
			$3$  &$41$         &$25$        &$.6098$     &$.531$    &$.465$    &.50866    \\
			$4$  &$2306$       &$1681$      &$.729$      &$.516$    &$.484$    &.50885    \\
			$5$  &$8,143,397$  &$5,317,636$ &$.653$      &$.513$    &$.497$    &.508889    \\
		\end{tabular}
		\medskip
		\caption{Strip entropy estimation for $k=2$}
	\end{table}
	It is curious that here we don't know that even $h_0^{(k)} \leq h^{(k)}$ without seeing first that the $h_n^{(k)}$ increase with $n$ and that their limit is $h^{(k)}$. 
	In \cite{PS2018} it is proved by linear algebra that, for $k=2$ and a tree shift with adjacency restrictions given by an irreducible $d \times d$ matrix, $h_0^{(2)}=h^{(1)}\leq h^{(2)}$. 
	But the linear algebra proof does not give the {\em strict} inequality $h^{(1)} < h^{(2)}$.
	\end{remark}

Monotonicity in $n$ of the $h_n^{(k)}$, along with knowledge that the limit in the defintion is the infimum (Theorem \ref{thm:lim=inf}), allows to show explicitly by rigorous numerical calculation that $h^{(k)}$ increases with $k$, for the values of $k$ where the monotonicity is known to hold, providing another proof of Theorem \ref{th:dimincrease} for these cases.
\begin{corollary}\label{cor:strict}
	The entropy of the hard square tree shift on the $k$-tree can {\em increase} with the dimension $k$ of the tree (in contrast with what is known about the hard square SFT on the lattice $\mathbb Z^k$ \cite{GamarnikKatz2009}). In particular, we have 
		\be
		\begin{aligned}
	h^{(1)}&\approx .481 < h^{(2)}\approx .509 < .536 < h^{(3)} <.548 \approx \frac{\log B_2^{(3)}}{1+3+3^2}\\
	&<.561 < h^{(4)} <.567 \approx \frac{\log B_2^{(4)}}{1+4+4^2}\\
	&<.58<h^{(5)}<.5839\approx \frac{\log B_2^{(5)}}{1+5+5^2}\\
	&<.5952<h^{(6)}<\dots.
	\end{aligned}
	\en
	\end{corollary}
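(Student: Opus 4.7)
The plan is to exploit the two tools assembled in the preceding sections: Theorem \ref{thm:lim=inf}, which for each $k$ gives rigorous \emph{upper} bounds $h^{(k)} \leq \log B_m^{(k)}/|\Delta_m^{(k)}|$ valid for every $m \geq 0$, and Theorem \ref{th:stripincrease}, which (via $h_n^{(k)} < h^{(k)}$ for $k = 1,\dots,8$) gives rigorous \emph{lower} bounds $h^{(k)} > h_n^{(k)}$ for every $n \geq 0$. Interleaving these two bounds for consecutive values of $k$ yields the entire chain of strict inequalities claimed in the corollary.

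First, I would compute the upper bounds. The recursion
\[
B_0(0)=B_0(1)=1, \quad B_{n+1}(0)=B_n^k, \quad B_{n+1}(1)=B_n(0)^k, \quad B_n=B_n(0)+B_n(1)
\]
from the preceding section produces $B_m^{(k)}$ exactly as an integer for each small $m$. For each $k=3,4,5,6$ I would take $m=2$, noting that $|\Delta_2^{(k)}| = 1+k+k^2$ is the denominator appearing in the corollary's displayed upper bounds ($.548, .567, .5839,\dots$); these are precisely $\log B_2^{(k)}/(1+k+k^2)$ rounded up. For $k=2$ the upper bound $.509$ is supplied by the strip-entropy itself, already tabulated in Table \ref{table:approxs}.

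Second, I would compute the lower bounds $h_n^{(k+1)}$. Using the explicit formula $h_n^{(k)} = \log \lambda_{n-1}/k^n$, where $\lambda_{n-1}$ is the largest root of $\lambda^2 - B_{n-1}^{k-1}\lambda - [B_{n-1}B_{n-1}(0)]^{k-1}$, together with the same recursion for $B_n$, this is a finite algebraic evaluation. For each $k+1 \in \{2,\dots,6\}$ a small value of $n$ (in practice $n=2$ or $n=3$) already produces an $h_n^{(k+1)}$ strictly exceeding the corresponding upper bound for $h^{(k)}$.

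Third, with both columns of numbers carried to enough decimal places, I would verify each link in
\[
h^{(k)} \,\leq\, \frac{\log B_{m_k}^{(k)}}{|\Delta_{m_k}^{(k)}|} \,<\, h_{n_{k+1}}^{(k+1)} \,<\, h^{(k+1)},
\]
which is a purely numerical check. The main obstacle is not conceptual but quantitative: since $h^{(k)} \to \log 2$ by \eqref{eq:limit}, the gaps $h^{(k+1)}-h^{(k)}$ contract, so enough precision must be carried (the $\lambda_{n-1}$'s involve irrational square roots) to guarantee $U_k < L_{k+1}$ strictly rather than as a numerical near-tie. For $k \leq 5$ the gaps remain comfortably wide—as the displayed separations of roughly $0.01$ between consecutive upper and lower estimates show—so $m=2$ and $n=2$ or $3$ suffice; if the chain were to be extended beyond $k=6$ within the range $k \leq 8$ permitted by Theorem \ref{th:stripincrease}, one would simply take $m$ and $n$ a bit larger.
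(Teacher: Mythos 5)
Your proposal is essentially the paper's own proof: upper bounds $h^{(k)} \leq \log B_m^{(k)}/|\Delta_m^{(k)}|$ from Theorem \ref{thm:lim=inf}, lower bounds $h_n^{(k+1)} < h^{(k+1)}$ from the strict increase of the strip entropies (Theorems \ref{th:striplim} and \ref{th:stripincrease}), and a finite numerical comparison of the two (the paper notes $n=4$ suffices). One small correction: for the link $h^{(2)} < h^{(3)}$ the needed upper bound on $h^{(2)}$ cannot be the strip value $.509$, which is a \emph{lower} bound; it must again come from Theorem \ref{thm:lim=inf}, e.g.\ $\log B_2^{(2)}/7 \approx .531 < .536$, as in the table.
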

\begin{proof}
		Given $k$, by Theorem \ref{thm:lim=inf} we can approximate $h^{(k)}$ from above by $\log B_m^{(k)}/|\Delta_m^{(k)}|$ for any $m$, and for $k=2,\dots,8$ we can approximate $h^{(k+1)}$ from below by $h_n^{(k+1)}$ for any $n$, so checking numerically that $\log B_m^{(k)}/| \Delta_m^{(k)}| < h_n^{(k+1)}$ ($n=4$ suffices for the above statements) shows rigorously that $h^{(k)}<h^{(k+1)}$ for that $k$. 
		\end{proof}

		We discuss now a plan to extend Theorem \ref{th:stripincrease} to all $k \geq 2$.
		It suffices to show that for each fixed $k \geq 2$ equality 
	in any of (\ref{eq:ineqa})--(\ref{eq:ineqc}) 
	can hold for at most one $r \in [0,1]$, which is irrational (and in fact is the solution of $Tr=r$), and so 
	in particular the inequality holds for all the $r_{n-1}, n=1,2, \dots$, which are rational.
	
	Note that as $r$ increases from $0$ to $1$, $c(r)$ increases from $1$ to the golden mean $\gamma$, $Tr$ decreases from $1$ to $1/2$, and so $c(Tr)$ decreases from $\gamma$ to $c(1/2)=(1+\sqrt{1+8/2^k})/2$, while 
	$1+1/c(r)^k$ decreases from $2$ to $1+1/\gamma^k$.. 
	
	When $r=0$, $c(Tr)=\gamma \geq1 ={c(r)^k}$, so (\ref{eq:ineqa}) holds.
	
	When $r=1$, we need to show
	\be
	1+\sqrt{1+2^{-k+3}}>4\left(\frac{\gamma}{2}\right)^k.
	\en
	Taking the $1$ from the left side to the right, squaring both sides, and simplifying leaves
	\be
	\frac{\gamma^{2k}}{2^{k-1}}< \gamma^k+1.
	\en
	Equivalently (since $\gamma^2=\gamma+1$),
	\be
	\left( \frac{\gamma+1}{2}\right)^k < \frac{\gamma^k+1}{2},
	\en
	which is easily proved by induction on $k=1,2,\dots$:
	\be 
	\left(\frac{\gamma+1}{2}\right)^{k+1} < \left(\frac{\gamma^k+1}{2}\right) \left(\frac{\gamma+1}{2}\right), \text{ which is } < \frac{\gamma^{k+1}+1}{2}
	\en
	because
	\be
	\begin{gathered}
	\gamma^{k+1}+\gamma^k+\gamma+1 < 2 \gamma^{k+1}+2 \text{ if and only if }\\
	\gamma^{k+1}-\gamma^k-\gamma+1 > 0, \text { equivalently}\\
	\gamma^k(\gamma-1)-\gamma+1 > 0, \text{ or }\\
	\gamma^k -1 > 0,
	\end{gathered}
	\en
	which is true. Thus the desired inequality holds at the extremes $r=0,1$.
	
	We want to show that there is at most one value of $r \in [0,1]$ for which equality can hold in (\ref{eq:ineqa}). Given this and the continuity of the functions being compared, the inequality \ref{eq:ineqa} holds on all of $[0,1]$ except at the point where $Tr=r$, where it is an equality.
	
	We have proved (\ref{eq:ineqb}) above by explicit algebraic calculation in the cases $k=2,3, \dots,8$, from which it emerges that its left side $n_k(r)$ is divisible by $p_k(r)^2$, where
	\be
	p_k(r)=r^{k+1}+r-1.
	\en
	
	It is difficult to make an estimate to prove any of these inequalities. For example, the graph of the left side $n_k(r)$ of inequality (\ref{eq:ineqb}) 
	touches the $r$ axis with a high degree of tangency, not allowing one to squeeze a simpler expression in between. 
	Figure \ref{fig:TwoTermsAnddiff_5,r_} shows the two terms in (\ref{eq:eqb}) of Prop. \ref{prop:eq} and their difference for $k=5$.
	\begin{figure}
		\begin{center}
			\includegraphics[width=3.5in]{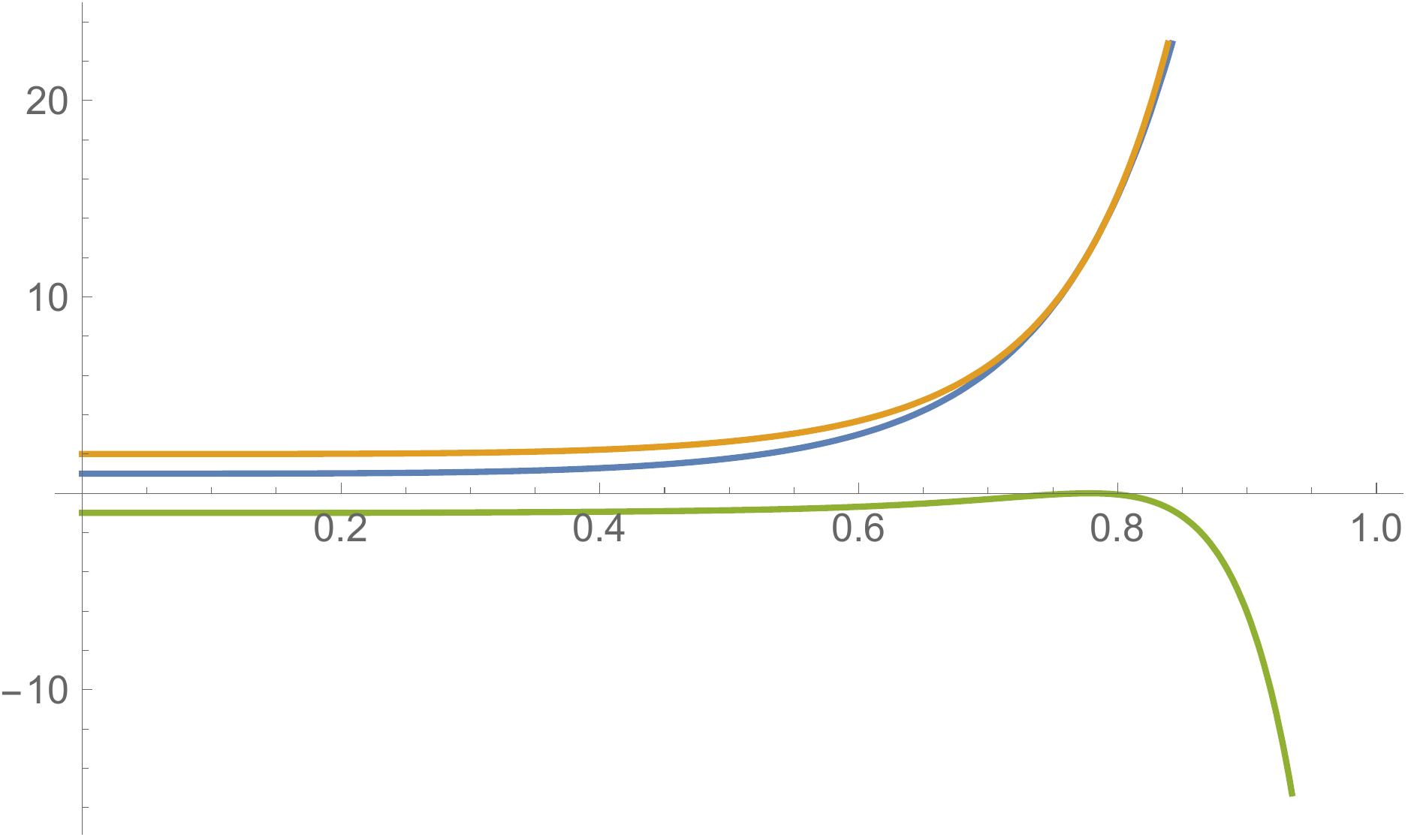}
			\caption{Graph of the two terms in (\ref{eq:eqb}) of Prop. \ref{prop:eq} and their difference, $k=5$}
			\label{fig:TwoTermsAnddiff_5,r_}
		\end{center}
	\end{figure}

	If one can obtain a workable expression for $n_k(r)/p_k(r)^2$, however, one could seek to prove, for example, 
	\be
	\frac{n_k(r)}{p_k(r)^2} \leq -\frac{1}{2} \quad\text{ for } r \in [1,\gamma],
	\en
	thus proving (\ref{eq:ineqb}).

	We believe that the irrationality of the single solution will follow from Gauss' Lemma.

The following observations may be helpful in completing the proof of strict increase of $h_n^{(k)}$ for each $k$.

\begin{proposition}\label{prop:eq}
	The following are equivalent:
	\be\label{eq:eq1}
	Tr=r,
	\en
	\be\label{eq:eq2}
	r^{k+1}+r-1=0,
	\en
	\be\label{eq:eq3}
	cr=1,
	\en
	\be\label{eq:eq4}
	c^{k+1}-c^k-1=0.
	\en
	Moreover, each of (\ref{eq:eq1})--(\ref{eq:eq4}) implies
	\be\label{eq:eqa0}
	\lambda_n^k=\lambda_{n+1},
	\en
	\be\label{eq:eqa}
	c(Tr)-(Tr)^{k-1}c(r)^k=0,
	\en
	\be\label{eq:eqb}
	c^{2k}-(1+r^k)^{k-1}(c^k+1)=0,
	\en
	\be\label{eq:eqb0}
	c^{2k}-[(c(c-1))^{k/(k-1)}+1]^{k-1}(c^k+1) =0,
	\en
	\be\label{eq:eqc}
	\left(\frac{c+r^{k-1}}{v}\right)^k=\frac{c^k+1}{v}.
    \en
	\end{proposition}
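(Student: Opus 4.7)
The plan is to establish first the mutual equivalence of (\ref{eq:eq1})--(\ref{eq:eq4}) by direct algebraic manipulation of the defining identities $Tr = 1/(1+r^k)$ and $c^2 - c = r^{k-1}$, and then to derive the consequences (\ref{eq:eqa0})--(\ref{eq:eqc}) using the key algebraic identity $c + r^{k-1} = c^2$ (which holds unconditionally) together with the formula $\lambda_n = B_n^{k-1} c(r_n)$.

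For the equivalences, I would first rewrite $Tr = r$ as $r(1+r^k) = 1$, which is (\ref{eq:eq2}). Next, $c = 1/r$ satisfies the quadratic $x^2 - x - r^{k-1} = 0$ if and only if $(1-r)/r^2 = r^{k-1}$, equivalently $r + r^{k+1} = 1$; since the quadratic has a unique positive root and $c, r > 0$, this gives (\ref{eq:eq2}) $\Leftrightarrow$ (\ref{eq:eq3}). For (\ref{eq:eq3}) $\Leftrightarrow$ (\ref{eq:eq4}), multiply $c^2 - c = r^{k-1}$ by $c^{k-1}$ to obtain $c^{k+1} - c^k = (cr)^{k-1}$, so (\ref{eq:eq4}) is equivalent to $(cr)^{k-1} = 1$, which (since $cr > 0$) is equivalent to $cr = 1$.

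For the implications, the central observation is the unconditional identity $c + r^{k-1} = c + (c^2 - c) = c^2$, valid for every $r \in [0,1]$. This gives $(c + r^{k-1})^k = c^{2k}$, so the left side of (\ref{eq:eqc}) equals $c^{2k}/v^k$ and (\ref{eq:eqc}) is transparently equivalent to $c^{2k} = v^{k-1}(c^k + 1)$, which is (\ref{eq:eqb}); and (\ref{eq:eqb0}) is just (\ref{eq:eqb}) after eliminating $r$ via $r^k = (c(c-1))^{k/(k-1)}$. Thus (\ref{eq:eqb}), (\ref{eq:eqb0}), (\ref{eq:eqc}) are unconditionally equivalent. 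For (\ref{eq:eqa}) $\Leftrightarrow$ (\ref{eq:eqa0}), the recursions $B_{n+1} = B_n^k (1 + r_n^k)$ and $r_{n+1} = Tr_n$ yield $\lambda_n^k = B_n^{k(k-1)} c(r_n)^k$ and $\lambda_{n+1} = B_n^{k(k-1)}(1 + r_n^k)^{k-1} c(Tr_n)$, so (\ref{eq:eqa0}) at level $n$ is equivalent to (\ref{eq:eqa}) at $r = r_n$. It then suffices to derive (\ref{eq:eqa}) from (\ref{eq:eq1}): when $Tr = r$, condition (\ref{eq:eqa}) reduces to $c = r^{k-1} c^k$, i.e.\ $(rc)^{k-1} = 1$, which is exactly (\ref{eq:eq3}).

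The main obstacle is purely organizational rather than technical: one must keep straight which of the identities among (\ref{eq:eqa})--(\ref{eq:eqc}) hold for all $r$ (namely the equivalences among them, via $c + r^{k-1} = c^2$) and which require the fixed-point hypothesis (namely the passage from one of (\ref{eq:eq1})--(\ref{eq:eq4}) to any one of (\ref{eq:eqa})--(\ref{eq:eqc})). Once this is sorted, every step is a short algebraic verification, and no estimates are needed.
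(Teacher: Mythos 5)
Your handling of the equivalences (\ref{eq:eq1})--(\ref{eq:eq4}) is correct (and the step (\ref{eq:eq3})$\Leftrightarrow$(\ref{eq:eq4}) by multiplying $c^2-c=r^{k-1}$ by $c^{k-1}$ is a bit slicker than the paper's two separate computations), and your verifications that (\ref{eq:eqa})$\Leftrightarrow$(\ref{eq:eqa0}) at $r=r_n$ and that (\ref{eq:eqb}), (\ref{eq:eqb0}), (\ref{eq:eqc}) are unconditionally equivalent via $c+r^{k-1}=c^2$ agree with the paper. The gap is in the ``Moreover'' part: you have split the five consequences into the two groups $\{(\ref{eq:eqa}),(\ref{eq:eqa0})\}$ and $\{(\ref{eq:eqb}),(\ref{eq:eqb0}),(\ref{eq:eqc})\}$, proved the internal equivalences of each group, and derived (\ref{eq:eqa}) from (\ref{eq:eq1}); but nothing in your argument links the two groups, so the implication from (\ref{eq:eq1})--(\ref{eq:eq4}) to (\ref{eq:eqb}), (\ref{eq:eqb0}), (\ref{eq:eqc}) is never established, and the assertion that ``it suffices to derive (\ref{eq:eqa}) from (\ref{eq:eq1})'' is unjustified as written.

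The missing bridge is exactly what the paper supplies: $\lambda_n^k=\lambda_{n+1}$ if and only if $f_{n+1}(\lambda_n^k)=\lambda_n^{2k}-B_{n+1}^{k-1}\lambda_n^k-[B_{n+1}B_{n+1}(0)]^{k-1}=0$, and dividing by $B_n^{2k(k-1)}$ turns this into $c^{2k}-c^k(1+r^k)^{k-1}-(1+r^k)^{k-1}=0$, i.e.\ (\ref{eq:eqb}) at $r=r_n$, so that (\ref{eq:eqa0})$\Leftrightarrow$(\ref{eq:eqb}) and the chain closes. You can also patch it entirely within your own framework: either note that under (\ref{eq:eq2})--(\ref{eq:eq3}) one has $c=1/r$ and $1+r^k=1/r$, so both sides of (\ref{eq:eqb}) equal $r^{-2k}$; or show (\ref{eq:eqa})$\Leftrightarrow$(\ref{eq:eqb}) unconditionally, since $y=(Tr)^{k-1}c(r)^k>0$ satisfies $y^2-y-(Tr)^{k-1}=(Tr)^{k-1}\bigl[(Tr)^{k-1}c^{2k}-c^k-1\bigr]$, so $y$ equals the positive root $c(Tr)$ exactly when $c^{2k}=(1+r^k)^{k-1}(c^k+1)$. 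Any of these one-line additions completes the argument; with that insertion your route is essentially the paper's.
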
.
\begin{proof}
	That (\ref{eq:eq1}) is equivalent to (\ref{eq:eq2}) is clear.
	
	(\ref{eq:eq1}) implies (\ref{eq:eq3}) because $Tr=r$ implies 
	\be\frac{1}{r^2}-\frac{1}{r}=r^{k-1}=c^2-c,
	\en
	so that
	\be
	0=c^2-\frac{1}{r^2}-(c-\frac{1}{r})=(c-\frac{1}{r})[(c+\frac{1}{r})-1],
	\en
	which implies that $c=1/r$.
	
	(\ref{eq:eq3}) implies (\ref{eq:eq1}) because if $c=1/r$ we have
	\be
	r^{k-1}=c^2-c=\frac{1}{r^2}-\frac{1}{r},
	\en
	so that Tr=r.
	
	To see that (\ref{eq:eq3}) implies (\ref{eq:eq4}), note that we already know that (\ref{eq:eq2}) is equivalent to (\ref{eq:eq3}), so that $cr=1$ implies
	\be
	c^{k+1}-c^k-1=\frac{1}{r^{k+1}}-\frac{1}{r^k}-1=r^{k+1}(1-r-r^{k+1})=0.
	\en
	
	Finally, (\ref{eq:eq4}) implies (\ref{eq:eq3}) because
	\be
	0=c^{k+1}-c^k-1=c^{k-1}(c^2-c)-1=c^{k-1}r^{k-1}-1
	\en
	implies $cr=1$.

Turning now to the second part of the Proposition, if $Tr=r$ then (\ref{eq:eqa}) says $c(r)=r^{k-1}c(r)^k$, which is true because $cr=1$. 

We finish the argument by showing that (\ref{eq:eqa0})--(\ref{eq:eqc}) are equivalent to one another. 
Note that $\lambda_n^k=\lambda_{n+1}$ if and only if
\be
f_{n+1}(\lambda_n^k)=\lambda_n^{2k}-B_{n+1}^{k-1}\lambda_n^k-[B_{n+1}B_{n+1}(0)]^{k-1}=0,
\en
equivalently, since $c_n=\lambda_n/B_m^{k-1}$,
\be\label{eq:eqB}
\frac{f_{n+1}(\lambda_n^k)}{B_n^{2k(k-1)}}=c_n^{2k}-c_n^k(1+r_n^k)^{k-1}-(1+r_n^k)^{k-1}=0.
\en
This shows that (\ref{eq:eqa0}) is equivalent to (\ref{eq:eqb}).

Further, $\lambda_n^k=\lambda_{n+1}$ says that
\be
B_n^{k(k-1)}c_n^k=B_{n+1}^{k-1}c_{n+1}=[B_n^k(1+r_n^k)]^{k-1}c_{n+1},
\en
equivalently
\be
c_n^k=(1+r_n^k)^{k-1}c_{n+1}.
\en
This shows that (\ref{eq:eqa0}) is equivalent to (\ref{eq:eqa}).

	That (\ref{eq:eqb}) is equivalent to each of (\ref{eq:eqb0}) and (\ref{eq:eqc}) is direct from $c^2-c=r^{k-1}$.
\end{proof}

\section{Map of the simplex}
{Consider a $k$-tree SFT $Z_M$ with 
 $k \geq 2$, alphabet size $d=2$, and transition matrix $M=[11,10]$, so that $Z_M$ consists of all labelings of the $k$-tree with no adjacent nodes both labeled by $1$.}
Consider first the case $k=2$. 
In this case $T:K_1 \to K_1$ is a function of a single variable: $T(x, 1-x)=(1, x^2)/(1+x^2)$.
The equations $Tx=x$ reduce to $x^3+x-1=0$, with root $u=0.682328$. 
The derivative at the fixed point is $T'(u)=-0.635345$.
The fixed point is attracting: $T^nx \to u$ for all $x \in [0,1]$.

In the case of the $k$-tree, $k \geq 2$, $T_k:K_1 \to K_1$ (again identifying $K_1$ with $[0,1]$ according to $(x,1-x) \leftrightarrow x$) 
is the function $T_kx=1/(1+x^k)$. We may consider this function for all $k>0$, not necessarily an integer. 
When $k$ is fixed, we will suppress the subscript on $T_k$.
Again there is a fixed point $u_k=u \in [0,1]$, which is unique because $T$ is decreasing. 
For small $k$ we have $|T'(u)|<1$, so the fixed point is attracting. 
But there is a critical value $k_0 \approx 4.125$, the solution of
\be
k_0=1 + k_0^{k_0/(k_0+1)},
\en
at which $T'(u)=1$. 
We claim that for $k>k_0$ besides the fixed point $u$ there is also an attracting periodic orbit $\{p_1,p_2\}$, and there are no other periodic points.
Figure \ref{fig:f7} shows the graphs of $T, T^2$, and the identity function for $k=7$.
\begin{figure}
	\begin{center}
		\includegraphics[width=3.5in]{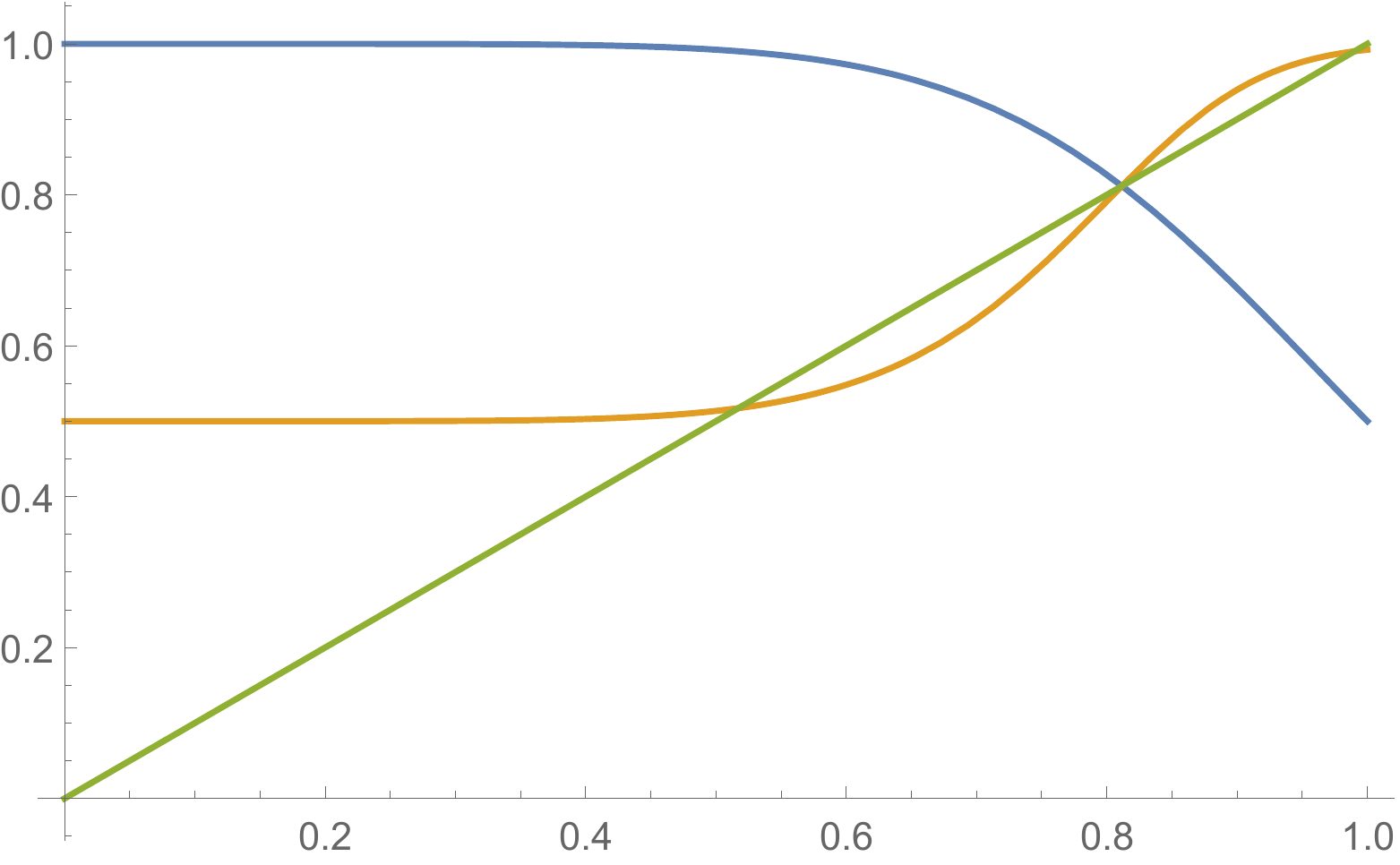}
		\caption{The interval map and its square for $k=7$}
		\label{fig:f7}
	\end{center}
\end{figure}

\begin{theorem}\label{thm:intervalmap}
	(i) For $k<k_0$ the map $T=T_k$ on $K_1$ has a fixed point $u=u_k$ which attracts all of $[0,1]$, and there are no other periodic orbits.\\
	(ii) For $k>k_0$ the map $T=T_k$ on $K_1$ has a repelling fixed point $u=u_k$, an attracting periodic orbit $\{p_1,p_2\}$ which attracts all of $[0,1]\setminus \{ u \}$, and no other periodic orbits.
\end{theorem}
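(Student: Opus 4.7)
The map $T = T_k$ is strictly decreasing on $[0,1]$ onto $[1/2,1]$, so it has a unique fixed point $u=u_k\in(1/2,1)$, namely the unique positive root of $u^{k+1}+u=1$. Using $1+u^k=1/u$, a direct computation gives $T'(u)=-ku^{k+1}=-k(1-u)$. My first step is to show that $|T'(u_k)|=k(1-u_k)$ is strictly increasing in $k$: implicit differentiation of $u^{k+1}+u=1$ yields $du/dk = -u^{k+1}\log u / [(k+1)u^k+1] > 0$, and a short calculation reduces the monotonicity of $k(1-u_k)$ to the elementary inequality $e^t>1+t$ for $t>0$, applied to $t=-\log u$. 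Hence $|T'(u_{k_0})|=1$ pins down a unique $k_0$, and substituting $u_{k_0}=(k_0-1)/k_0$ into $u^{k_0+1}=1-u$ produces the advertised equation $k_0 = 1+k_0^{k_0/(k_0+1)}$.

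Next, I would establish that $T$ has negative Schwarzian derivative. Writing $T=f\circ g$ with $g(x)=1+x^k$ and $f(y)=1/y$, the Möbius $f$ has $Sf\equiv 0$, while a direct calculation gives $Sg(x)=-(k^2-1)/(2x^2)<0$ for $k>1$. The composition rule $S(f\circ g) = (Sf\circ g)\,(g')^2 + Sg$ then yields $ST<0$ on $(0,1)$, and applying the rule once more shows $S(T^2)<0$.

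The heart of the argument concerns $T^2$ on the invariant interval $[1/2,1]$ (since $T([0,1])\subset[1/2,1]$, long-term dynamics live there). On $[1/2,1]$, $T^2$ is $C^\infty$, strictly increasing with nowhere-vanishing derivative (so no interior critical points), and satisfies $T^2(1/2)>1/2$ while $T^2(1)=T(1/2)<1$. By Singer's theorem, the immediate basin of each attracting fixed point of $T^2$ contains either a critical point of $T^2$ or an endpoint of $[1/2,1]$; since there are no interior critical points, each such basin must contain one of the two endpoints $\{1/2,1\}$, and as immediate basins are disjoint, at most two attracting fixed points exist. Combined with the alternation of attracting and repelling fixed points (forced by the monotonicity of $T^2$) and the boundary sign pattern $F(1/2)>0>F(1)$ for $F=T^2-\mathrm{Id}$ (which forces an odd number of fixed points with both extremes attracting), the total number of fixed points of $T^2$ is at most three: either $\{u\}$ alone or a symmetric triple $\{p_1,u,p_2\}$ with $p_1<u<p_2$, in which case $\{p_1,p_2\}$ is a 2-cycle of $T$.

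The two parts of the theorem now follow by inspecting which configuration is consistent with the stability of $u$. For part (i), $k<k_0$ gives $(T^2)'(u)=T'(u)^2<1$, so $u$ attracts under $T^2$; the triple configuration is excluded because alternation would force the middle fixed point $u$ to be repelling. Hence $u$ is the unique fixed point of $T^2$, and the monotone sequences $T^{2n}(1/2)\uparrow u$ and $T^{2n}(1)\downarrow u$, together with the monotonicity of $T^2$, force $T^{2n}(x)\to u$ for every $x\in[1/2,1]$ and hence for every $x\in[0,1]$. For part (ii), $k>k_0$ makes $(T^2)'(u)>1$, ruling out the singleton; the triple $\{p_1,u,p_2\}$ then occurs, with $p_1,p_2$ attracting for $T^2$ and forming an attracting 2-cycle of $T$. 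The same monotone argument gives $T^{2n}(x)\to p_1$ for $x\in[1/2,u)$ and $T^{2n}(x)\to p_2$ for $x\in(u,1]$; since $T(x)=u$ only when $x=u$, every $x\in[0,1]\setminus\{u\}$ is attracted to the 2-cycle. Because a strictly increasing map has no periodic orbits beyond its fixed points, any periodic orbit of $T$ must have period dividing two, so the above analysis rules out all other cycles. The main obstacle is the uniform bound on attracting fixed points of $T^2$ via Singer's theorem; the decisive observation is that restricting to the invariant interval $[1/2,1]$ removes all critical points and leaves exactly two endpoints, capping attracting fixed points at two and hence all fixed points at three.
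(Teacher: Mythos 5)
Your proposal is correct in substance, but it takes a genuinely different route from the paper. For $k<k_0$ the paper relies on a computer-assisted verification that $|DT^2(x)|\leq c_k<1$ on all of $[0,1]$, and for $k>k_0$ it establishes, by an explicit computation of $(T^2)''$ reduced to a sign analysis of a polynomial $j(y,k)$ with $y=x^k$, that $T^2$ has a unique inflection point, and then argues geometrically that a graph meeting the diagonal in five points would have to change concavity at least twice. You instead observe that $g(x)=1+x^k$ has Schwarzian $-(k^2-1)/(2x^2)<0$ while $y\mapsto 1/y$ is M\"obius, so $ST<0$ and $S(T^2)<0$; then Singer's theorem on the invariant interval $[1/2,1]$, where $T^2$ has no critical points, caps the number of non-repelling fixed points of $T^2$ at two, and with the boundary signs $T^2(1/2)>1/2$, $T^2(1)<1$ this bounds the total number of fixed points of $T^2$ by three. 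This buys a uniform argument for all $k$: it removes both the computer-assisted contraction estimate and the lengthy second-derivative computation, while the concluding monotone iteration of $T^2$ (which is increasing) is essentially the same as in the paper. Your preliminary facts also check out and agree with the paper: $T'(u)=-ku^{k+1}=-k(1-u)$, the strict monotonicity of $k\mapsto k(1-u_k)$ (the inequality $(k+1)u^k+1>k\log(1/u)$ follows from $\log(1/u)<1/u-1$), and the characterization $k_0=1+k_0^{k_0/(k_0+1)}$ via $u_{k_0}=(k_0-1)/k_0$.

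One step needs tightening: the ``alternation of attracting and repelling fixed points forced by the monotonicity of $T^2$'' is not forced by monotonicity alone, since an increasing map can have semi-stable fixed points at which $T^2-\mathrm{Id}$ has a zero without sign change, and then neither alternation nor an odd count follows. The correct justification again uses the negative Schwarzian you have already established. By the minimum principle for maps with $S<0$ and no critical points, if $q_1<q_2<q_3$ are consecutive fixed points of $T^2$, the mean value theorem gives points on either side of $q_2$ where $(T^2)'=1$, hence $(T^2)'(q_2)>1$ and $q_2$ is repelling; moreover two consecutive repelling fixed points are impossible because $T^2-\mathrm{Id}$ would have to vanish strictly between them. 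Since the extreme fixed points are non-repelling (by the boundary signs) and Singer's theorem, in the refined form covering neutral orbits (which for $S<0$ attract at least one side), allows at most two non-repelling fixed points, one gets at most three fixed points with the middle one repelling --- exactly what your case analysis for $k<k_0$ and $k>k_0$ requires. With this substitution your argument is complete.
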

\begin{proof}
	Since $T$ is decreasing and $T^2$ is increasing, we have $0 \to T0=1 \to T1= 1/2 \to \dots$, so that $T^{2n}0 \nearrow p_1$ and $T^{2n+1} 0 \searrow p_2$, with $p_1 \leq u \leq p_2$. 
	
	Moreover, $T[0,p_1]=[p_2,1]$, and $T[p_2,1]=[1/2,p_1]$. 
	Similarly, $T[p_1,u]=[u,p_2], T[u,p_2]=[p_1,u]$.
	
	Computer assisted computation shows that if $k<k_0$ then
	\be
	{|DT^2(x)| \leq c_k <1 \quad\text{for all }  x \in [0,1].}
	\en
	Thus for $k<k_0$, $p_1=u=p_2$ and all of $[0,1]$ is attracted to the unique fixed point $u$. 
	
	We show now that on the other hand, for $k>k_0$ the periodic cycle $\{p_1,p_2\}$ attracts all of $[0,1]$. 
	First, $[0,p_1] \cup [p_2,1]$ is attracted to $\{p_1,p_2\}$, because of the observations above. 
	
	Note further that if $k>k_0$, then at the fixed point $u$ of $Tx=1/(1+x^k)$ we have $|T'(u)|>1$. 
	Because consider the function $g(k)=k^{k/(k+1)}+1$ for $k \geq 0$.  
	We have $g(0)=2, g(k_0)=k_0, g'(k)<0$ for small $k$, $g'(k)=0$ when $\log k =k+1$ ($k=k_1 \approx 0.3$), $g'(k)>0$ for $k>k_1$. 
	Thus $g(k)>k$ for small $k$, there is a unique solution $k_0\approx 4.125$ of $g(k)=k$, and $g(5) \approx 4.82362<5$, so $g(k)<k$ for $k>k_0$. See Figure \ref{fig:g(k)}.
	\begin{figure}
		\begin{center}
			\includegraphics[width=3.5in]{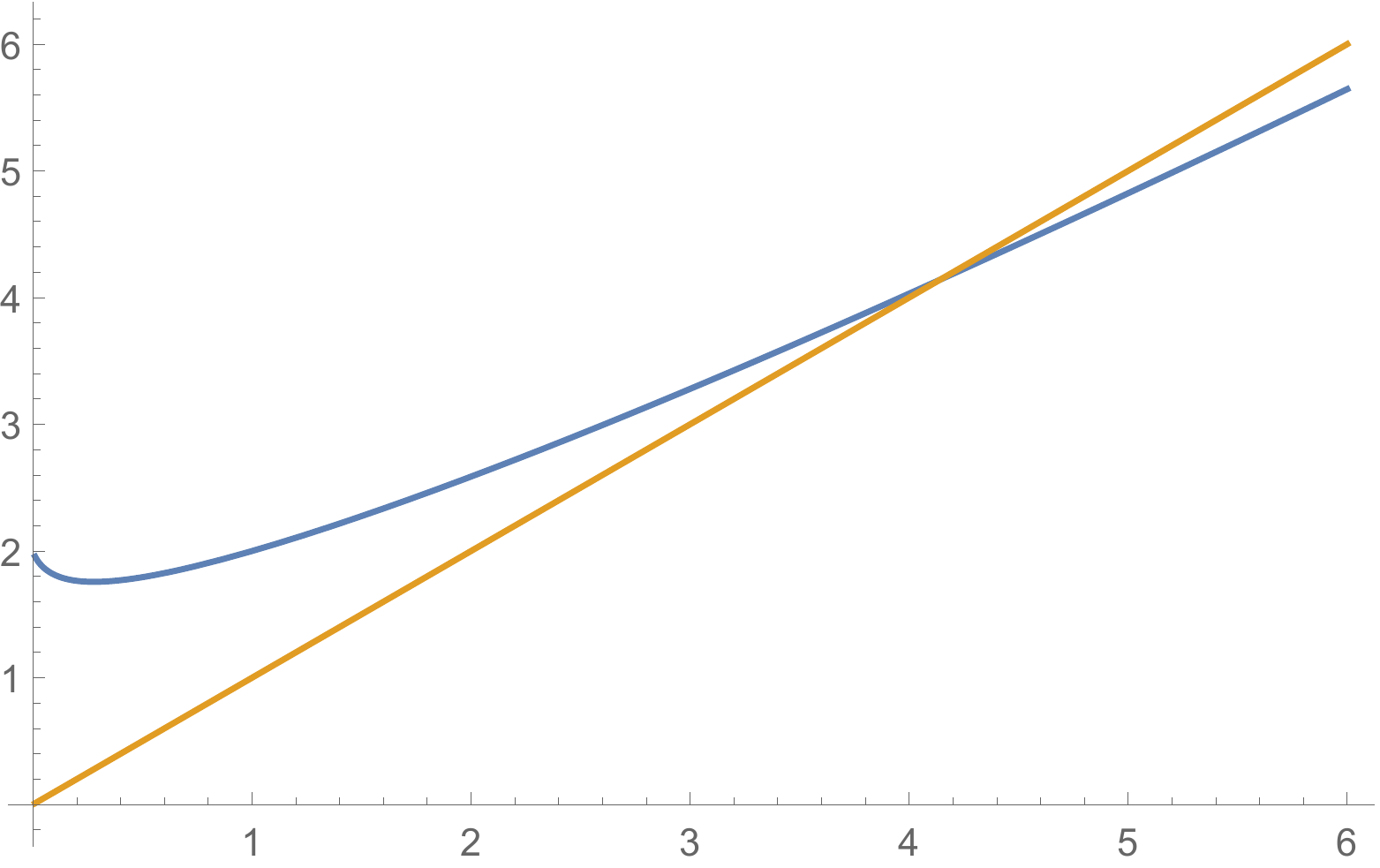}
			\caption{The function $g(k)$}
			\label{fig:g(k)}
		\end{center}
	\end{figure}
	
	This implies that for $k>k_0$ we have $|T_k'(u)|>1$.  
	Because for $k>k_0$ we have
	\be
	\begin{gathered}
	1+k^{k/(k+1)}<k, \quad k-1>k^{\frac{k}{k+1}}, \quad(k-1)^{k+1}>k^k,\\
	\frac{(k-1)^{k+1}}{k^{k+1}}>\frac{1}{k}, \quad 1<1-\frac{1}{k}+(1-\frac{1}{k})^{k+1}=\frac{k-1}{k}+\left(\frac{k-1}{k}\right)^{k+1},\\
	\frac{1}{1+\left(\frac{k-1}{k}\right)^k}<\frac{k-1}{k};
	\end{gathered}
	\en
	which is to say that
	\be
	T_k\left(\frac{k-1}{k}\right) < \frac{k-1}{k},
	\en
	so that $(k-1)/k>u, k-ku>1$ for $k>k_0$. 
	But
	\be
	T_k'(u)=\frac{-ku^{k-1}}{(1+u^k)^2}=-ku^{k+1}
	\en
	since $u^{k+1}=1-u$.
	Therefore 
	\be
	|T_k'(u)|=ku^{k+1}=k-ku>1.
	\en

	Since $|T'(u)|>1$, there are $\delta, \epsilon>0$ such that $|Tx-u| \geq (1+\delta)|x-u|$ for $|x-u|\leq \epsilon$. 
	If we take a point $x>u$ close to $u$, we have $Tx<u, T^2x>u$. 
	We must have $u<x<T^2x$, because otherwise $u<T^2x<x$ and
	\be
	|T^2x-u| \geq (1+\delta)|Tx-u| \geq (1+\delta)^2|x-u| \geq |x-u|,
	\en
	a contradiction. 
	Thus $T^2$ moves points near $u$ away from $u$, towards either $p_1$ or $p_2$, depending on whether $x \in (p_1,u)$ or $x \in (u,p_2)$.
	
	We show now that the $2$-cycle $\{ p_1, p_2\}$ attracts all of $[0,1] \setminus \{ u \}$. 
	This will follow if we can show that (for $k > k_0$) $T^2$ has a unique inflection point.
	Because from above, we have that for $p_1 < x < u$ and $x$ near $u$, $p_1 < T^2x < x < u$.
	Since $T^2$ is increasing, even though maybe no longer is $T^2x \approx u$, we may iterate to find 
	\be
	p_1 < T^4x < T^2x <x <u,
	\en
	so that $T^{2n}$ decreases to a fixed point $p \geq p_1$ of $T^2$. 
	
	But if $T^2$ has a unique inflection point in $[0,1]$, we must have $p=p_1$. 
	This is because then the graph of $T^2$ hits the line $y=x$ at least at $p_1, p, u, Tp, p_2$, and so must change concavity at  least twice. 
	This shows that all of $(p_1,u)$ is attracted to $p_1$ under $T^2$, and therefore all of $[0,1] \setminus \{u \}$ is attracted to $\{p_1, p_2\}$ under $T$.
	
	We show now that $T^2$ has a unique inflection point in $[0,1]$. 
	With the help of {\em Sage}, we find that  
	\be
	T^2=\frac{(x^k + 1)^k}{(1+(x^k + 1))^k},
	\en
	\be
	(T^2)'=\frac{k^2 (\frac{x}{x^k+1})^{k-1}}{(x^k+1)^2(\frac{1}{(x^k+1)^k}+1)^2},
	\en
	\be
	(T^2)''=\frac{(k^2(\frac{x}{(x^k + 1)})^k - k^2x^k - (\frac{x}{(x^k + 1)}))^k + \frac{k}{(x^k + 1)^k} + k - x^k - 
		\frac{1}{(x^k + 1)^k} - 1)k^2\frac{x^{k - 2}}{(x^k + 1)^k}}{(x^k + 1)^2(\frac{1}{(x^k + 1)^k} + 1)^3}.
	\en
	Let $y=x^k$ and
	\be
	n1(x,k)=(T^2)''((x^k + 1)^2((1/(x^k + 1)^k + 1)^3),
	\en
	\be
	n2(x,k)=k^2(\frac{x}{(x^k + 1)})^k - k^2x^k - (\frac{x}{(x^k + 1)})^k + \frac{k}{(x^k + 1)^k} + k - x^k - 
	\frac{1}{(x^k + 1)^k} - 1,
	\en
	\be
	n3(x,k)=n2(x,k)(x^k+1)^k,
	\en
	\be
	n4(x,k)=k^2x^k(1-(x^k+1)^k)-x^k+k+(k-x^k)(x^k+1)^k-1-(x^k+1)^k, \quad\text{ and }
	\en
	\be
	j(y,k)=k^2y(1-(y+1)^k)-y+k+(k-y)(y+1)^k-1-(y+1)^k,  
	\en
	all of which have the same sign as $(T^2)''$ on $[0,1]$.
	
	Now $j(y,k)$ is a polynomial in $y$ of degree $k+1$. The constant term and coefficient of $y$ are positive, and 
	for $j>1$ the coefficient of $y^j$ is
	\be
	(k-1) \binom{k}{j} - (k^2+1) \binom{k}{j-1},
	\en
	which is negative for $k>2$. 
	So the second derivative of $j(y,k)$ is negative, its first derivative starts positive at $0$ and later is negative so that $j$ has a unique zero on the positive axis.
	Since $(T^2)''$ has the same sign as $j$, $T$ has a unique inflection point. 
	Therefore, from above, for $k>k_0$ $T$ has a unique fixed point and an attracting cycle of period $2$ which attracts everything else, and there are no other periodic points. 
\end{proof}

\begin{comment}
Computer assisted computation shows that if $k>k_0$ then
\be
|DT^2(x)| \geq c >1 \quad\text{for all }  x \in [0,1].
\en
Therefore the 2-cycle also attracts $(p_1,u) \cup (u,p_2)$, 
for all $x \in [0,1]$ we have $T^{4n}x$ converges either to $p_1$ or $p_2$, 
and there are no further periodic points.
\end{comment}

\section{Intermediate entropy}
{For a tree labeled by elements of a finite alphabet $A=\{0,1,\dots ,d-1\}$, 
one can count numbers of possible labelings not just for the fundamental height $n$ subtrees but for any finite patterns. 
Focusing in this section on the binary regular tree, 
we are interested in particular patterns which consist of a translate of $\Delta_n$ and an `initial' segment of the next row (the final row of the corresponding translate of $\Delta_{n+1}$. 
Let us label the nodes $\eta$ left to right in each row, continuing downward from row to row: so $\epsilon \leftrightarrow \eta(1), 0\leftrightarrow\eta(2), 1\leftrightarrow\eta(3), 00\leftrightarrow\eta(4), 01\leftrightarrow\eta(5), 10\leftrightarrow\eta(6), 11\leftrightarrow\eta(7), 000\leftrightarrow\eta(8), ...$.  
(Recall that nodes of the tree correspond to words in $\{0,1\}^*$---see Section \ref{sec:setup}). }
Define $q(n)$ to be the number of labelings of $\eta(1) \eta(2) \dots \eta(n)$ in the tree shift. 
(For a single labeled tree, we define $q(n)$ to be the number of labelings found among all translates of $\eta(1) \eta(2) \dots \eta(n)$.)
For $n=0,1,\dots$ let $c_n=|\Delta_n|=2^{n+1}-1$. Thus
\be
q(c_n)=p(n) \quad\text{for all } n=0,1,\dots .
\en
We ask whether the {\em intermediate entropy} $h^i=\lim_{n \to \infty} \log q(n)/n$ exists (and hence equals the entropy $h=\lim_{n \to \infty} \log q(c_n)/c_n$ of the tree shift or labeled tree). 
For now let us define the {\em upper and lower intermediate entropies} of a tree shift or labeled tree to be 
\be
h^u= \limsup_{n \to \infty} \frac{\log q(n)}{n} \quad\text{ and } h^l= \liminf_{n \to \infty} \frac{\log q(n)}{n}.
\en
\begin{proposition}
	{The upper intermediate entropy of a $2$-tree shift or labeled $2$-tree cannot exceed its entropy: $h^u \leq h$.}
\end{proposition}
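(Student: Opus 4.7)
The plan is to establish the quantitative estimate $\log q(n) \leq (h+\epsilon)\,n + C_\epsilon$ for every $n \geq 1$ and every $\epsilon > 0$; dividing by $n$ and letting $n \to \infty$ then $\epsilon \downarrow 0$ gives $h^u \leq h$. The naive bounds are too weak: the containment $\eta(1)\cdots\eta(n) \subset \Delta_{m+1}$ (for the unique $m$ with $c_m \leq n < c_{m+1}$) yields $q(n) \leq p(m+1)$, which only forces $h^u \leq 2h$, since $c_{m+1}/n$ can be close to $2$; the node-by-node estimate $q(c_m+j) \leq p(m)\,d^{j}$ wastes $j(\log d - h)$, which can be of order $n$. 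A multi-scale decomposition is therefore needed.

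Given $\epsilon > 0$, I would fix $N$ so large that $\log p(M)/c_M \leq h+\epsilon$ for every $M \geq N$ (possible by Theorem~\ref{thm:lim=inf}), and set $M_1 := 2N+1$. For any $n$ with $c_m \leq n < c_{m+1}$ and $m \geq M_1$, write $j := n - c_m = q \cdot 2^{m-N} + r'$ with $0 \leq q < R := 2^{N+1}$ and $0 \leq r' < 2^{m-N}$, and partition $\eta(1)\cdots\eta(n)$ into disjoint pieces: the top $\Delta_N$, together with, for each of the $R$ subtrees rooted at a row-$(N+1)$ vertex, the sub-pattern obtained by intersecting that subtree with $\eta(1)\cdots\eta(n)$. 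The first $q$ of these subtree-pieces are full copies of $\Delta_{m-N}$; the $(q+1)$st is a BFS-prefix of size $c_{m-N-1}+r'$; each of the remaining $R-q-1$ is a full $\Delta_{m-N-1}$. Since any labeling of the whole restricts injectively to a labeling of each piece, shift-invariance yields
\be
q(n) \leq p(N)\, p(m-N)^{q}\, q(c_{m-N-1}+r')\, p(m-N-1)^{R-q-1}.
\en

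Since $N$, $m-N$, and $m-N-1$ are all at least $N$, applying $\log p(M) \leq (h+\epsilon)\,c_M$ to each of the three $p$-factors and bounding the lone $q$-factor by strong induction on $n$ (its argument is $<c_{m-N}\leq n$) yields the desired estimate, because the piece sizes sum to exactly $n$, so the coefficient of $(h+\epsilon)$ telescopes to $n$. The base case $n \leq c_{M_1}$ is absorbed by the crude bound $q(n) \leq d^{n}$, so I would take $C_\epsilon := c_{M_1}\log d$. The main obstacle is locating a disjoint decomposition whose pieces sit at a scale where $\log p/c$ is already close to $h$, yet funnels all the slack into a single sub-piece of strictly smaller size; the BFS structure of $\eta(1)\cdots\eta(n)$ is what makes this possible, because its intersection with each row-$(N+1)$ subtree is itself a BFS-prefix.
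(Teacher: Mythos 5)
Your proof is correct, and it reaches the bound by a somewhat different route than the paper's. The paper fixes $r$ with $2^{-r}<\epsilon$, first rounds the length of the partial last row up to a multiple of $2^{n-r+1}$ (in its notation, using monotonicity of $q$), and then decomposes the resulting pattern in one shot, along the path to the cut point, into disjoint translates of $\Delta_t$ over a whole range of scales, at most $n$ free nodes (a $d^n$ factor), and one small leftover piece, closing the estimate with an averaging step ($\log q(c_t)/c_t<h+\epsilon$ ``for most $t$'') together with the rounding error. You instead cut once at the fixed depth $N+1$: each row-$(N+1)$ subtree meets the BFS-prefix in a full $\Delta_{m-N}$ or $\Delta_{m-N-1}$ (all scales $\geq N$, where $\log p/c\leq h+\epsilon$), except for exactly one subtree whose intersection is again a BFS-prefix, and that single deficient piece is absorbed by strong induction on $n$; since the piece sizes sum to exactly $n$ and only one piece carries the additive constant, you get the uniform bound $\log q(n)\leq (h+\epsilon)n+C_\epsilon$ with no rounding, no free nodes, and no averaging. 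Your bookkeeping is cleaner and gives effective estimates valid for every $n$, at the price of running an induction; the paper's one-shot decomposition avoids induction and makes the multiscale geometry of the pattern explicit, and either decomposition serves equally well for Corollary \ref{cor:intent}, where the inequalities become equalities for tree SFTs because labelings of the disjoint subtrees are independent given the configuration above them. One minor point: the input you need, $\log p(M)/c_M\leq h+\epsilon$ for all $M\geq N$, comes from the existence of the limit defining $h$ (quoted from the authors' earlier paper) rather than from Theorem \ref{thm:lim=inf} itself, which only identifies that limit with the infimum; the fact is available either way, so this is a citation quibble, not a gap.
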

\begin{proof}
	Fix $n$ and $j=0, \dots, 2^{n+1}$ and consider the pattern $\Delta_n(j)$ that consists of $\Delta_n$ together with an initial segment of length $j$ of the next row in $\Delta_{n+1}$. 
	We wish to estimate from above the number of possible labelings of  $\Delta_n(j)$ that can be found in the tree shift. 
	Consider first the case when $j=2^n$, i.e. we use half of the next row; see Figure \ref{fig:halfway}.  
	
	\
	
	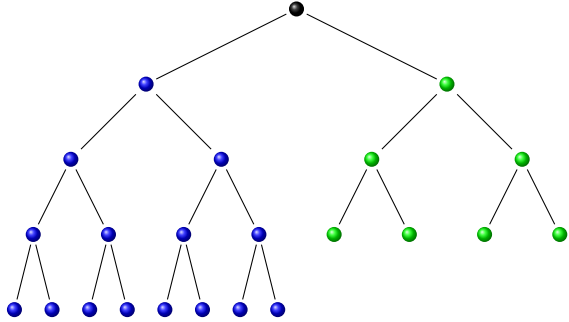
\begin{figure}[h]
		\begin{tikzpicture}
		\node at (0,0)  (v) {};
		\shade [ball color=black] (v) circle (0.1);
		\node at (-2,-1) (v0) {};
		\shade [ball color=blue] (v0) circle (0.1);
		\node at (2,-1)  (v1) {};
		\shade [ball color=green] (v1) circle (0.1);
		\node at (-3,-2) (v00) {};
		\shade [ball color=blue] (v00) circle (0.1);
		\node at (-1,-2) (v01) {};
		\shade [ball color=blue] (v01) circle (0.1);
		\node at (1,-2) (v10) {};
		\shade [ball color=green] (v10) circle (0.1);
		\node at (3,-2) (v11) {};
		\shade [ball color=green] (v11) circle (0.1);
		\node at (-3.5,-3) (v000) {};
		\shade [ball color=blue] (v000) circle (0.1);
		\node at (-2.5,-3) (v001) {};
		\shade [ball color=blue] (v001) circle (0.1);
		\node at (-1.5,-3) (v010) {};
		\shade [ball color=blue] (v010) circle (0.1);
		\node at (-.5,-3) (v011) {};
		\shade [ball color=blue] (v011) circle (0.1);
		\node at (.5,-3) (v100) {};
		\shade [ball color=green] (v100) circle (0.1);
		\node at (1.5,-3) (v101) {};
		\shade [ball color=green] (v101) circle (0.1);
		\node at (2.5,-3) (v110) {};
		\shade [ball color=green] (v110) circle (0.1);
		\node at (3.5,-3) (v111) {};
		\shade [ball color=green] (v111) circle (0.1);
		%Left half of last row
		\node at (-3.75,-4) (v0000) {};
		\shade [ball color=blue] (v0000) circle (0.1);
		\node at (-3.25,-4) (v0001) {};
		\shade [ball color=blue] (v0001) circle (0.1);
		\node at (-2.75,-4) (v0010) {};
		\shade [ball color=blue] (v0010) circle (0.1);
		\node at (-2.25,-4) (v0011) {};
		\shade [ball color=blue] (v0011) circle (0.1);
		\node at (-1.75,-4) (v0100) {};
		\shade [ball color=blue] (v0100) circle (0.1);
		\node at (-1.25,-4) (v0101) {};
		\shade [ball color=blue] (v0101) circle (0.1);
		\node at (-.75,-4) (v0110) {};
		\shade [ball color=blue] (v0110) circle (0.1);
		\node at (-.25,-4) (v0111) {};
		\shade [ball color=blue] (v0111) circle (0.1);
		
		\draw (v) -- (v0);
		\draw (v) -- (v1);
		\draw (v0) -- (v00);
		\draw (v0) -- (v01);
		\draw (v1) -- (v10);
		\draw (v1) -- (v11);
		\draw (v00) -- (v000);
		\draw (v00) -- (v001);
		\draw (v01) -- (v010);
		\draw (v01) -- (v011);
		\draw (v10) -- (v100);
		\draw (v10) -- (v101);
		\draw (v11) -- (v110);
		\draw (v11) -- (v111);
		%To last row
		\draw (v000) -- (v0000);
		\draw (v000) -- (v0001);
		\draw (v001) -- (v0010);
		\draw (v001) -- (v0011);
		\draw (v010) -- (v0100);
		\draw (v010) -- (v0101);
		\draw (v011) -- (v0110);
		\draw (v011) -- (v0111);
		\end{tikzpicture}
		\caption{Including half of the next row.}
		\label{fig:halfway}
	\end{figure}

	$\Delta_n(2^n)$ consists of the root, a translate of $\Delta_{n}$ (the blue nodes in Figure \ref{fig:halfway}), and a translate of $\Delta_{n-1}$ (the green nodes in Figure \ref{fig:halfway}). 
	There are $d$ choices for labels of the root, $q(c_{n})$ choices for labelings of $\Delta_{n}$, and $q(c_{n-1})$ choices for labelings of $\Delta_{n-1}$, so
	\be
	q(c_n+2^n) \leq d q(c_n) q(c_{n-1}) .
	\en
	Since $c_n + c_{n-1}=c_n+2^n-1$,
	\be
	\lim_{n \to  \infty} \frac{\log q(c_n)}{c_n} = h,
	\en
	and
	\be
	\frac{\log q(c_n+2^n)}{c_n+2^n} \leq \frac{\log d}{c_n+2^n} + \frac{c_n}{c_n+2^n} \frac{\log q(c_n)}{c_n} + \frac{c_{n-1}}{c_n+2^n}\frac{\log q(c_{n-1})}{c_{n-1}},
	\en
	we have
	\be
	\limsup_{n \to \infty} \frac{\log q(c_n+2^n)}{c_n+2^n} \leq h.
	\en
	
	Let $\epsilon >0$, choose $r$ so that $1/2^r<\epsilon$, and suppose that $n>>r$..
	We divide the $2^{n+1}$ nodes on the last row of $\Delta_{n+1}$ into $2^r$ consecutive intervals, each of length $2^m$, where $m={n-r+1}$. 
	Suppose first that $j=k2^m$ for some $k=0,\dots, 2^r-1$.
	Write $k=k_0+k_12+\dots +k_r2^{r-1}$ with each $j_i=0$ or $1$ and let $i_0=\inf\{i: k_i \neq 0\}$.
	We proceed to decompose $\Delta_n(j)$ as a subset of the disjoint union of the root and at most $n$ other nodes, a translate of $\Delta_{i_0}(2^{i_0})$, and translates of $\Delta_{i_0}, \Delta_{i_0+1}, \dots, \Delta_{n}$. 
	Note that $|\Delta_{i_0}(2^{i_0})|+|\Delta_{i_0}| + |\Delta_{i_0+1}| + \dots + |\Delta_{n}|$ is within $n$ of $|\Delta_n(j)|$.
	Figure \ref{fig:5/8} shows an example with $n=3,r=3,j=10,r=3,m=1,k=5=1+0\cdot 2+1\cdot2^2, i_0=0$. 
	Here the green nodes form a translate of $\Delta_{i_0}(2^{i_0})$ (a subset of a translate of $\Delta_2$), the red nodes form a translate of $\Delta_{i_0}$, the blue nodes form a translate of $\Delta_n$, and  the black nodes are the ``free'' ones. 
	
	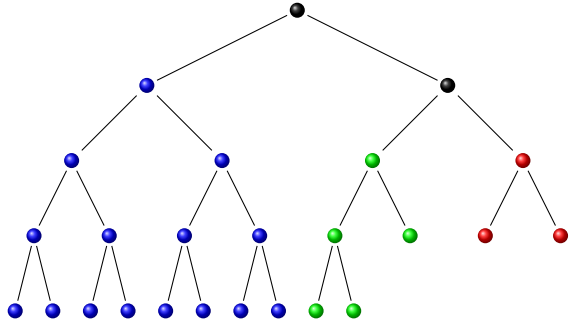
\begin{figure}[h]
		\begin{tikzpicture}
		\node at (0,0)  (v) {};
		\shade [ball color=black] (v) circle (0.1);
		\node at (-2,-1) (v0) {};
		\shade [ball color=blue] (v0) circle (0.1);
		\node at (2,-1)  (v1) {};
		\shade [ball color=black] (v1) circle (0.1);
		\node at (-3,-2) (v00) {};
		\shade [ball color=blue] (v00) circle (0.1);
		\node at (-1,-2) (v01) {};
		\shade [ball color=blue] (v01) circle (0.1);
		\node at (1,-2) (v10) {};
		\shade [ball color=green] (v10) circle (0.1);
		\node at (3,-2) (v11) {};
		\shade [ball color=red] (v11) circle (0.1);
		\node at (-3.5,-3) (v000) {};
		\shade [ball color=blue] (v000) circle (0.1);
		\node at (-2.5,-3) (v001) {};
		\shade [ball color=blue] (v001) circle (0.1);
		\node at (-1.5,-3) (v010) {};
		\shade [ball color=blue] (v010) circle (0.1);
		\node at (-.5,-3) (v011) {};
		\shade [ball color=blue] (v011) circle (0.1);
		\node at (.5,-3) (v100) {};
		\shade [ball color=green] (v100) circle (0.1);
		\node at (1.5,-3) (v101) {};
		\shade [ball color=green] (v101) circle (0.1);
		\node at (2.5,-3) (v110) {};
		\shade [ball color=red] (v110) circle (0.1);
		\node at (3.5,-3) (v111) {};
		\shade [ball color=red] (v111) circle (0.1);
		%Left half of last row
		\node at (-3.75,-4) (v0000) {};
		\shade [ball color=blue] (v0000) circle (0.1);
		\node at (-3.25,-4) (v0001) {};
		\shade [ball color=blue] (v0001) circle (0.1);
		\node at (-2.75,-4) (v0010) {};
		\shade [ball color=blue] (v0010) circle (0.1);
		\node at (-2.25,-4) (v0011) {};
		\shade [ball color=blue] (v0011) circle (0.1);
		\node at (-1.75,-4) (v0100) {};
		\shade [ball color=blue] (v0100) circle (0.1);
		\node at (-1.25,-4) (v0101) {};
		\shade [ball color=blue] (v0101) circle (0.1);
		\node at (-.75,-4) (v0110) {};
		\shade [ball color=blue] (v0110) circle (0.1);
		\node at (-.25,-4) (v0111) {};
		\shade [ball color=blue] (v0111) circle (0.1);
		
		\node at (.25,-4) (v1000) {};
		\shade [ball color=green] (v1000) circle (0.1);
		\node at (.75,-4) (v1001) {};
		\shade [ball color=green] (v1001) circle (0.1);
		
		\draw (v) -- (v0);
		\draw (v) -- (v1);
		\draw (v0) -- (v00);
		\draw (v0) -- (v01);
		\draw (v1) -- (v10);
		\draw (v1) -- (v11);
		\draw (v00) -- (v000);
		\draw (v00) -- (v001);
		\draw (v01) -- (v010);
		\draw (v01) -- (v011);
		\draw (v10) -- (v100);
		\draw (v10) -- (v101);
		\draw (v11) -- (v110);
		\draw (v11) -- (v111);
		%To last row
		\draw (v000) -- (v0000);
		\draw (v000) -- (v0001);
		\draw (v001) -- (v0010);
		\draw (v001) -- (v0011);
		\draw (v010) -- (v0100);
		\draw (v010) -- (v0101);
		\draw (v011) -- (v0110);
		\draw (v011) -- (v0111);
		
		\draw (v100) -- (v1000);
		\draw (v100) -- (v1001);
		\end{tikzpicture}
		\caption{Using 5/8 of the next row.}
		\label{fig:5/8}
	\end{figure}
	
	This decomposition yields the estimate
	\be
	q(c_n+j) \leq d^n q(c_{i_0}) q(c_{i_0-1}) \prod_{t=i_0}^{n-1} q(c_t).
	\en
	This implies that with $r$ fixed, once $n$ is large enough that
	\be
	\frac{\log(q(c_t))}{c_t} < h+ \epsilon \quad \text{ for most } t=i_0, \dots, n-1,
	\en 
	we will have for all $j=k2^m$ for some $k=0,\dots, 2^r-1$,
	\be
	\frac{\log q(c_n+j)}{c_n+j} < h + 2\epsilon.
	\en
	
	Continuing with $r$ fixed, we consider now values of $j$ which are not multiples of $2^m$. 
	For any such $j=1, \dots, 2^{n+1}-1$, let $j'=\inf\{k2^m>j\}$. Then
	\be
	\begin{aligned}
		\frac{\log q(c_n+j)}{c_n+j} &\leq \frac{\log q(c_n+j')}{c_n+j'}\frac{c_n+j'}{c_n+j}
		< (h+\epsilon)  \frac{c_n+j+2^m}{c_n+j}\\
		&= (h+\epsilon)(1 +\frac{2^m}{c_n}) < h+3 \epsilon 
	\end{aligned}
	\en
	for large enough $n$. 
\end{proof}

\begin{corollary}\label{cor:intent}
	Let $Z=Z(X)$ be the 2-tree shift corresponding to a 1-dimensional shift of finite type $X$. Then the intermediate topological entropy of $Z(X)$ equals its topological entropy: $h^l(Z(X)) \geq h(Z(X))\geq h^u(Z(X)) \geq h(X)$.
	More generally, if $Z$ is a tree SFT (determined by excluding a finite set of blocks), then its intermediate topological entropy equals its topological entropy: $h^i(Z)=h(Z)$. 
\end{corollary}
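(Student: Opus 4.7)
The plan is to verify the inequality chain
$$ h^l(Z(X)) \geq h(Z(X)) \geq h^u(Z(X)) \geq h(X) $$
link by link. Three of the four inequalities come almost for free: the preceding Proposition gives $h^u(Z(X)) \leq h(Z(X))$, Corollary \ref{cor:entcomp} gives $h(Z(X)) \geq h(X)$, and the elementary bound $h^l \leq h^u$ combined with these forces, once the reverse inequality $h^l \geq h$ is established, the collapse $h^l = h^u = h$ and hence the entire chain. The generalization $h^i(Z) = h(Z)$ for an arbitrary tree SFT $Z$ then follows from the same combination: the upper bound of the Proposition extends verbatim to any tree SFT, and the lower bound developed below uses only that $Z$ is an SFT.

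The substantive work is therefore the lower bound $h^l(Z) \geq h(Z)$. After a higher-block recoding---which leaves $h^l$, $h^u$, and $h$ unchanged, as in the proof of Corollary \ref{cor:entcomp}---we may assume $Z$ is a nearest-neighbor tree SFT on the alphabet $A$ of size $d$. Fix $\epsilon > 0$ and $n$ large, and write $n = c_m + j$ with $0 \leq j \leq 2^{m+1}$, so that $c_m \leq n \leq c_{m+1}$. The strategy mirrors, in reverse, the decomposition used in the Proposition: expanding $j$ to binary accuracy $2^{m-r+1}$ for a cutoff $r = r(\epsilon)$, one views $\Delta_m(j)$ as a disjoint union of a translate of $\Delta_{i_0}(2^{i_0})$, disjoint translates of $\Delta_{i_0}, \Delta_{i_0+1}, \dots, \Delta_m$, and $O(m)$ ``spine'' vertices linking the pieces; the total vertex count of the pieces is within $O(m)$ of $n$. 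Fixing the labels at the spine vertices decouples the pieces, and pigeonholing over the $d^{O(m)}$ possible spine labelings yields, for at least one choice,
$$ q(c_m + j) \;\geq\; d^{-O(m)}\, q^\ast(i_0, 2^{i_0}) \prod_{t=i_0}^{m} p^\ast(t), $$
where $p^\ast(t) \geq p(t)/d$ is the number of labelings of $\Delta_t$ with a prescribed root label (chosen to maximize), and $q^\ast(i_0, 2^{i_0})$ is its analog for $\Delta_{i_0}(2^{i_0})$. Invoking $p(t) \geq e^{hc_t}$ on each piece, which is precisely the content of Theorem \ref{thm:lim=inf}, and using that the piece sizes sum to $n - O(m)$, the right side is at least $e^{hn - O(m)}$; dividing by $n$ and letting $n \to \infty$ (so that $m \to \infty$ and $O(m)/n = O(m)/2^m \to 0$) gives $\liminf_n \log q(n)/n \geq h - \epsilon$, which is the required bound.

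The main obstacle is the independence of labeling counts across the disjoint pieces. On the full shift $A^{\Sigma_k^*}$ this is automatic, but on an SFT neighboring pieces share a spine vertex whose label constrains both adjacent pieces simultaneously, so the raw product inequality is false. The remedy is the conditioning-plus-pigeonhole step sketched above: fixing the $O(m)$ spine labels renders the pieces truly independent, at a negligible logarithmic cost of $O(m) \log d = o(n)$ after dividing by $n$. A secondary technical point is the piece-by-piece lower bound $p^\ast(t) \geq p(t)/d$, which is automatic from $\sum_{a \in A} p_a(t) = p(t)$ and $|A| = d$. Once this conditioning lemma is in place, the calculation is essentially dual to the Proposition's, and the chain of inequalities closes.
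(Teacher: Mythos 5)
Your proposal is correct and takes essentially the same route as the paper: the paper's proof is precisely the observation that, after a higher-block recoding to the one-step case, labelings of the disjoint pieces in the preceding Proposition's decomposition are independent given the (spine) configuration above them, so the upper-bound inequalities become matching lower bounds up to $d^{O(m)}$ factors, which together with $p(t)\geq e^{h c_t}$ from Theorem \ref{thm:lim=inf} yields $h^l\geq h$ and closes the chain. The only difference is presentational: you spell out the conditioning and pigeonhole quantitatively (taking care, as one should, to pick the piece root symbols compatibly with a single allowed spine labeling rather than maximizing each piece independently), where the paper asserts the gluing/conditional-independence fact in one sentence.
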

\begin{proof}
	In a tree shift corresponding to a one-step one-dimensional shift of finite type, labelings of disjoint translates of basic patterns $\Delta_n$ are independent given the configurations above them, so the key inequalities in the above proof are actually equalities. For systems with more memory (still bounded), consider the appropriate higher block coding.
	
	For the general case of a tree SFT, labelings of sets of nodes in distinct branches of the tree are again independent, given a fixed allowed configuration above both of them: pairing two such allowed labelings produces an allowed labeling of their union. 	
\end{proof}

\section{Larger alphabets and more general labeling restrictions}\label{sec:general}
 Let $M$ be an irreducible $d \times d$ $\{0,1\}$ matrix and $Z_M$ the $k$-tree shift consisting of all $k$-trees labeled by elements of the alphabet $A=\{0, \dots, d-1\}$ consistently with the transitions allowed by $M$, as in Section \ref{sec:setup}. 
 We describe briefly the setup for applying the strip method to this general situation to study the topological entropy $h(Z_M)$ of the system $Z_M$. 
 
 For each $n=0,1,\dots$ let $x(n)=(x_0(n),\dots,x_{d-1}(n))$ denote the vector of symbol counts in which $x_i(n)$ is the number of labeled translates of $\Delta_n$ ($n$-blocks) with the symbol $i$ at the root. 
 Let $|x(n)|=x_0(n)+\dots+x_{d-1}(n)$
 As before, denoting by $M_i$ the $i$'th row of $M$, we have the recursion
 \be
 x_i(n+1)=(Mx(n))_i^k=[M_i \cdot x(n)]^k, \quad i=0,\dots,d-1.
 \en
Define
\be
g_k(x)= \sum_j(Mx)_j^k
\en
and the map $T_k: \mathbb R^d \to \mathbb R^d$ by
\be
(T_kx)_i= \frac{(Mx)_i^k}{\sum_j(Mx)_j^k}=\frac{(Mx)_i^k}{g_k(x)}, \quad i=0,1,\dots, d-1,
\en
and as before
\be
\begin{aligned}
	r(n)&=\frac{x(n)}{|x(n)|}, \quad\text{so that}\\
	r(n+1)&=T_kr(n).
\end{aligned}
\en

For the $1$-dimensional golden mean SFT, the map $T$ on $K_1=\{(x,y):x\geq 0, y\geq 0, x+y=1\}$ is
$T(x,y)=( (x+y)^2, x^2)/((x+y)^2+x^2)$. 
The equations $T(x,y)=(x,y)$ define an algebraic curve, part of which is shown in Figure \ref{fig:GMCurve} along with the simplex $x+y=1$. 
\begin{figure}
	\begin{center}
		\includegraphics[width=3.5in]{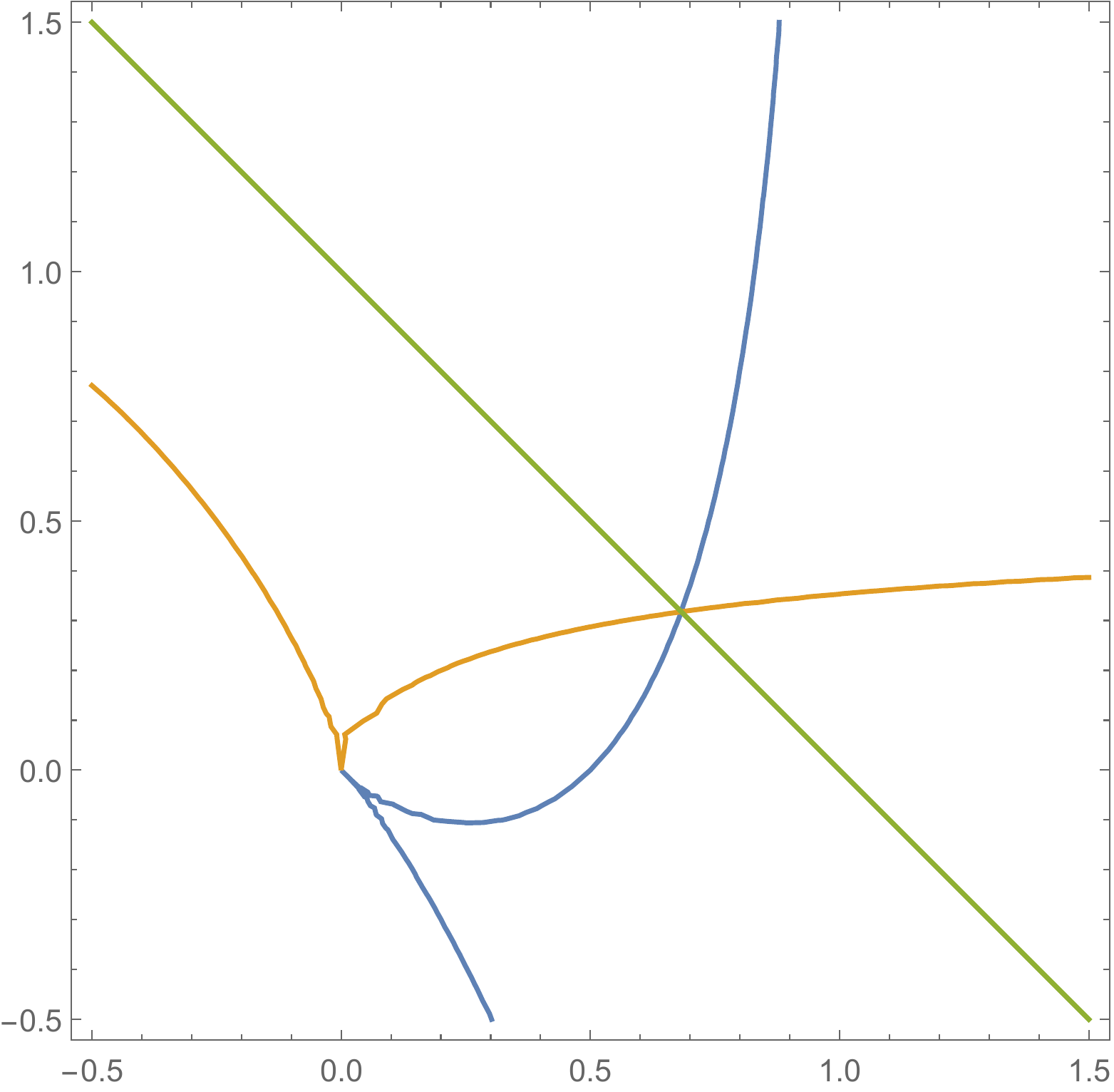}
		\caption{The graph of the fixed point equations for the golden mean}
		\label{fig:GMCurve}
	\end{center}
\end{figure}

%\section{Estimating entropy by recursion on logarithms}\label{sec:entest}

\begin{remark}
	When estimating the entropy $h^{(k)}$ of $Z_M$ numerically, 
iteration of the mapping $T_k$ combined with a recursion on $\log |x(n)|$ can reduce the size of the numbers involved and give us more accurate estimates more quickly. 
Since
\be
\begin{aligned}
	x_i(n+1)&=|x(n)|^k(Mr(n))_i^k, i=0, \dots, d-1,\\
	|x(n+1)|&=|x(n)|^k g_k(r(n)),
\end{aligned}
\en
we may form the $T_kr(n)$ by iterating $T_k$ and use them in the recursion
\be
\log |x(n+1)| = k \log |x(n)| + \log g_k(r(n))
\en
to form rapidly improving approximations to
\be
h = \lim_{n \to \infty} \frac{\log|x(n)|}{1+k+ \dots +k^n}.
\en
The factors $g_k(r(n))$ are bounded, but they have a cumulative effect on the growth of $|x(n)|$ that may be sufficient to affect the ultimate value of $h$.
\end{remark}

 {Let $C(-1)=M$ and for each $n \geq 0$ define a $d \times d$ matrix $C(n)$ by}
 \be
 C(n)_{ij}=M_{ij}(M_i \cdot x(n))^{k-1}.
 \en
 Denote by $\lambda_n$ the maximal eigenvalue of $C(n)$ and define $c_n$ by
\be
\lambda_n=|x(n)|^{k-1}c_n.
\en
The entropy of the one-dimensional $n$'th strip approximation subshift to $Z_M$ is 
\be
{h_n^{(k)}=\frac{\log \lambda_{n-1}}{k^n}.}
\en

\begin{theorem}\label{th:genent}
	With notation as above, the entropy of the $k$-tree shift corresponding to an irreducible $d \times d$ $0,1$ matrix $M$ is given by the following infinite series formula:
	\be
	h^{(k)}(Z_M)=\frac{k-1}{k}\log d+(k-1)\sum_{i=1}^{\infty}\frac{1}{k^{i+1}}
 	\log g_k(r(i-1)).
\en
\end{theorem}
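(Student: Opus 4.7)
The plan is to mimic the two-lemma derivation that preceded Theorem \ref{th:infseries}, but with $g_k(r)$ playing the role that $(1+r^k)^{k-1}$ played in the golden-mean case and with $\log d$ replacing $\log 2$. Everything flows from a single master recursion, which now takes a cleaner vector form than in the golden mean setting.

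First I would establish the master recursion for $|x(n)|$. From $x_i(n+1)=(Mx(n))_i^k$ and $x(n)=|x(n)|\,r(n)$, summing over $i$ gives
\[
|x(n+1)| \;=\; \sum_{i} (Mx(n))_i^k \;=\; |x(n)|^k\, g_k(r(n)),
\]
so that $\log|x(n+1)| = k\log|x(n)| + \log g_k(r(n))$. Since $|x(0)|=d$ (one labeling per symbol at the root), a straightforward induction on $n$ yields the closed form
\[
\log|x(n)| \;=\; k^n\log d \;+\; \sum_{i=0}^{n-1} k^{\,n-1-i}\log g_k(r(i)).
\]

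Next I would divide by $|\Delta_n|=(k^{n+1}-1)/(k-1)$ and take $n\to\infty$. By the definition of $h^{(k)}(Z_M)$ (using Theorem \ref{thm:lim=inf} for the fact that the limit exists as an infimum), the left-hand side tends to $h^{(k)}(Z_M)$. The coefficient of $\log d$ tends to $(k-1)/k$, and for each fixed $i\ge 0$ the coefficient of $\log g_k(r(i))$ is
\[
\frac{(k-1)\,k^{\,n-1-i}}{k^{n+1}-1} \;\longrightarrow\; \frac{k-1}{k^{\,i+2}}.
\]
Re-indexing $i\mapsto i-1$ then produces the claimed series.

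The single step requiring care is the exchange of limit and sum. Since $r(i)$ lies in the compact simplex $K_{d-1}$ and $g_k$ is continuous, the sequence $\log g_k(r(i))$ is uniformly bounded: one has $1\le g_k(r(i))\le d\,\|M\|_\infty^{\,k}$. Combined with the geometrically decaying weights $k^{n-1-i}/(k^{n+1}-1)$, which are dominated term-by-term by $k^{-i-2}$ uniformly in $n$, dominated convergence justifies passing the limit through the sum. This is the only technical point, and it is immediate from compactness; no analysis of the dynamics of $T_k$ (as carried out for the golden mean in Theorem \ref{thm:intervalmap}) is required. The argument therefore extends verbatim from the two-symbol golden-mean computation to this general irreducible $d\times d$ setting.
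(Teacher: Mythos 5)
Your argument is correct, but it takes a genuinely different route from the paper. The paper derives the series through the strip approximation entropies: it defines $h_n^{(k)}=\log\lambda_{n-1}/k^n$ via the Perron eigenvalue of the matrix $C(n-1)$, writes $\lambda_n=|x(n)|^{k-1}c_n$, proves the recursion $h_{n+1}^{(k)}=h_n^{(k)}+\frac{k-1}{k^{n+1}}\log g_k(r(n-1))+\frac{1}{k^{n+1}}\log c_n-\frac{1}{k^n}\log c_{n-1}$, sums by induction, and lets $n\to\infty$ using that the $\frac{1}{k^n}\log c_{n-1}$ term vanishes and that $h_n^{(k)}\to h^{(k)}(Z_M)$ (the analogue of Theorem \ref{th:striplim}). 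You instead telescope the scalar recursion $\log|x(n+1)|=k\log|x(n)|+\log g_k(r(n))$ from $|x(0)|=d$, divide by $|\Delta_n|$, and pass to the limit by dominated convergence. Your route is more elementary and self-contained: it never introduces $\lambda_n$ or $c_n$, and so avoids the boundedness-of-$c_n$ input that the paper's limiting step quietly relies on; what it loses is the byproduct that the partial sums of the series are (up to the $\frac{1}{k^n}\log c_{n-1}$ correction) exactly the strip entropies $h_n^{(k)}$, which is the link the paper exploits for monotonicity and lower bounds. Two small slips in your justification should be repaired, though neither affects validity. First, the claimed bound $1\le g_k(r(i))$ is false in general: for the irreducible permutation matrix $M=\left[\begin{smallmatrix}0&1\\1&0\end{smallmatrix}\right]$ one gets $g_k(r(n))=2^{1-k}<1$. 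What you actually need is that $g_k$ is bounded away from $0$ on the simplex, which does hold because an irreducible $M$ has no zero column: some coordinate of $r$ is at least $1/d$, so $g_k(r)\ge d^{-k}$, and then $|\log g_k(r(i))|$ is uniformly bounded as required. Second, the weights $(k-1)k^{n-1-i}/(k^{n+1}-1)$ are slightly larger than $(k-1)k^{-i-2}$, not dominated by it; but they are bounded by $k^{-i-1}$, which is summable, so the Tannery/dominated-convergence step still goes through.
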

\begin{proof}
We compute that
\be
h_{n+1}^{(k)}=h_n^{(k)}+\frac{k-1}{k^{n+1}}\log g_k(r(n-1))+\frac{1}{k^{n+1}}
\log c_{n}-\frac{1}{k^n}\log c_{n-1},
\en
\be
h_1^{(k)}= \frac{k-1}{k}\log d+\frac{1}{k}\log(c_0), 
\en
\be
h_2^{(k)} = \frac{k-1}{k}\log d+\frac{k-1}{k^2}\log g_k(r(0))+\frac{1}{k^2}\log c_1, 
\en
and then by induction that
\be
h_n^{(k)}=\frac{k-1}{k}\log d+(k-1)\sum_{i=1}^{n-2}
\frac{1}{k^{i+1}}\log g_k(r(i-1))+\frac{1}{k^n}\log c_{n-1}.
\en
\end{proof}

We showed before that for the golden mean SFT on the $k$-tree, the site specific entropies $h_n^{(k)}$ increase with the strip width to the tree shift entropy $h^{(k)}$ (at least for $k=2,\dots, 8$), and that $h^{(k)}$ is strictly increasing in $k$, with limit $\log 2$, in contrast with the situation for the golden mean SFT's on integer lattices.
 The same statement for the more general tree shifts considered in this section can be approached by the same techniques, although the formulas and computations will naturally be much more complex. 
 Extensions to pressure and equilibrium states, including measures of maximal entropy, and subshifts on other trees and graphs, are also attractive topics for further research.

\begin{ack*}
	We thank Professors Henk Bruin, Kevin McGoff, Tom Meyerovitch, Anthony Quas, and Ville Salo for helpful discussions on these topics.
\end{ack*}

\section{Appendix}\label{sec:appendix}
{Here are the algebraic calculations to show that $h_n^{(k)}$ is strictly increasing with $n$ for the tree  dimensions $k=5,6,7,8$. 
See Section \ref{sec:mon} for the definitions.}

	\be
\begin{aligned}
	n_5(r)&=-1+\frac{5 r^4}{2}-6 r^5+15 r^8-10 r^9-9 r^{10}+25 r^{12}-10 r^{13}-15 r^{14}-\\
	&6 r^{15}+\frac{25 r^{16}}{2}-15 r^{18}-10 r^{19}-\frac{r^{20}}{2}-10
	r^{23}-\\
	&\frac{5 r^{24}}{2}-\frac{5 r^{28}}{2}+\frac{5}{2} r^4 \sqrt{1+4 r^4}-2 r^5 \sqrt{1+4 r^4}+\\
	&10 r^8 \sqrt{1+4 r^4}-6 r^9 \sqrt{1+4 r^4}-3 r^{10}\\
	&\sqrt{1+4 r^4}+10 r^{12} \sqrt{1+4 r^4}-2 r^{13} \sqrt{1+4 r^4}-9 r^{14} \sqrt{1+4 r^4}-\\
	&2 r^{15} \sqrt{1+4 r^4}+\frac{5}{2} r^{16} \sqrt{1+4 r^4}-3
	r^{18} \sqrt{1+4 r^4}-6 r^{19} \sqrt{1+4 r^4}-\\
	&\frac{1}{2} r^{20} \sqrt{1+4 r^4}-2 r^{23} \sqrt{1+4 r^4}-\frac{3}{2} r^{24} \sqrt{1+4 r^4}-
	\frac{1}{2}	r^{28} \sqrt{1+4 r^4}   \\
	A_5(r)&= -1+\frac{5 r^4}{2}-6 r^5+15 r^8-10 r^9-9 r^{10}+25 r^{12}-10 r^{13}-15 r^{14}-\\
	&6 r^{15}+\frac{25 r^{16}}{2}-15 r^{18}-10 r^{19}-\frac{r^{20}}{2}-10 r^{23}-\frac{5 r^{24}}{2}-\frac{5 r^{28}}{2}   \\
	B_5(r)&= \frac{5}{2} r^4 \sqrt{1+4 r^4}-2 r^5 \sqrt{1+4 r^4}+10 r^8 \sqrt{1+4 r^4}-6 r^9 \sqrt{1+4 r^4}-\\
	&3 r^{10} \sqrt{1+4 r^4}+10 r^{12} \sqrt{1+4 r^4}-2 r^{13} \sqrt{1+4 r^4}-9 r^{14} \sqrt{1+4 r^4}-\\
	&2 r^{15} \sqrt{1+4 r^4}+\frac{5}{2} r^{16} \sqrt{1+4 r^4}-3 r^{18} \sqrt{1+4 r^4}-6 r^{19} \sqrt{1+4 r^4}-\\
	&\frac{1}{2} r^{20} \sqrt{1+4 r^4}-2 r^{23} \sqrt{1+4 r^4}-\frac{3}{2} r^{24} \sqrt{1+4 r^4}-\frac{1}{2} r^{28} \sqrt{1+4 r^4}   \\
	d_5(r)&= 1-5 r^4+12 r^5-30 r^8+50 r^{10}-50 r^{12}-100 r^{13}+80 r^{14}+108 r^{15}-25 r^{16}-\\
	&200 r^{17}-70 r^{18}+240 r^{19}+137 r^{20}-100 r^{21}-300	r^{22}+140 r^{23}+345 r^{24}+100 r^{25}-\\
	&150 r^{26}-200 r^{27}+320 r^{28}+300 r^{29}+22 r^{30}-100 r^{31}-50 r^{32}+300 r^{33}+170 r^{34}-44 r^{35}-\\
	&25	r^{36}+170 r^{38}+60 r^{39}-68 r^{40}+60 r^{43}+10 r^{44}-56 r^{45}+10 r^{48}-28 r^{50}-8 r^{55}-r^{60}   \\
	q_5(r)&=  1+2 r+3 r^2+4 r^3+8 r^5+18 r^6+30 r^7+14 r^8+28 r^{10}+72 r^{11}+\\
	&85 r^{12}+20 r^{13}+56 r^{15}+143 r^{16}+110 r^{17}+15 r^{18}+68 r^{20}+\\
	&148r^{21}+74 r^{22}+2 r^{23}+48 r^{25}+82 r^{26}+20 r^{27}-13 r^{28}+16 r^{30}+28 r^{31}-\\
	&10 r^{32}-20 r^{33}+7 r^{36}-10 r^{37}-15 r^{38}-2 r^{42}-6r^{43}-r^{48} \\
	&={1+2 r+3 r^2+4 r^3+8 r^5+13r^6(1-r^{22})+5r^6+10r^7(1-r^{25})+20r^7+}\\
		&{20r^{10}(1-r^{23}) +8r^{10}+10r^{11}(1-r^{26})+62r^{11}+15r^{12}(1-r^{26})+70r^{12}+}\\
		&{2r^{13}(1-r^{29})+18r^{13}+6r^{15}(1-r^{28})+50r^{15}+r^{16}(1-r^{32})+142r^{16}+110r^{17}+}\\
		&{15 r^{18}+68 r^{20}+148r^{21}+74 r^{22}+2 r^{23}+48 r^{25}+82 r^{26}+20 r^{27}+}\\
		&{14r^8+16r^{30}+28r^{31}+7r^{36}}
\end{aligned}
\en
\be
\begin{aligned}
	n_6(r)&= -1+3 r^5-\frac{15 r^6}{2}+\frac{45 r^{10}}{2}-15 r^{11}-15 r^{12}+55 r^{15}-\\
	&\frac{45 r^{16}}{2}-30 r^{17}-15 r^{18}+\frac{105 r^{20}}{2}-\\
	&5	r^{21}-45 r^{22}-30 r^{23}-\frac{15 r^{24}}{2}+18 r^{25}-10 r^{27}-45 r^{28}-\\
	&15 r^{29}-\frac{r^{30}}{2}-10 r^{33}-\frac{45 r^{34}}{2}-3 r^{35}-\\
	&5	r^{39}-\frac{9 r^{40}}{2}-r^{45}+3 r^5 \sqrt{1+4 r^5}-\frac{5}{2} r^6 \sqrt{1+4 r^5}+\\
	&\frac{33}{2} r^{10} \sqrt{1+4 r^5}-10 r^{11} \sqrt{1+4 r^5}-5	r^{12} \sqrt{1+4 r^5}+\\
	&28 r^{15} \sqrt{1+4 r^5}-\frac{15}{2} r^{16} \sqrt{1+4 r^5}-20 r^{17} \sqrt{1+4 r^5}-5 r^{18} \sqrt{1+4 r^5}+\\
	&\frac{35}{2} r^{20}	\sqrt{1+4 r^5}-15 r^{22} \sqrt{1+4 r^5}-20 r^{23} \sqrt{1+4 r^5}-\\
	&\frac{5}{2} r^{24} \sqrt{1+4 r^5}+3 r^{25} \sqrt{1+4 r^5}-15 r^{28} \sqrt{1+4 r^5}-\\
	&10	r^{29} \sqrt{1+4 r^5}-\frac{1}{2} r^{30} \sqrt{1+4 r^5}-\frac{15}{2} r^{34} \sqrt{1+4 r^5}-\\
	&2 r^{35} \sqrt{1+4 r^5}-\frac{3}{2} r^{40} \sqrt{1+4 r^5}  \\
	A_6(r)&= -1+3 r^5-\frac{15 r^6}{2}+\frac{45 r^{10}}{2}-15 r^{11}-15 r^{12}+55 r^{15}-\frac{45 r^{16}}{2}-30 r^{17}-\\
	&15 r^{18}+\frac{105 r^{20}}{2}-5 r^{21}-45 r^{22}-30 r^{23}-\frac{15 r^{24}}{2}+18 r^{25}-10 r^{27}-\\
	&45 r^{28}-15 r^{29}-\frac{r^{30}}{2}-10 r^{33}-\frac{45 r^{34}}{2}-3 r^{35}-5 r^{39}-\frac{9 r^{40}}{2}-r^{45}   \\
	B_6(r)&= 3 r^5 \sqrt{1+4 r^5}-\frac{5}{2} r^6 \sqrt{1+4 r^5}+\frac{33}{2} r^{10} \sqrt{1+4 r^5}-10 r^{11} \sqrt{1+4 r^5}-\\
	&5 r^{12} \sqrt{1+4 r^5}+28 r^{15} \sqrt{1+4 r^5}-\frac{15}{2} r^{16} \sqrt{1+4 r^5}-20 r^{17} \sqrt{1+4 r^5}-\\
	&5 r^{18} \sqrt{1+4 r^5}+\frac{35}{2} r^{20} \sqrt{1+4 r^5}-15 r^{22} \sqrt{1+4 r^5}-20 r^{23} \sqrt{1+4 r^5}-\\
	&\frac{5}{2} r^{24} \sqrt{1+4 r^5}+3 r^{25} \sqrt{1+4 r^5}-15 r^{28} \sqrt{1+4 r^5}-10 r^{29} \sqrt{1+4 r^5}-\\
	&\frac{1}{2} r^{30} \sqrt{1+4 r^5}-\frac{15}{2} r^{34} \sqrt{1+4 r^5}-2 r^{35} \sqrt{1+4 r^5}-\frac{3}{2} r^{40} \sqrt{1+4 r^5}   
\end{aligned}
\en
\be
\begin{aligned}
	d_6(r)&=  1-6 r^5+15 r^6-45 r^{10}+80 r^{12}-110 r^{15}-180 r^{16}+150 r^{17}+230 r^{18}-\\
	&105 r^{20}-540 r^{21}-135 r^{22}+600 r^{23}+415 r^{24}-36	r^{25}-525 r^{26}-1030 r^{27}+\\
	&540 r^{28}+\1200 r^{29}+501 r^{30}-180 r^{31}-1050 r^{32}-\\
	&880 r^{33}+1620 r^{34}+1494 r^{35}+415 r^{36}-360 r^{37}-1050r^{38}-\\
	&140 r^{39}+2205 r^{40}+1230 r^{41}+255 r^{42}-360 r^{43}-525 r^{44}+390 r^{45}+1845 r^{46}+\\
	&660 r^{47}+180 r^{48}-180 r^{49}-105 r^{50}+410r^{51}+990 r^{52}+210 r^{53}+215 r^{54}-\\
	&36 r^{55}+220 r^{57}+315 r^{58}+30 r^{59}+252 r^{60}+70 r^{63}+45 r^{64}+210 r^{66}+10 r^{69}+\\
	&120 r^{72}+45	r^{78}+10 r^{84}+r^{90}  \\
	q_6(r)&=  1+2 r+3 r^2+4 r^3+5 r^4+10 r^6+22 r^7+36 r^8+52 r^9+25 r^{10}+45 r^{12}+110 r^{13}+\\
	&198 r^{14}+202 r^{15}+55 r^{16}+120 r^{18}+330 r^{19}+555r^{20}+394 r^{21}+70 r^{22}+210 r^{24}+\\
	&624 r^{25}+888 r^{26}+462 r^{27}+56 r^{28}+250 r^{30}+740 r^{31}+888 r^{32}+354 r^{33}+33 r^{34}+\\
	&200 r^{36}+540	r^{37}+600 r^{38}+192 r^{39}+33 r^{40}+100 r^{42}+240 r^{43}+285 r^{44}+100 r^{45}+\\
	&56 r^{46}+25 r^{48}+70 r^{49}+90 r^{50}+70 r^{51}+70 r^{52}+14	r^{55}+18 r^{56}+42 r^{57}+56 r^{58}+\\
	&3 r^{62}+14 r^{63}+28 r^{64}+2 r^{69}+8 r^{70}+r^{76}  
\end{aligned}
\en
\be
\begin{aligned}
	n_7(r)&=-1+\frac{7 r^6}{2}-9 r^7+\frac{63 r^{12}}{2}-21 r^{13}-\frac{45 r^{14}}{2}+\\
	&\frac{203 r^{18}}{2}-42 r^{19}-\frac{105 r^{20}}{2}-30 r^{21}+147
	r^{24}-\\
	&21 r^{25}-105 r^{26}-70 r^{27}-\frac{45 r^{28}}{2}+98 r^{30}-\frac{105 r^{32}}{2}-140 r^{33}-\\
	&\frac{105 r^{34}}{2}-9 r^{35}+\frac{49 r^{36}}{2}-70
	r^{39}-105 r^{40}-21 r^{41}-\frac{r^{42}}{2}-\\
	&\frac{105 r^{46}}{2}-42 r^{47}-\frac{7 r^{48}}{2}-21 r^{53}-7 r^{54}-\frac{7 r^{60}}{2}+\\
	&\frac{7}{2}r^6 \sqrt{1+4 r^6}-3 r^7 \sqrt{1+4 r^6}+\frac{49}{2} r^{12} \sqrt{1+4 r^6}-\\
	&15 r^{13} \sqrt{1+4 r^6}-\frac{15}{2} r^{14} \sqrt{1+4 r^6}+\frac{119}{2}
	r^{18} \\
	&\sqrt{1+4 r^6}-18 r^{19} \sqrt{1+4 r^6}-\frac{75}{2} r^{20} \sqrt{1+4 r^6}-10 r^{21} \sqrt{1+4 r^6}+\\
	&63 r^{24} \sqrt{1+4 r^6}-3 r^{25} \sqrt{1+4	r^6}-45 r^{26} \sqrt{1+4 r^6}-50 r^{27} \sqrt{1+4 r^6}-\\
	&\frac{15}{2} r^{28} \sqrt{1+4 r^6}+28 r^{30} \sqrt{1+4 r^6}-\frac{15}{2} r^{32} \sqrt{1+4	r^6}-\\
	&60 r^{33} \sqrt{1+4 r^6}-\frac{75}{2} r^{34} \sqrt{1+4 r^6}-3 r^{35} \sqrt{1+4 r^6}+\frac{7}{2} r^{36} \sqrt{1+4 r^6}-\\
	&10 r^{39} \sqrt{1+4 r^6}-45r^{40} \sqrt{1+4 r^6}-15 r^{41} \sqrt{1+4 r^6}-\frac{1}{2} r^{42} \sqrt{1+4 r^6}-\\
	&\frac{15}{2} r^{46} \sqrt{1+4 r^6}-18 r^{47} \sqrt{1+4 r^6}-\frac{5}{2}r^{48} \sqrt{1+4 r^6}-\\
	&3 r^{53} \sqrt{1+4 r^6}-3 r^{54} \sqrt{1+4 r^6}-\frac{1}{2} r^{60} \sqrt{1+4 r^6}
\end{aligned}
\en
\be
\begin{aligned}
	A_7(r)&=  -1+\frac{7 r^6}{2}-9 r^7+\frac{63 r^{12}}{2}-21 r^{13}-\frac{45 r^{14}}{2}+\frac{203 r^{18}}{2}-\\
	&42 r^{19}-\frac{105 r^{20}}{2}-30 r^{21}+147 r^{24}-21 r^{25}-105 r^{26}-70 r^{27}-\\
	&\frac{45 r^{28}}{2}+98 r^{30}-\frac{105 r^{32}}{2}-140 r^{33}-\frac{105 r^{34}}{2}-9 r^{35}+\\
	&\frac{49 r^{36}}{2}-70 r^{39}-105 r^{40}-21 r^{41}-\frac{r^{42}}{2}-\frac{105 r^{46}}{2}-\\
	&42 r^{47}-\frac{7 r^{48}}{2}-21 r^{53}-7 r^{54}-\frac{7 r^{60}}{2}  
\end{aligned}
\en
\be
\begin{aligned}
	B_7(r)&= \frac{7}{2} r^6 \sqrt{1+4 r^6}-3 r^7 \sqrt{1+4 r^6}+\frac{49}{2} r^{12} \sqrt{1+4 r^6}-\\
	&15 r^{13} \sqrt{1+4 r^6}-\frac{15}{2} r^{14} \sqrt{1+4 r^6}+\frac{119}{2} r^{18} \sqrt{1+4 r^6}-\\
	&18 r^{19} \sqrt{1+4 r^6}-\frac{75}{2} r^{20} \sqrt{1+4 r^6}-10 r^{21} \sqrt{1+4 r^6}+63 r^{24} \sqrt{1+4 r^6}-\\
	&3 r^{25} \sqrt{1+4 r^6}-45 r^{26} \sqrt{1+4 r^6}-50 r^{27} \sqrt{1+4 r^6}-\frac{15}{2} r^{28} \sqrt{1+4 r^6}+\\
	&28 r^{30} \sqrt{1+4 r^6}-\frac{15}{2} r^{32} \sqrt{1+4 r^6}-60 r^{33} \sqrt{1+4 r^6}-\frac{75}{2} r^{34} \sqrt{1+4 r^6}-\\
	&3 r^{35} \sqrt{1+4 r^6}+\frac{7}{2} r^{36} \sqrt{1+4 r^6}-10 r^{39} \sqrt{1+4 r^6}-45 r^{40} \sqrt{1+4 r^6}-\\
	&15 r^{41} \sqrt{1+4 r^6}-\frac{1}{2} r^{42} \sqrt{1+4 r^6}-\frac{15}{2} r^{46} \sqrt{1+4 r^6}-\\
	&18 r^{47} \sqrt{1+4 r^6}-\frac{5}{2} r^{48} \sqrt{1+4 r^6}-3 r^{53} \sqrt{1+4 r^6}-\\
	&3 r^{54} \sqrt{1+4 r^6}-\frac{1}{2} r^{60} \sqrt{1+4 r^6}   
\end{aligned}
\en
\be
\begin{aligned}
	d_7(r)&=  1-7 r^6+18 r^7-63 r^{12}+117 r^{14}-203 r^{18}-294 r^{19}+252 r^{20}+420 r^{21}-294 r^{24}-\\
	&1176 r^{25}-231 r^{26}+1260 r^{27}+975 r^{28}-196	r^{30}-1764 r^{31}-2688 r^{32}+1540 r^{33}+\\
	&3255 r^{34}+1578 r^{35}-49 r^{36}-1176 r^{37}-4410 r^{38}-2660 r^{39}+5775 r^{40}+5460 r^{41}+\\
	&1845 r^{42}-294r^{43}-2940 r^{44}-5880 r^{45}+315 r^{46}+10626 r^{47}+6461 r^{48}+1566 r^{49}-\\
	&735 r^{50}-3920 r^{51}-4410 r^{52}+4284 r^{53}+12873 r^{54}+5586 r^{55}+909r^{56}-980 r^{57}-\\
	&2940 r^{58}-1764 r^{59}+6265 r^{60}+11172 r^{61}+3570 r^{62}+200 r^{63}-735 r^{64}-\\
	&1176 r^{65}-294r^{66}+5586 r^{67}+7140 r^{68}+1680	r^{69}-378 r^{70}-294 r^{71}-196 r^{72}+\\
	&3570 r^{74}+3360 r^{75}+567 r^{76}-774 r^{77}-49 r^{78}+1680 r^{81}+1134 r^{82}+126 r^{83}-922 r^{84}+\\
	&567r^{88}+252 r^{89}+14 r^{90}-792 r^{91}+126 r^{95}+28 r^{96}-495 r^{98}+14 r^{102}-220 r^{105}-\\
	&66 r^{112}-12 r^{119}-r^{126}  
\end{aligned}
\en
{\be
\begin{aligned}
	q_7(r)&=  1+2 r+3 r^2+4 r^3+5 r^4+6 r^5+12 r^7+26 r^8+42 r^9+60 r^{10}+80 r^{11}+39 r^{12}+\\
	&66 r^{14}+156 r^{15}+273 r^{16}+420 r^{17}+397 r^{18}+116	r^{19}+220 r^{21}+572 r^{22}+\\
	&1092 r^{23}+1526 r^{24}+1036 r^{25}+209 r^{26}+495 r^{28}+1430 r^{29}+2807 r^{30}+3304 r^{31}+\\
	&1708 r^{32}+252 r^{33}+792	r^{35}+2525 r^{36}+4732 r^{37}+4711 r^{38}+1932 r^{39}+210 r^{40}+\\
	&922 r^{42}+3134 r^{43}+5377 r^{44}+4724 r^{45}+1537 r^{46}+114 r^{47}+780 r^{49}+2669	r^{50}+\\
	&4270 r^{51}+3422 r^{52}+820 r^{53}+6 r^{54}+465 r^{56}+1510 r^{57}+2471 r^{58}+1736 r^{59}+\\
	&193 r^{60}-106 r^{61}+180 r^{63}+555 r^{64}+1064r^{65}+532 r^{66}-154 r^{67}-208 r^{68}+\\
	&36 r^{70}+138 r^{71}+329 r^{72}+28 r^{73}-238 r^{74}-252 r^{75}+23 r^{78}+70 r^{79}-56 r^{80}-\\
	&168 r^{81}-210	r^{82}+10 r^{86}-24 r^{87}-72 r^{88}-120 r^{89}-3 r^{94}-18 r^{95}-45 r^{96}-\\
	&2 r^{102}-10 r^{103}-r^{110}\\
	&=  1+2 r+3 r^2+4 r^3+5 r^4+6 r^5+12 r^7+26 r^8+42 r^9+60 r^{10}+80 r^{11}+39 r^{12}+66 r^{14}+\\
	&154r^{15}(1-r^{52})+2r^15+208r^16(1-r^{52})+65r^{16}+238r^{17}(1-r^{57})+182r^{17}+\\
	&252r^{18}(1-r^{57})+145r^{18}+56r^{19}(1-r^{61})+60r^{19}+168r^{21}(1-r^{60})+52r^{21}+\\
	&210r^{22}(1-r^{60})+362r^{22}+24r^{23}(1-r^{64})+1068r^{23}+72r^{24}(1-r^{64})+1454r^{24}+\\
	&120r^{25}(1-r^{64})+916r^{25}+3r^{26}(1-r^{68})+206r^{26}+18r^{28}(1-r^{67})+477r^{28}+\\
	&45r^{29}(1-r^{67})+1385r^{29}+2r^{30}(1-r^{72})+2805r^{30}+10r^{31}(1-r^{72})+3294r^{31}+\\
	&r^{32}(1-r^{78})+1707r^{32}+106r^{33}(1-r^{28})+146r^33+\\
	&792	r^{35}+2525 r^{36}+4732 r^{37}+4711 r^{38}+1932 r^{39}+210 r^{40}+\\
	&922 r^{42}+3134 r^{43}+5377 r^{44}+4724 r^{45}+1537 r^{46}+114 r^{47}+780 r^{49}+2669	r^{50}+\\
	&4270 r^{51}+3422 r^{52}+820 r^{53}+6 r^{54}+465 r^{56}+1510 r^{57}+2471 r^{58}+1736 r^{59}+\\
	&193 r^{60}+180 r^{63}+555 r^{64}+1064r^{65}+532 r^{66}+36 r^{70}+138 r^{71}+329 r^{72}+28 r^{73}+\\
	&23r^{78}+70r^{79}+10r^{86}
\end{aligned}
\en}
\be
\begin{aligned}
	n_8(r)&=  -1+4 r^7-\frac{21 r^8}{2}+42 r^{14}-28 r^{15}-\frac{63 r^{16}}{2}+168 r^{21}-70 r^{22}-\\
	&84 r^{23}-\frac{105 r^{24}}{2}+329 r^{28}-56 r^{29}-210	r^{30}-140 r^{31}-\frac{105 r^{32}}{2}+\\
	&336 r^{35}-7 r^{36}-168 r^{37}-350 r^{38}-140 r^{39}-\frac{63 r^{40}}{2}+168 r^{42}-21 r^{44}-\\
	&280 r^{45}-350	r^{46}-84 r^{47}-\frac{21 r^{48}}{2}+32 r^{49}-35 r^{52}-280 r^{53}-210 r^{54}-\\
	&28 r^{55}-\frac{r^{56}}{2}-35 r^{60}-168 r^{61}-70 r^{62}-4 r^{63}-21	r^{68}-56 r^{69}-10 r^{70}-\\
	&7 r^{76}-8 r^{77}-r^{84}+4 r^7 \sqrt{1+4 r^7}-\frac{7}{2} r^8 \sqrt{1+4 r^7}+34 r^{14} \sqrt{1+4 r^7}-\\
	&21 r^{15} \sqrt{1+4r^7}-\frac{21}{2} r^{16} \sqrt{1+4 r^7}+108 r^{21} \sqrt{1+4 r^7}-\\
	&35 r^{22} \sqrt{1+4 r^7}-63 r^{23} \sqrt{1+4 r^7}-\frac{35}{2} r^{24} \sqrt{1+4
		r^7}+\\
	&165 r^{28} \sqrt{1+4 r^7}-14 r^{29} \sqrt{1+4 r^7}-105 r^{30} \sqrt{1+4 r^7}-105 r^{31} \sqrt{1+4 r^7}-\\
	&\frac{35}{2} r^{32} \sqrt{1+4 r^7}+126	r^{35} \sqrt{1+4 r^7}-42 r^{37} \sqrt{1+4 r^7}-\\
	&175 r^{38} \sqrt{1+4 r^7}-105 r^{39} \sqrt{1+4 r^7}-\frac{21}{2} r^{40} \sqrt{1+4 r^7}+\\
	&42 r^{42} \sqrt{1+4r^7}-70 r^{45} \sqrt{1+4 r^7}-175 r^{46} \sqrt{1+4 r^7}-63 r^{47} \sqrt{1+4 r^7}-\\
	&\frac{7}{2} r^{48} \sqrt{1+4 r^7}+4 r^{49} \sqrt{1+4 r^7}-70 r^{53}	\sqrt{1+4 r^7}-\\
	&105 r^{54} \sqrt{1+4 r^7}-21 r^{55} \sqrt{1+4 r^7}-\frac{1}{2} r^{56} \sqrt{1+4 r^7}-\\
	&42 r^{61} \sqrt{1+4 r^7}-35 r^{62} \sqrt{1+4	r^7}-3 r^{63} \sqrt{1+4 r^7}-14 r^{69} \sqrt{1+4 r^7}-\\
	&5 r^{70} \sqrt{1+4 r^7}-2 r^{77} \sqrt{1+4 r^7} 
\end{aligned}
\en
\be
\begin{aligned}
	A_8(r)&= -1+4 r^7-\frac{21 r^8}{2}+42 r^{14}-28 r^{15}-\frac{63 r^{16}}{2}+168 r^{21}-70 r^{22}-\\
	&84 r^{23}-\frac{105 r^{24}}{2}+329 r^{28}-56 r^{29}-210r^{30}-140 r^{31}-\frac{105 r^{32}}{2}+\\
	&336 r^{35}-7 r^{36}-168 r^{37}-350 r^{38}-140 r^{39}-\frac{63 r^{40}}{2}+168 r^{42}-21 r^{44}-\\
	&280 r^{45}-350	r^{46}-84 r^{47}-\frac{21 r^{48}}{2}+32 r^{49}-35 r^{52}-280 r^{53}-210 r^{54}-\\
	&28 r^{55}-\frac{r^{56}}{2}-35 r^{60}-168 r^{61}-70 r^{62}-4 r^{63}-21r^{68}-56 r^{69}-10 r^{70}-\\
	&7 r^{76}-8 r^{77}-r^{84}   
\end{aligned}
\en
\be
\begin{aligned}
	B_8(r)&= 4 r^7 \sqrt{1+4 r^7}-\frac{7}{2} r^8 \sqrt{1+4 r^7}+34 r^{14} \sqrt{1+4 r^7}-21 r^{15} \sqrt{1+4 r^7}-\\
	&\frac{21}{2} r^{16} \sqrt{1+4 r^7}+108 r^{21} \sqrt{1+4 r^7}-35 r^{22} \sqrt{1+4 r^7}-\\
	&63 r^{23} \sqrt{1+4 r^7}-\frac{35}{2} r^{24} \sqrt{1+4 r^7}+165 r^{28} \sqrt{1+4 r^7}-\\
	&14 r^{29} \sqrt{1+4 r^7}-105 r^{30} \sqrt{1+4 r^7}-105 r^{31} \sqrt{1+4 r^7}-\\
	&\frac{35}{2} r^{32} \sqrt{1+4 r^7}+126 r^{35} \sqrt{1+4 r^7}-42 r^{37} \sqrt{1+4 r^7}-\\
	&175 r^{38} \sqrt{1+4 r^7}-105 r^{39} \sqrt{1+4 r^7}-\frac{21}{2} r^{40} \sqrt{1+4 r^7}+\\
	&42 r^{42} \sqrt{1+4 r^7}-70 r^{45} \sqrt{1+4 r^7}-175 r^{46} \sqrt{1+4 r^7}-63 r^{47} \sqrt{1+4 r^7}-\\
	&\frac{7}{2} r^{48} \sqrt{1+4 r^7}+4 r^{49} \sqrt{1+4 r^7}-70 r^{53} \sqrt{1+4 r^7}-105 r^{54} \sqrt{1+4 r^7}-\\
	&21 r^{55} \sqrt{1+4 r^7}-\frac{1}{2} r^{56} \sqrt{1+4 r^7}-42 r^{61} \sqrt{1+4 r^7}-35 r^{62} \sqrt{1+4 r^7}-\\
	&3 r^{63} \sqrt{1+4 r^7}-14 r^{69} \sqrt{1+4 r^7}-5 r^{70} \sqrt{1+4 r^7}-2 r^{77} \sqrt{1+4 r^7}   
\end{aligned}
\en
\be
\begin{aligned}
	d_8(r)&= 1-8 r^7+21 r^8-84 r^{14}+161 r^{16}-336 r^{21}-448 r^{22}+392 r^{23}+693 r^{24}-658 r^{28}-\\
	&2240 r^{29}-364 r^{30}+2352 r^{31}+1967 r^{32}-672	r^{35}-4592 r^{36}-5936 r^{37}+3640 r^{38}+\\
	&7448 r^{39}+3983 r^{40}-336 r^{42}-4704 r^{43}-13678 r^{44}-6496 r^{45}+16380 r^{46}+15680 r^{47}+\\
	&5999r^{48}-64 r^{49}-2352 r^{50}-14112 r^{51}-22372 r^{52}+3696 r^{53}+37856 r^{54}+23912 r^{55}+\\
	&6861 r^{56}-448 r^{57}-7056 r^{58}-23520 r^{59}-21098r^{60}+24640 r^{61}+59332 r^{62}+27432 r^{63}+\\
	&5999 r^{64}-1344 r^{65}-11760 r^{66}-23520 r^{67}-9856 r^{68}+45584 r^{69}+68516 r^{70}+23968 r^{71}+\\
	&4032r^{72}-2240 r^{73}-11760 r^{74}-14112 r^{75}+1386 r^{76}+54544 r^{77}+59920 r^{78}+15848 r^{79}+\\
	&2261 r^{80}-2240 r^{81}-7056 r^{82}-4704 r^{83}+6202r^{84}+47936 r^{85}+39620 r^{86}+7728 r^{87}+\\
	&1624 r^{88}-1344 r^{89}-2352 r^{90}-672 r^{91}+5992 r^{92}+31696 r^{93}+19320 r^{94}+2632 r^{95}+\\
	&2121r^{96}-448 r^{97}-336 r^{98}+3962 r^{100}+15456 r^{101}+6580 r^{102}+560 r^{103}+3010 r^{104}-\\
	&64 r^{105}+1932 r^{108}+5264 r^{109}+1400 r^{110}+56r^{111}+3432 r^{112}+658 r^{116}+1120 r^{117}+\\
	&140 r^{118}+3003 r^{120}+140 r^{124}+112 r^{125}+2002 r^{128}+14 r^{132}+1001 r^{136}+364 r^{144}+\\
	&91	r^{152}+14 r^{160}+r^{168}   
\end{aligned}
\en
\be
\begin{aligned}
	q_8(r)&= 1+2 r+3 r^2+4 r^3+5 r^4+6 r^5+7 r^6+14 r^8+30 r^9+48 r^{10}+68 r^{11}+90 r^{12}+114 r^{13}+\\
	&56 r^{14}+91 r^{16}+210 r^{17}+360 r^{18}+544r^{19}+765 r^{20}+690 r^{21}+210 r^{22}+364 r^{24}+\\
	&910 r^{25}+1680 r^{26}+2720 r^{27}+3422 r^{28}+2258 r^{29}+490 r^{30}+1001 r^{32}+2730 r^{33}+\\
	&5460r^{34}+8848 r^{35}+9364 r^{36}+4804 r^{37}+791 r^{38}+2002 r^{40}+6006 r^{41}+12768 r^{42}+\\
	&19376 r^{43}+17402 r^{44}+7244 r^{45}+924 r^{46}+3003r^{48}+9946 r^{49}+21544 r^{50}+29824 r^{51}+\\
	&23488 r^{52}+8052 r^{53}+792 r^{54}+3430 r^{56}+12418 r^{57}+26234 r^{58}+33720 r^{59}+23900 r^{60}+\\
	&6672	r^{61}+502 r^{62}+2989 r^{64}+11494 r^{65}+23212 r^{66}+29096 r^{67}+18552 r^{68}+4128 r^{69}+\\
	&276 r^{70}+1960 r^{72}+7658 r^{73}+15176 r^{74}+19600r^{75}+10944 r^{76}+2024 r^{77}+276 r^{78}+\\
	&931 r^{80}+3542 r^{81}+7504 r^{82}+10304 ^{83}+4942 r^{84}+1108 r^{85}+502 r^{86}+294 r^{88}+\\
	&1106 r^{89}+2828r^{90}+4144 r^{91}+1876 r^{92}+980 r^{93}+792 r^{94}+49 r^{96}+238 r^{97}+\\
	&784 r^{98}+1232 r^{99}+770 r^{100}+924 r^{101}+924 r^{102}+34 r^{105}+152r^{106}+256 r^{107}+\\
	&360 r^{108}+660 r^{109}+792 r^{110}+19 r^{114}+36 r^{115}+135 r^{116}+330 r^{117}+495 r^{118}+\\
	&4 r^{123}+30 r^{124}+110 r^{125}+220r^{126}+3 r^{132}+22 r^{133}+66 r^{134}+2 r^{141}+12r^{142}+r^{150}   
\end{aligned}
\en

\begin{bibdiv}
	\begin{biblist}
		\bibselect{TreeShifts}
	\end{biblist}
\end{bibdiv}

 \end{document}